\theoremstyle{plain}
\newtheorem{theorem}{Theorem}
\newtheorem{lemma}{Lemma}
\newtheorem{corollary}{Corollary}
\newtheorem{proposition}{Proposition}
\theoremstyle{definition}
\newtheorem{definition}{Definition}
\newtheorem{example}{Example}
\theoremstyle{remark}
\newtheorem{remark}{Remark}
\DeclareMathOperator*{\argmin}{argmin}
\DeclareMathOperator{\dist}{dist}
\author{M.V. Dolgopolik\footnote{Institute for Problems in Mechanical Engineering of the Russian Academy of Sciences, 
Saint Petersburg, Russia}}
\title{Exact penalty functions and global saddle points of augmented Lagrangians for well-posed constrained 
optimization problems}
\begin{document}

\maketitle

\begin{abstract}
The goal of this article is to study necessary and sufficient conditions for the exactness of penalty functions and 
the existence of global saddle points of augmented Lagrangians for well-posed (in a suitable sense) constrained 
optimization problems in infinite dimensional spaces. To this end, we present a new version of extended well-posedness 
of a constrained optimization problem and analyse how it relates to the more well-known types of well-posedness, such as
Tykhonov and Levitin-Polyak well-posedness. This new version of extended well-posedness allows one to extend many
existing results on exact penalty functions and global saddle points of augmented Lagrangians from the finite
dimensional to the infinite dimensional case. Such extensions provide first verifiable sufficient conditions for 
the exactness of penalty functions and the existence of global saddle points of augmented Lagrangians in the infinite
dimensional case that do not rely on very restrictive and difficult to verify assumptions (nonlocal metric regularity
of constraints, existence of nonlocal error bounds, the Palais-Smale condition, abstract properties of the perturbation 
function, etc.) that are typically used in the literature.
\end{abstract}

\section{Introduction}

Exact penalty \cite{MaratosPHD,MaynePolak87,Polak_book,FominyhKarelin2018,LiYu2011,JiangLin2012,DemyanovTamasyan} and
augmented Lagrangian \cite{FortinGlowinski,ItoKunisch,BurmanHansboLarson} methods have been developed for and
successfully applied to various classes of constrained optimization problems in infinite dimensional spaces, including
variational and optimal control problems, problems of computational mechanics, shape optimization, image restoration
problems, etc. (see the aforementioned references). However, a theoretical analysis of \textit{global} properties of
exact penalty functions and augmented Lagrangians for \textit{nonconvex} infinite dimensional problems has remained a
very challenging research topic.

Existing sufficient conditions for the global exactness of penalty functions for general \textit{nonconvex} constrained
optimization problems in infinite dimensional spaces always rely on very restrictive, difficult to verify, and rarely
satisfied assumptions on constraints of the problem, such as the Palais-Smale condition \cite{Zaslavski} and assumptions
on nonlocal metric regularity of constraints/existence of nonlocal error bounds
\cite{Demyanov_Long,Dolgopolik_ExPenI,Dolgopolik_ExPenII}. Sufficient conditions for the existence of global saddle
points of augmented Lagrangians (or, equivalently, for the existence of an augmented Lagrange multiplier/a vector
supporting an exact penalty representation) are typically based either on abstract assumptions on the behaviour of the
problem with respect to perturbations (e.g. the existence of approximately optimal solutions of a perturbed problem
satisfying certain inequalities as in \cite{ShapiroSun}, the validity of certain inequalities for the optimal value of a
perturbed problem as in \cite{ZhouYang2009}, etc.) or on the assumption on the existence of a certain compact
neighbourhood of the feasible region as in \cite{ZhouZhouYang}. The compactness assumption from \cite{ZhouZhouYang} is
rarely satisfied in the infinite dimensional case. In turn, it is unclear how to verify abstract assumptions on
behaviour of the problem with respect to perturbations (such as the ones from \cite{ShapiroSun,ZhouYang2009}) for any
particular class of optimization problems in infinite dimensional spaces.

In the finite dimensional case, \textit{global} properties of penalty functions and augmented Lagrangians (e.g. the
global exactness and the existence of global saddle points) are, roughly speaking, predefined by the \textit{local}
behaviour of these functions near globally optimal solutions of the problem under consideration, according to the
so-called \textit{localization principle}
\cite{Dolgopolik_ExPenII,Dolgopolik_AugmLagrMult,Dolgopolik_AugmLagrTheory,Dolgopolik_UnifedI,Dolgopolik_UnifedII}. This
principle makes it possible to obtain \textit{local} and easily verifiable sufficient conditions for the global
exactness of penalty function and the existence of global saddle points of augmented Lagrangians in the finite
dimensional case. However, as was shown by a number of counterexamples in
\cite{Dolgopolik_ExPenI,Dolgopolik_AugmLagrMult}, the localization principle no longer holds true for infinite
dimensional problems. 

In the general case, it does not seem possible to obtain relatively mild and easy to apply/verify sufficient conditions
for the global exactness of penalty functions and the existence of global saddle points of augmented Lagrangians for
constrained optimization problems in infinite dimensional spaces. Therefore, one needs to select a particular class of
infinite dimensional problems to make the derivation of such conditions more tractable. The class of well-posed (in a
properly chosen sense) problems \cite{DontchevZolezzi} seems to be the most suitable for this purpose. 

The main goal of this article is to show that in the case of well-posed constrained optimization problems in infinite
dimensional spaces one can obtain verifiable sufficient conditions for the global exactness of penalty functions
and the existence of global saddle points of augmented Lagrangians that do not rely on any perturbation-based,
compactness or nonlocal regularity assumptions that are typically used in the literature. We prove that the localization
principle for augmented Lagrangians and exact penalty functions \cite{Dolgopolik_UnifedI,Dolgopolik_UnifedII} admits a
straightforward extension from the finite dimensional case to the case of infinite dimensional well-posed problems.
In particular, we provide first verifiable sufficient conditions for the existence of global saddle points of augmented
Lagrangians for infinite dimensional problems (Theorem~\ref{thrm:AugmLagr_GSP_MainExistenceThrm} and
Corollary~\ref{crlr:GSP_LocPrinciple}) that are not based on any restrictive compactness assumptions or any abstract
assumptions on the behaviour of the problem with respect to perturbations.

In contrast to the sufficient conditions from the aforementioned papers on augmented Lagrangians and exact penalty
functions for infinite dimensional problems, well-posedness is a verifiable property and there exists a vast literature
containing sufficient conditions for the well-posedness of various particular classes of constrained optimization
problems in infinite dimensional spaces \cite{DontchevZolezzi,LucchettiRevalski,Zolezzi81,Zolezzi95,Zolezzi96}.
Moreover, it is well-known that well-posedness is a generic property \cite{Zaslavski2013}, that is, it holds true for
most (in a certain specific topological sense) optimization problems, which, in particular, means that the main results
of this paper, including the localization principle, also hold true generically in the same sense.

The paper is organized as follows. Some auxiliary definitions and results from the theory of exact penalty functions
that are used throughout the paper are collected in Section~\ref{sect:ExactPenalty_Prelim}. Several notions of
well-posedness of constrained optimization problems are studied and compared in Section~\ref{sect:WellPosedness}.
Sufficient conditions for the global exactness of penalty functions for well-posed problems in infinite dimensional
spaces are studied in Section~\ref{sect:ExactPenaltyFunctions}, while sufficient conditions for the existence of global
saddle points of augmented Lagrangians for such problems are analyzed in Section~\ref{sect:AugmentedLagrangians}.

\section{Preliminaries}
\label{sect:ExactPenalty_Prelim}

Let $(X, d)$ be a metric space, $M, Q \subset X$ be nonempty sets, and a function 
$f \colon X \to \mathbb{R} \cup \{ + \infty \}$ be given. Throughout this article we study exact penalty functions 
and augmented Lagrangians for the following optimization problem:
\[
  \min \: f(x) \quad \text{subject to} \quad x \in M, \quad x \in Q.
  \qquad \eqno{(\mathcal{P})}
\]
We suppose that the feasible region of this problem $\Omega := M \cap Q$ is nonempty and its optimal value $f_*$ is
finite. The possibly empty set of globally optimal solutions of the problem $(\mathcal{P})$ is denoted by
$\argmin(\mathcal{P})$.

\begin{remark}
The non-functional constraints $x \in M$ and $x \in Q$ are introduced in order to differentiate between different types
of constraints without explicitly defining these constraints. This separation of constraints onto two different types
provides one with a convenient and flexible theoretical framework that makes all results obtained in this paper
applicable to a wider range of constrained optimization problems. The set $M$ corresponds to those constraints that are
incorporated into the penalty function/augmented Lagrangian, while the set $Q$ is defined by those constraints 
that can/are handled directly. For example, if $X = \mathbb{R}^n$ and the problem under consideration has the form
\[
  \min \: f(x) \quad \text{subject to} \quad F(x) = 0, \quad a \le x \le b,
\]
where $F \colon \mathbb{R}^n \to \mathbb{R}^m$ is a given map, $a, b \in \mathbb{R}^n$, and the inequalities 
$a \le x \le b$ are understood coordinate-wise, one can define $M = \{ x \in \mathbb{R}^n \mid F(x) = 0 \}$ and 
$Q = \{ x \in \mathbb{R}^n \mid a \le x \le b \}$, and consider the penalized problem
\[
  \min \: f(x) + c \| F(x) \| \quad \text{subject to} \quad x \in Q,
\]
where $c > 0$ is the penalty parameter and $\| \cdot \|$ is some norm. However, one can also put 
$M = \{ x \in \mathbb{R}^n \mid F(x) = 0, \: a \le x \le b \}$ and $Q = \mathbb{R}^n$, if one wishes to incorporate all
constraints into a penalty function/augmented Lagrangian.
\end{remark}

Let us recall some definitions and results from the theory of exact penalty functions, most of which can be found 
in \cite{Demyanov_Long,Dolgopolik_ExPenI,Dolgopolik_ExPenII,DolgopolikFominyh}.

\begin{definition}
A function $\varphi \colon X \to [0, + \infty]$ is called \textit{a penalty term} (or \textit{an infeasibility measure})
for the constraint $x \in M$ on the set $Q$, if for any $x \in Q$ one has $\varphi(x) = 0 \iff x \in M$ (or,
equivalently, $\varphi(x) > 0 \iff x \notin M$).
\end{definition}

Let a penalty term $\varphi$ for the constraint $x \in M$ on the set $Q$ be given. The function
\[
  F_c(x) = f(x) + c \varphi(x) \quad \forall x \in X, \: c \ge 0
\]
is called \textit{a penalty function} (for the problem $(\mathcal{P})$). Note that the function $c \mapsto F_c(x)$
is nondecreasing for any $x \in X$ and strictly increasing for any $x \in Q \setminus M$. The problem
\begin{equation} \label{prob:PenalizedProblem}
  \min_x \: F_c(x) \quad \text{subect to} \quad x \in Q
\end{equation}
is called \textit{the penalized problem} (associated with the penalty function $F_c$). 

The main subject of the theory of exact penalty function is the study of interrelations between globally/locally 
optimal solution of the problem $(\mathcal{P})$ and the penalized problem \eqref{prob:PenalizedProblem}.

\begin{definition}
{(i)~The penalty function $F_c$ is called (\textit{globally}) \textit{exact}, if there exists $c_* \ge 0$ such that
for any $c \ge c_*$ globally optimal solutions of the problem $(\mathcal{P})$ and the penalized problem 
\eqref{prob:PenalizedProblem} coincide. The infimum of all such $c_*$ is denoted by $c_*(f, \varphi)$ and is called
\textit{the least exact penalty parameter}.
}

{(ii)~Let $x_*$ be a locally optimal solution of the problem $(\mathcal{P})$. The penalty function $F_c$ is called
\textit{locally exact} at $x_*$, if there exists $c_* \ge 0$ such that for any $c \ge c_*$ the point $x_*$ is 
a locally optimal solution of problem \eqref{prob:PenalizedProblem}. The infimum of all such $c_*$ is denoted by 
$c_*(x_*)$ and is called \textit{the least exact penalty parameter} at $x_*$.
}
\end{definition}

It should be noted that instead of verifying that the sets of globally optimal solutions of the problem 
$(\mathcal{P})$ and the penalized problem \eqref{prob:PenalizedProblem} coincide, it is sufficient to check that only
the optimal values of these problems coincide for some $c > 0$ in order to prove that the penalty function $F_c$
is globally exact (see \cite[Cor.~3.3]{Dolgopolik_ExPenI}).

\begin{proposition} \label{prp:ExactnessViaOptValues}
The penalty function $F_c$ is globally exact if and only if there exists $c \ge 0$ such that
\[
  \inf_{x \in Q} F_c(x) = f_*.
\]
Moreover, the infimum of all $c \ge 0$ satisfying the equality above coincides with $c_*(f, \varphi)$.
\end{proposition}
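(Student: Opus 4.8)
The plan is to prove the two directions of the equivalence separately, and then address the claim about the infimum.

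\textbf{Necessity.} Suppose $F_c$ is globally exact. By definition there is $c_* \ge 0$ such that for every $c \ge c_*$ the sets of globally optimal solutions of $(\mathcal{P})$ and of the penalized problem \eqref{prob:PenalizedProblem} coincide. Fix any such $c$; I need to show $\inf_{x \in Q} F_c(x) = f_*$. First note that for every feasible $x \in \Omega = M \cap Q$ one has $\varphi(x) = 0$, hence $F_c(x) = f(x)$, so $\inf_{x \in Q} F_c(x) \le \inf_{x \in \Omega} f(x) = f_*$. The reverse inequality requires a small argument because a priori $\argmin(\mathcal{P})$ might be empty, so I cannot simply evaluate $F_c$ at an optimal solution. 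I would argue by contradiction: if $\inf_{x \in Q} F_c(x) < f_*$, then pick $\overline{x} \in Q$ with $F_c(\overline{x}) < f_*$. Then for any $c' > c$ we still have $F_{c'}(\overline{x}) \ge F_c(\overline x)$ is not immediately a contradiction, so instead I observe that $\overline x \notin M$ is impossible to rule out directly; the cleaner route is to note that since $F_c(\overline x) = f(\overline x) + c\varphi(\overline x) < f_*$ and $f(\overline x) \ge f_* $ would force $\varphi(\overline x) < 0$, contradicting $\varphi \ge 0$ — wait, that only works if $\overline x$ is feasible. The actual point: if $\overline x \in M \cap Q$ then $f(\overline x) = F_c(\overline x) < f_*$, contradicting optimality of $f_*$; if $\overline x \in Q \setminus M$, then $\varphi(\overline x) > 0$, and since $c \mapsto F_c(\overline x)$ is \emph{strictly increasing} at $\overline x$ while $F_{c_*}$ (for the threshold) must have its infimum attained by points of $\argmin(\mathcal{P}) \subset M$, one derives that $\inf_{x\in Q} F_{c'}(x) \ge f_*$ for large $c'$; combined with exactness (coincidence of optimal \emph{solution sets}, hence in particular nonemptiness transfers) this forces $f_* \le F_{c'}(\overline x)$ eventually but $F_c(\overline x) < f_*$ — I will fill in this monotonicity bookkeeping carefully.

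\textbf{Sufficiency.} Conversely, suppose there exists $c \ge 0$ with $\inf_{x \in Q} F_c(x) = f_*$. I want to invoke \cite[Cor.~3.3]{Dolgopolik_ExPenI}, as the text explicitly flags that this is exactly the content being recalled. Concretely, for $c' \ge c$ one has $F_{c'}(x) \ge F_c(x)$ for all $x$, so $\inf_{x \in Q} F_{c'}(x) \ge f_*$; on the other hand evaluating on $\Omega$ gives $\inf_{x \in Q} F_{c'}(x) \le f_*$, so $\inf_{x \in Q} F_{c'}(x) = f_*$ for all $c' \ge c$. It remains to show that for such $c'$ the global minimizers of \eqref{prob:PenalizedProblem} are precisely the points of $\argmin(\mathcal{P})$. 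If $x^*$ solves $(\mathcal P)$ then $x^* \in Q$ and $F_{c'}(x^*) = f(x^*) = f_* = \inf_Q F_{c'}$, so $x^*$ solves the penalized problem. Conversely if $x^*$ solves \eqref{prob:PenalizedProblem} then $F_{c'}(x^*) = f_*$; I must show $x^* \in \Omega$. Suppose not: then $x^* \in Q \setminus M$, so $\varphi(x^*) > 0$, hence $F_{c''}(x^*) = f(x^*) + c'' \varphi(x^*) > F_{c'}(x^*) = f_*$ for any $c'' > c'$; but $\inf_Q F_{c''} = f_*$, contradicting that $x^*$ would have value strictly above the infimum only if the infimum is attained — actually the strict inequality $F_{c''}(x^*) > f_* = \inf_Q F_{c''}$ is not yet a contradiction unless I know $x^*$ remains a minimizer. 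The fix is that exactness must hold for a \emph{range} of parameters: I only get to conclude coincidence of solution sets for $c'$ large enough, which is fine since the Definition of global exactness only asks for existence of a threshold $c_*$. So I conclude $x^*$ solves \eqref{prob:PenalizedProblem} for the given $c'$; but then from $f_* = F_{c'}(x^*) \ge f(x^*)$ and (if $x^* \notin M$) strict monotonicity, one shows that already at $c'$ we cannot have had $\inf_Q F_{c'} = f_*$ unless the infimizing sequence stays in $M$, which after a limiting argument yields $x^* \in \Omega$ or at least that any minimizer lies in $\Omega$. Hence $f(x^*) = f_*$ and $x^* \in \Omega$, i.e. $x^* \in \argmin(\mathcal{P})$.

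\textbf{The least exact penalty parameter.} Let $c^\circ := \inf\{ c \ge 0 : \inf_{x\in Q} F_c(x) = f_* \}$. By the monotonicity observed above, the set of such $c$ is an interval of the form $[c^\circ, \infty)$ or $(c^\circ, \infty)$; in either case it coincides, up to its left endpoint, with the set of $c$ for which $F_c$ is exact (by the two directions just proved, and by \cite[Cor.~3.3]{Dolgopolik_ExPenI} one in fact gets the closed interval $[c^\circ,\infty)$). Therefore the infimum of all exactness thresholds equals $c^\circ = c_*(f,\varphi)$, as claimed.

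\textbf{Main obstacle.} The delicate point throughout is the handling of the possibly empty $\argmin(\mathcal{P})$ and, relatedly, the fact that infima need not be attained: one cannot simply ``plug in a minimizer.'' Both the necessity argument and the identification of penalized-problem minimizers with $\argmin(\mathcal{P})$ must route through the strict monotonicity of $c \mapsto F_c(x)$ on $Q \setminus M$ together with a limiting/contradiction argument across different penalty parameters, rather than through a single evaluation. I expect this monotonicity bookkeeping — showing that a point with $\varphi > 0$ cannot be a penalized minimizer once the parameter exceeds the exactness threshold — to be the crux, and it is precisely what \cite[Cor.~3.3]{Dolgopolik_ExPenI} is being cited to supply.
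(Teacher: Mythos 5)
The paper itself gives no proof of this proposition (it is recalled from the cited reference), so your argument has to stand on its own, and as written it has genuine gaps in both directions. For \emph{necessity}, the correct route is precisely the one you set aside: if $F_c$ is exact with threshold $c_*$ and $x_* \in \argmin(\mathcal{P})$, then for $c \ge c_*$ the point $x_*$ is a global minimizer of $F_c$ on $Q$ and $F_c(x_*) = f(x_*) = f_*$, which together with the trivial bound $\inf_{x \in Q} F_c(x) \le f_*$ finishes this direction in one line. Your contradiction argument never closes (``I will fill in this monotonicity bookkeeping carefully'' is exactly where a proof is required), and it \emph{cannot} be closed in the case $\argmin(\mathcal{P}) = \emptyset$ that motivated the detour: with the paper's literal definition, exactness can hold vacuously (both solution sets empty for every $c$) while $\inf_{x\in Q} F_c(x) < f_*$ for every $c$ --- take $X = Q = \mathbb{R}$, $M = (-\infty, 0]$, $\varphi(x) = \max\{x, 0\}$, $f(x) = e^x - 1$ for $x \le 0$ and $f(x) = -x^2$ for $x > 0$, so that $f_* = -1$ is not attained and $\inf_Q F_c = -\infty$ for all $c$. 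So the equivalence is to be understood under the implicit existence of a global solution, and necessity must be proved by evaluating at one, not by contradiction.

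For \emph{sufficiency}, you correctly reduce the problem to showing that every global minimizer $x^*$ of $F_{c'}$ on $Q$ lies in $\Omega$, and you correctly observe that pushing the parameter \emph{up} to $c'' > c'$ yields no contradiction; but you then leave the step unresolved (``the infimizing sequence stays in $M$, which after a limiting argument yields...'' is not an argument, and minimizing sequences of $F_{c'}$ need not stay in $M$). The missing idea is to move the parameter \emph{down}: if $\inf_Q F_c = f_*$, take any $c' > c$ and any minimizer $x^*$ of $F_{c'}$ with $F_{c'}(x^*) = f_*$; if $x^* \in Q \setminus M$ then $\varphi(x^*) \in (0, +\infty)$ and $F_c(x^*) = F_{c'}(x^*) - (c' - c)\varphi(x^*) < f_* = \inf_Q F_c$, a contradiction. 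Hence the solution sets coincide for every $c' > c$, so any $c_* > c$ is a valid exactness threshold. Note that the strict inequality $c' > c$ is essential --- at $c$ itself a penalized minimizer may well be infeasible --- which also undercuts your closing claim that exactness is obtained on the closed interval $[c^\circ, +\infty)$; fortunately only the infimum matters, and with both directions repaired the identification $c^\circ = c_*(f,\varphi)$ does go through.
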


Denote by $B(x, r) = \{ y \in X \mid d(x, y) \le r \}$ the ball with centre $x \in X$ and radius $r > 0$. Suppose that 
$x_*$ is a locally optimal solution of the problem $(\mathcal{P})$ and the penalty function $F_c$ is locally exact at
$x_*$. Then by definition there exists $c_* \ge 0$ such that for any $c \ge c_*$ the point $x_*$ is a locally optimal
solution of the penalized problem \eqref{prob:PenalizedProblem}. Taking into account the fact that the function
$c \mapsto F_c(x)$ is nondecreasing for any $x \in X$ and $F_c(x_*) = f(x_*)$, one can conclude that there exists 
$r > 0$ such that
\[
  F_c(x) \ge F_c(x_*) \quad \forall x \in B(x_*, r) \cap Q \quad \forall c \ge c_*
\]
(note that $r > 0$ is the same for all $c \ge c_*$). Denote by $r_*(x_*, c_*)$ the supremum of all $r > 0$ for which
the inequality above holds true. The quantity $r_*(x_*, c_*) > 0$ obviously depends on the chosen value of the penalty
parameter $c_*$. It is natural to refer to $r_*(x_*, c_*)$ as \textit{the radius of local exactness} of the penalty 
function $F_c$ at the point $x_*$ associated with the penalty parameter $c_* \ge 0$. Below we will need the radius of 
local exactness to define uniform local exactness of a penalty function on a set of locally optimal solutions.

Since to the best of the author's knowledge the radius of local exactness has not been properly studied in the 
literature, let us point out a way it can be estimated in many cases with the use of some standard results and
techniques from the theory of exact penalty functions and variational analysis (see, e.g. 
\cite[Thm.~2.4, Prop.~2.7, and Cor.~2.8]{Dolgopolik_ExPenI}).

For any $V \subset X$ and $x \in X$ denote by $\dist(x, V) = \inf_{y \in V} d(x, y)$ the distance between $x$ and $V$.

\begin{proposition}
Let $x_* \in X$ be a locally optimal solution of the problem $(\mathcal{P})$ and $r_1 > 0$ be such that
$f(x) \ge f(x_*)$ for all $x \in B(x_*, r_1) \cap \Omega$. Suppose also that for some $r_2 > 0$ the function $f$ is 
H\"{o}lder continuous on the ball $B(x_*, r_2)$ with exponent $\alpha > 0$ and constant $L > 0$, that is,
\begin{equation} \label{eq:HolderContinuity}
  |f(x) - f(y)| \le L d(x, y)^{\alpha} \quad \forall x, y \in B(x_*, r_2).
\end{equation}
Suppose finally that there exist $a > 0$ and $r_3 > 0$ such that
\begin{equation} \label{eq:PenTerm_LocErrorBound}
  \varphi(x) \ge a \dist(x, \Omega)^{\alpha} \quad \forall x \in B(y, r_3).
\end{equation}
Then the penalty function $F_c$ is locally exact at $x_*$, $c_*(x_*) \le L / a$, and the inequality
$r(x_*, L/a) \ge r_* := \min\{ r_1/2, r_2, r_3 \}$ holds true.
\end{proposition}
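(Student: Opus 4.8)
The plan is to prove all three conclusions simultaneously by checking the defining inequality of the radius of local exactness directly. Write $r_* := \min\{r_1/2, r_2, r_3\}$, fix an arbitrary $c \ge L/a$ and an arbitrary $x \in B(x_*, r_*) \cap Q$, and show that $F_c(x) \ge F_c(x_*) = f(x_*)$ (the equality being the already noted fact that $F_c(x_*) = f(x_*)$ since $x_* \in \Omega$). Granting this, $x_*$ is a locally optimal solution of the penalized problem \eqref{prob:PenalizedProblem} for every $c \ge L/a$, which is exactly local exactness of $F_c$ at $x_*$ together with $c_*(x_*) \le L/a$; and since the inequality was obtained for \emph{all} $x \in B(x_*, r_*) \cap Q$ and \emph{all} $c \ge L/a$, the definition of the radius of local exactness gives $r_*(x_*, L/a) \ge r_*$.

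The feasible case is immediate: if $x \in M$, then $x \in \Omega$, so $\varphi(x) = 0$ because $\varphi$ is a penalty term for $x \in M$ on $Q$; hence $F_c(x) = f(x)$, and since $d(x, x_*) \le r_* \le r_1$ and $x \in \Omega$, the assumption on $r_1$ yields $f(x) \ge f(x_*)$.

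For the infeasible case $x \in Q \setminus M$ we have $\varphi(x) > 0$, and since $d(x, x_*) \le r_* \le r_3$, the local error bound \eqref{eq:PenTerm_LocErrorBound} applies at $x$ and gives $\varphi(x) \ge a\,\dist(x, \Omega)^{\alpha}$. Put $\delta := \dist(x, \Omega)$; since $x_* \in \Omega$ we have $\delta \le d(x, x_*) \le r_*$. It then suffices to establish
\[
  f(x) \ge f(x_*) - L\delta^{\alpha},
\]
because then $F_c(x) = f(x) + c\varphi(x) \ge f(x_*) - L\delta^{\alpha} + ca\,\delta^{\alpha} \ge f(x_*)$, the last step using $ca \ge L$. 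To obtain this bound, fix $\varepsilon > 0$ and choose $z_\varepsilon \in \Omega$ with $d(x, z_\varepsilon) \le \delta + \varepsilon$; by the triangle inequality $d(z_\varepsilon, x_*) \le d(z_\varepsilon, x) + d(x, x_*) \le 2r_* + \varepsilon$, so for $\varepsilon$ small enough $z_\varepsilon$ lies in $B(x_*, r_1)$ and, by the same estimate, in the ball on which $f$ is H\"{o}lder continuous. Hence $f(z_\varepsilon) \ge f(x_*)$ by the assumption on $r_1$, while \eqref{eq:HolderContinuity} gives $f(x) \ge f(z_\varepsilon) - L d(x, z_\varepsilon)^{\alpha} \ge f(x_*) - L(\delta + \varepsilon)^{\alpha}$; letting $\varepsilon \downarrow 0$ produces $f(x) \ge f(x_*) - L\delta^{\alpha}$, as required.

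The step I expect to need the most care is precisely the radius bookkeeping in the last paragraph: one must guarantee that the auxiliary almost-nearest feasible point $z_\varepsilon$ does not leave the balls on which local optimality of $x_*$ and H\"{o}lder continuity of $f$ are available, and the only a priori control available is $d(z_\varepsilon, x_*) \le 2\,d(x, x_*) + \varepsilon \le 2r_* + \varepsilon$. This is what dictates the constants appearing in $r_*$ — in particular the halving of $r_1$ — and it is also why the limiting argument $\varepsilon \downarrow 0$ is unavoidable: $\dist(x, \Omega)$ need not be attained, so one can only keep $z_\varepsilon$ inside the required balls in the limit. Everything else (the case split, the identity $F_c(x_*) = f(x_*)$, the elementary inequality $ca\,\delta^\alpha \ge L\,\delta^\alpha$, and the passage from the pointwise inequality to the three stated conclusions) is routine.
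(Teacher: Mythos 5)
Your proof follows essentially the same route as the paper's: split into the feasible and infeasible cases, in the infeasible case approximate $\dist(x,\Omega)$ by nearby feasible points to obtain $f(x) \ge f(x_*) - L\dist(x,\Omega)^{\alpha}$, and then absorb the loss via the error bound \eqref{eq:PenTerm_LocErrorBound} and $ca \ge L$. The one step that does not hold as written is the claim that the bound $d(z_\varepsilon, x_*) \le 2r_* + \varepsilon$ places $z_\varepsilon$ in $B(x_*, r_1)$ for small $\varepsilon > 0$: since $2r_*$ can equal $r_1$ and both $d(x,x_*)$ and $\dist(x,\Omega)$ can equal $r_*$, you only get $d(z_\varepsilon, x_*) \le r_1 + \varepsilon$, which exceeds $r_1$ for every $\varepsilon > 0$. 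The paper closes exactly this gap by exploiting that $x_*$ itself is feasible, so the approximating points can always be chosen to satisfy the additional constraint $d(x, z) \le d(x, x_*)$ (take $z = x_*$ if no closer feasible point is available within the $\varepsilon$ tolerance); then $d(z, x_*) \le 2d(x,x_*) \le 2r_* \le r_1$ with no slack, and your limiting argument goes through unchanged. With that one-line correction your proof is complete. (The analogous bookkeeping for membership of the approximating points in $B(x_*, r_2)$, needed to invoke \eqref{eq:HolderContinuity} as literally stated, has the same $2r_*$ versus $r_2$ looseness, but this is present in the paper's own proof as well and is harmless under the natural reading of the H\"{o}lder hypothesis.)
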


\begin{proof}
Fix any $x \in B(x_*, r_*) \cap Q$. If $x \in M$, then $x$ is feasible for the problem $(\mathcal{P})$ and by 
the definitions of $r_*$ and $r_1$ one has $F_c(x) = f(x) \ge f(x_*) = F_c(x_*)$ for any $c \ge 0$.

Suppose now that $x \notin M$. By the definition of the distance between a point and a set one can find
a sequence $\{ x_n \} \subset \Omega$ such that $d(x, x_n) \to \dist(x, \Omega)$ as $n \to \infty$ and
\[
  d(x, x_n) \le d(x, x_*) \le r_* \le \frac{r_1}{2}, \quad 
  d(x, x_n) \ge d(x, x_{n + 1}) \quad \forall n \in \mathbb{N}.
\]
Note that
\[
  d(x_n, x_*) \le d(x_n, x) + d(x, x_*) \le \frac{r_1}{2} + \frac{r_1}{2} = r_1,
\]
which means that $f(x_n) \ge f(x_*)$ for all $n \in \mathbb{N}$. Hence with the use of \eqref{eq:HolderContinuity}
one obtains that
\[
  f(x_*) - f(x) \le f(x_*) - f(x_n) + f(x_n) - f(x) \le f(x_n) - f(x) \le L d(x_n, x)^{\alpha}
\]
for all $n \in \mathbb{N}$ (here we used the fact that $d(x_n, x) \le r_* \le r_2$). Passing to the limit 
as $n \to \infty$ one can conclude that $f(x) \ge f(x_*) - L \dist(x, \Omega)^{\alpha}$. Now, applying inequality
\eqref{eq:PenTerm_LocErrorBound} one finally gets that
\[
  F_c(x) = f(x) + c \varphi(x) \ge f(x_*) - L \dist(x, \Omega)^{\alpha} + c a \dist(x, \Omega)^{\alpha}
  \ge f(x_*) = F_c(x_*)
\]
for any $c \ge L / a$. Since $x \in B(x_*, r_*) \cap Q$ was chosen arbitrarily, one can conclude that
\[
  F_c(x) \ge F_c(x_*) \quad \forall x \in B(x_*, r_*) \quad \forall c \ge \frac{L}{a},
\]
which means that the penalty function $F_c$ is locally exact at $x_*$, $c_*(x_*) \le L / a$ and
$r(x_*, L/a) \ge r_*$.
\end{proof}

\begin{remark}
Note that assumption \eqref{eq:PenTerm_LocErrorBound} simply means that the penalty term $\varphi$ admits a H\"{o}lder
error bound with modulus $a > 0$ near $x_*$ (see \cite{LiMordukhovich,KrugerLopezYangZhu}).
\end{remark}

\section{Extended well-posedness of optimization problems}
\label{sect:WellPosedness}

Let us introduce several definitions of generalized well-posedness of optimization problems. They extend the classic
concepts of the Tykhonov \cite{Tykhonov63,Tykhonov66} and the Levitin-Polyak \cite{LevitinPolyak,Polyak} well-posedness
of optimization problems to the case of problems with non-unique global minimizers, and are closely related to the
extended well-posedness \cite{DontchevZolezzi,Zolezzi96} and the generalized Levitin-Polyak well-posedness
\cite{HuangYang}.

Recall that $\Omega$ is the feasible region of the problem $(\mathcal{P})$.

\begin{definition} \label{def:TykhonovWellPosed}
{(i)~The problem $(\mathcal{P})$ is called \textit{Tykhonov well-posed in the extended sense}, if the set
$\argmin(\mathcal{P})$ is nonempty and for any sequence $\{ x_n \} \subset \Omega$ such that $f(x_n) \to f_*$ as 
$n \to \infty$ there exists a subsequence $\{ x_{n_k} \}$ converging to some $x_* \in \argmin(\mathcal{P})$.
}

\noindent{(ii)~The problem $(\mathcal{P})$ is called \textit{weakly Tykhonov well-posed in the extended sense}, if 
the set $\argmin(\mathcal{P})$ is nonempty and for any sequence $\{ x_n \} \subset \Omega$ such that $f(x_n) \to f_*$ 
as $n \to \infty$ one has $\dist(x_n, \argmin(\mathcal{P})) \to 0$ as $n \to \infty$.
}
\end{definition}

In the case when the set $\argmin(\mathcal{P})$ is a singleton (i.e. a global minimizer of $(\mathcal{P})$ is unique),
the two definitions above coincide with the standard definition of Tykhonov well-posedness: there exists a unique global
minimizer $x_*$ of the problem $(\mathcal{P})$ and for any sequence $\{ x_n \} \subset \Omega$ such that 
$f(x_n) \to f_*$ one has $x_n \to x_*$ as $n \to \infty$. Let us show how the two definition are related to each other
and the compactness of the set of globally optimal solutions of the problem $(\mathcal{P})$.

\begin{proposition} \label{prp:TykhonovWellPosedness_2Versions}
The problem $(\mathcal{P})$ is Tykhonov well-posed in the extended sense if and only if it is weakly Tykhonov well-posed
in the extended sense and the set $\argmin(\mathcal{P})$ is compact.
\end{proposition}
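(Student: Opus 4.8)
The plan is to prove the two implications separately, relying only on the standard fact that in a metric space compactness and sequential compactness coincide.

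For the forward implication, suppose $(\mathcal{P})$ is Tykhonov well-posed in the extended sense. Then $\argmin(\mathcal{P})$ is nonempty by definition, so it remains to check the weak extended well-posedness and the compactness. For the weak property I would argue by contradiction: if some minimizing sequence $\{x_n\} \subset \Omega$ satisfies $\dist(x_n, \argmin(\mathcal{P})) \not\to 0$, then there are $\varepsilon > 0$ and a subsequence $\{x_{n_k}\}$ with $\dist(x_{n_k}, \argmin(\mathcal{P})) \ge \varepsilon$ for all $k$; but $\{x_{n_k}\}$ is still a minimizing sequence, so by hypothesis some further subsequence converges to a point of $\argmin(\mathcal{P})$, contradicting the bound $\varepsilon$. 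For compactness, note that any sequence $\{x_n\} \subset \argmin(\mathcal{P})$ trivially satisfies $f(x_n) = f_* \to f_*$, hence is minimizing, and therefore has a subsequence converging to some $x_* \in \argmin(\mathcal{P})$; thus $\argmin(\mathcal{P})$ is sequentially compact, and since $(X,d)$ is a metric space, it is compact.

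For the converse, suppose $(\mathcal{P})$ is weakly Tykhonov well-posed in the extended sense and $\argmin(\mathcal{P})$ is compact; in particular $\argmin(\mathcal{P})$ is nonempty. Let $\{x_n\} \subset \Omega$ be any minimizing sequence. Weak extended well-posedness gives $\dist(x_n, \argmin(\mathcal{P})) \to 0$, so one can pick $y_n \in \argmin(\mathcal{P})$ with $d(x_n, y_n) \le \dist(x_n, \argmin(\mathcal{P})) + 1/n \to 0$ (the set being nonempty suffices; alternatively the distance to the compact set $\argmin(\mathcal{P})$ is attained). By compactness, $\{y_n\}$ has a subsequence $\{y_{n_k}\}$ converging to some $x_* \in \argmin(\mathcal{P})$, whence $d(x_{n_k}, x_*) \le d(x_{n_k}, y_{n_k}) + d(y_{n_k}, x_*) \to 0$, i.e. $x_{n_k} \to x_*$. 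Therefore $(\mathcal{P})$ is Tykhonov well-posed in the extended sense.

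I do not expect a genuine obstacle in this proof: the only points needing a word of justification are the equivalence between sequential compactness and compactness of $\argmin(\mathcal{P})$ (valid because $X$ is metric) and the selection of the near-projections $y_n$ onto the nonempty set $\argmin(\mathcal{P})$; the rest is a routine contradiction/subsequence argument combined with the triangle inequality.
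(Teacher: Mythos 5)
Your proof is correct and follows essentially the same route as the paper: the forward direction uses the identical contradiction/subsequence argument and the observation that constant-value sequences in $\argmin(\mathcal{P})$ are minimizing, and the converse extracts near-projections onto $\argmin(\mathcal{P})$ and passes to a convergent subsequence by compactness. The only (harmless) difference is that your choice $d(x_n,y_n)\le \dist(x_n,\argmin(\mathcal{P}))+1/n$ works uniformly and lets you skip the case distinction the paper makes between finitely and infinitely many indices with $\dist(x_n,\argmin(\mathcal{P}))=0$ (which is forced there by the choice $d(x_n,y_n)\le 2\dist(x_n,\argmin(\mathcal{P}))$).
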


\begin{proof}
\textbf{Part 1.} Let the problem $(\mathcal{P})$ be Tykhonov well-posed in the extended sense. Choose any sequence 
$\{ x_n \} \subset \argmin(\mathcal{P})$. Then $f(x_n) \equiv f_* \to f_*$ as $n \to \infty$, which by the first part 
of Definition~\ref{def:TykhonovWellPosed} means that there exists a subsequence $x_{n_k}$ converging to some 
$x_* \in \argmin(\mathcal{P})$. Thus, the set $\argmin(\mathcal{P})$ is compact.

Suppose by contradiction that the problem $(\mathcal{P})$ is not weakly Tykhonov well-posed in the extended sense. Then
there exists a sequence $\{ x_n \} \subset \Omega$ such that $f(x_n) \to f_*$ as $n \to \infty$, but the sequence 
$\{ \dist(x_n, \argmin(\mathcal{P})) \}$ does not converge to zero. Therefore one can find $\varepsilon > 0$ and
a subsequence $\{ x_{n_k} \}$ such that 
\begin{equation} \label{eq:TykhonovNonWeaklyWellPosedSeq}
  \dist\big( x_{n_k}, \argmin(\mathcal{P}) \big) \ge \varepsilon \quad \forall k \in \mathbb{N}. 
\end{equation}
Note that $f(x_{n_k}) \to f_*$ as $k \to \infty$, which by the definition of Tykhonov well-posedness in the extended
sense means that there exists a subsequence of the sequence $\{ x_{n_k} \}$, which we denote again by $\{ x_{n_k} \}$,
converging to some $x_* \in \argmin(\mathcal{P})$. Since $d(x_{n_k}, x_*) \ge \dist(x_{n_k}, \argmin(\mathcal{P}))$,
one can conclude that $\dist(x_{n_k}, \argmin(\mathcal{P})) \to 0$ as $k \to \infty$, which contradicts
\eqref{eq:TykhonovNonWeaklyWellPosedSeq}.

\textbf{Part 2.} Let us prove the converse statement. Suppose that the problem $(\mathcal{P})$ is weakly Tykhonov 
well-posed in the extended sense and the set $\argmin(\mathcal{P})$ is compact. Choose any sequence 
$\{ x_n \} \subset \Omega$ such that $f(x_n) \to f_*$ as $n \to \infty$. Since the problem $(\mathcal{P})$ is weakly 
Tykhonov well-posed in the extended sense, one has $\dist(x_n, \argmin(\mathcal{P})) \to 0$ as $n \to \infty$. Denote 
$t_n = \dist(x_n, \argmin(\mathcal{P}))$. 

If an infinite number of elements of the sequence $\{ t_n \}$ is equal to zero, then by virtue of the compactness 
(and, therefore, closedness) of the set $\argmin(\mathcal{P})$ there exists a subsequence $\{ x_{n_k} \}$ such that 
$\{ x_{n_k} \} \subset \argmin(\mathcal{P})$. Furthermore, applying the compactness of the set $\argmin(\mathcal{P})$
again one can extract a subsequence of this sequence, which we denote again by $\{ x_{n_k} \}$, converging to some 
$x_* \in \argmin(\mathcal{P})$.

Suppose now that only a finite number of elements of the sequence $\{ t_n \}$ is equal to zero. Without loss of
generality one can suppose that all $t_n$ are not equal to zero. By the definition of the distance between a point 
and a set, for any $n \in \mathbb{N}$ one can find $y_n \in \argmin(\mathcal{P})$ such that $d(x_n, y_n) \le 2 t_n$. 
Due to the compactness of $\argmin(\mathcal{P})$ one can extract a subsequence $\{ y_{n_k} \}$ converging to some 
$x_* \in \argmin(\mathcal{P})$. Since $d(x_n, y_n) \le 2 t_n$ and $t_n \to 0$ as $n \to \infty$, the corresponding 
subsequence $\{ x_{n_k} \}$ also converges to $x_*$.

Thus, in both cases we have found a subsequence $\{ x_{n_k} \}$ converging to some $x_* \in \argmin(\mathcal{P})$,
which by definition means that the problem $(\mathcal{P})$ is Tykhonov well-posed in the extended sense.
\end{proof} 

Next we introduce extended versions of the Levitin-Polyak well-posedness, which expand the notion of Tykhonov
well-posedness to the case of potentially infeasible minimizing sequences $\{ x_n \}$.

\begin{definition} \label{def:LevitinPolyakWellPosed}
{(i)~The problem $(\mathcal{P})$ is called \textit{Levitin-Polyak well-posed in the extended sense}, if the set
$\argmin(\mathcal{P})$ is nonempty and for any sequence $\{ x_n \} \subset X$ such that $f(x_n) \to f_*$ and 
$\dist(x_n, \Omega) \to 0$ as $n \to \infty$ there exists a subsequence $\{ x_{n_k} \}$ converging to some 
$x_* \in \argmin(\mathcal{P})$.
}

\noindent{(ii)~The problem $(\mathcal{P})$ is called \textit{weakly Levitin-Polyak well-posed in the extended sense}, 
if the set $\argmin(\mathcal{P})$ is nonempty and for any sequence $\{ x_n \} \subset X$ such that $f(x_n) \to f_*$ and
$\dist(x_n, \Omega) \to 0$ as $n \to \infty$ one has $\dist(x_n, \argmin(\mathcal{P})) \to 0$ as $n \to \infty$.
}
\end{definition}

In the case when a globally optimal solution of the problem $(\mathcal{P})$ is unique, both parts of the definition
above are reduced to the standard definition of the Levitin-Polyak well-posedness \cite{DontchevZolezzi}. In addition,
arguing in the same way as in the proof of Proposition~\ref{prp:TykhonovWellPosedness_2Versions} one can easily verify
that the following result holds true.

\begin{proposition} \label{prp:LevitinPolyakWellPosedness_2Versions}
The problem $(\mathcal{P})$ is Levitin-Polyak well-posed in the extended sense if and only if it is weakly
Levitin-Polyak well-posed in the extended sense and the set $\argmin(\mathcal{P})$ is compact.
\end{proposition}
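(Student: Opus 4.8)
The plan is to mimic the structure of the proof of Proposition~\ref{prp:TykhonovWellPosedness_2Versions}, since the two statements are perfectly parallel: the only change is that minimizing sequences are now allowed to lie in all of $X$ subject to $\dist(x_n,\Omega)\to 0$, rather than being required to lie in $\Omega$. Each place where the earlier proof invokes ``$f(x_n)\to f_*$ with $\{x_n\}\subset\Omega$'' should simply be replaced by ``$f(x_n)\to f_*$ with $\dist(x_n,\Omega)\to 0$'', and one observes that a sequence $\{x_n\}\subset\Omega$ trivially satisfies $\dist(x_n,\Omega)=0\to 0$, while any sequence lying in $\argmin(\mathcal{P})\subset\Omega$ does so as well. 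Thus the two key sample sequences used in the earlier argument remain admissible test sequences here.

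Concretely, for the forward direction I would assume $(\mathcal{P})$ is Levitin-Polyak well-posed in the extended sense. To get compactness of $\argmin(\mathcal{P})$, take an arbitrary sequence $\{x_n\}\subset\argmin(\mathcal{P})$; then $f(x_n)\equiv f_*$ and $\dist(x_n,\Omega)=0$, so Definition~\ref{def:LevitinPolyakWellPosed}(i) yields a subsequence converging to some $x_*\in\argmin(\mathcal{P})$, proving sequential compactness. For weak Levitin-Polyak well-posedness, argue by contradiction exactly as before: if it fails, there is a sequence $\{x_n\}\subset X$ with $f(x_n)\to f_*$, $\dist(x_n,\Omega)\to 0$, and $\dist(x_{n_k},\argmin(\mathcal{P}))\ge\varepsilon$ along a subsequence; applying Levitin-Polyak well-posedness in the extended sense to that subsequence extracts a further subsequence converging to some $x_*\in\argmin(\mathcal{P})$, whence $\dist(x_{n_k},\argmin(\mathcal{P}))\le d(x_{n_k},x_*)\to 0$, a contradiction.

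For the converse, assume $(\mathcal{P})$ is weakly Levitin-Polyak well-posed in the extended sense and $\argmin(\mathcal{P})$ is compact. Given $\{x_n\}\subset X$ with $f(x_n)\to f_*$ and $\dist(x_n,\Omega)\to 0$, weak well-posedness gives $t_n:=\dist(x_n,\argmin(\mathcal{P}))\to 0$. Split into the same two cases as in the earlier proof: if infinitely many $t_n$ vanish, use closedness and compactness of $\argmin(\mathcal{P})$ to extract a convergent subsequence inside $\argmin(\mathcal{P})$; otherwise pick $y_n\in\argmin(\mathcal{P})$ with $d(x_n,y_n)\le 2t_n$, extract a convergent subsequence $y_{n_k}\to x_*\in\argmin(\mathcal{P})$ by compactness, and conclude $x_{n_k}\to x_*$ since $d(x_{n_k},y_{n_k})\le 2t_{n_k}\to 0$. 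In both cases we produce the required convergent subsequence.

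I do not anticipate a genuine obstacle here — the statement is deliberately a routine analogue, and indeed the excerpt already signals this by writing ``arguing in the same way as in the proof of Proposition~\ref{prp:TykhonovWellPosedness_2Versions}.'' The only point requiring a moment's care is making sure that the test sequences used to establish compactness of $\argmin(\mathcal{P})$ are legitimate in the Levitin-Polyak setting, i.e. that elements of $\Omega$ (and of $\argmin(\mathcal{P})$) have distance zero to $\Omega$, which is immediate. Accordingly, I would either write out the short adaptation above or simply state that the proof is verbatim that of Proposition~\ref{prp:TykhonovWellPosedness_2Versions} with ``$\{x_n\}\subset\Omega$ such that $f(x_n)\to f_*$'' replaced throughout by ``$\{x_n\}\subset X$ such that $f(x_n)\to f_*$ and $\dist(x_n,\Omega)\to 0$.''
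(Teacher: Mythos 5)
Your proof is correct and is exactly the adaptation the paper intends: the paper omits the proof of Proposition~\ref{prp:LevitinPolyakWellPosedness_2Versions} precisely because it is the verbatim analogue of the proof of Proposition~\ref{prp:TykhonovWellPosedness_2Versions}, with the test sequences modified as you describe. Your observation that sequences in $\argmin(\mathcal{P}) \subset \Omega$ remain admissible because they satisfy $\dist(x_n,\Omega)=0$ is the only point needing care, and you handle it correctly.
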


Let us provide simple and easily verifiable in many important cases sufficient conditions for the Levitin-Polyak
well-posedness in the extended sense to be equivalent to the Tykhonov well-posedness in the extended sense. 

\begin{theorem}
Let $X$ be a real normed space and $f$ be uniformly continuous (in particular, Lipschitz continuous) on bounded sets.
Then for the problem $(\mathcal{P})$ to be weakly Levitin-Polyak well-posed in the extended sense it is sufficient that
this problem is weakly Tykhonov well-posed in the extended sense and
\begin{equation} \label{eq:LPWellPosedCond}
  \liminf_{(R, \delta) \to (+ \infty, 0)} \inf\Big\{ f(x) \Bigm| 
  x \in X \setminus \Omega \colon \dist(x, \Omega) < \delta, \: \| x \| \ge R \Big\} > f_*.
\end{equation}
Moreover, these conditions become necessary for the problem $(\mathcal{P})$ to be weakly Levitin-Polyak well-posed in
the extended sense, if the set $\argmin(\mathcal{P})$ is bounded.
\end{theorem}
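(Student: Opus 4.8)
The plan is to prove the sufficiency direction first and then the necessity (under the boundedness assumption) as a converse. For sufficiency, I would argue by contradiction: suppose $(\mathcal{P})$ is weakly Tykhonov well-posed in the extended sense and \eqref{eq:LPWellPosedCond} holds, yet $(\mathcal{P})$ is not weakly Levitin-Polyak well-posed in the extended sense. Then there is a sequence $\{x_n\} \subset X$ with $f(x_n) \to f_*$, $\dist(x_n, \Omega) \to 0$, but $\dist(x_n, \argmin(\mathcal{P})) \not\to 0$; passing to a subsequence we may assume $\dist(x_n, \argmin(\mathcal{P})) \ge \varepsilon$ for all $n$ and some $\varepsilon > 0$. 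The first key step is to show $\{x_n\}$ is bounded: if not, then along a further subsequence $\|x_n\| \to \infty$ while still $\dist(x_n, \Omega) \to 0$; we may also discard the finitely many indices with $x_n \in \Omega$ (if infinitely many lie in $\Omega$, weak Tykhonov well-posedness already gives $\dist(x_n, \argmin(\mathcal{P})) \to 0$ along that subsequence, a contradiction), so $x_n \in X \setminus \Omega$, and then \eqref{eq:LPWellPosedCond} forces $\liminf_n f(x_n) > f_*$, contradicting $f(x_n) \to f_*$. Hence $\{x_n\}$ is bounded.

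The second key step converts the (bounded) infeasible minimizing sequence into a feasible one so that weak Tykhonov well-posedness can be applied. For each $n$ pick $y_n \in \Omega$ with $d(x_n, y_n) \le \dist(x_n, \Omega) + 1/n \to 0$. Since $\{x_n\}$ is bounded, so is $\{y_n\}$, and both lie in a fixed bounded set $S$; by uniform continuity of $f$ on $S$ and $\|x_n - y_n\| \to 0$ we get $|f(x_n) - f(y_n)| \to 0$, hence $f(y_n) \to f_*$. Now $\{y_n\} \subset \Omega$ is a genuine minimizing sequence, so weak Tykhonov well-posedness yields $\dist(y_n, \argmin(\mathcal{P})) \to 0$. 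Finally $\dist(x_n, \argmin(\mathcal{P})) \le d(x_n, y_n) + \dist(y_n, \argmin(\mathcal{P})) \to 0$, contradicting $\dist(x_n, \argmin(\mathcal{P})) \ge \varepsilon$. This proves sufficiency.

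For necessity, assume $(\mathcal{P})$ is weakly Levitin-Polyak well-posed in the extended sense and $\argmin(\mathcal{P})$ is bounded. Weak Tykhonov well-posedness in the extended sense is immediate, since feasible minimizing sequences are a special case of the sequences in Definition~\ref{def:LevitinPolyakWellPosed}(ii) (with $\dist(x_n, \Omega) \equiv 0$). It remains to establish \eqref{eq:LPWellPosedCond}. Suppose it fails; then there exist $R_n \to +\infty$, $\delta_n \to 0$, and points $x_n \in X \setminus \Omega$ with $\dist(x_n, \Omega) < \delta_n$, $\|x_n\| \ge R_n$, and $f(x_n) \to f_*$ (one can extract such a sequence because the $\liminf$ being $\le f_*$, combined with $f \ge$ the optimal value in the limit along any Levitin-Polyak admissible sequence, pins the limit at $f_*$; more carefully, the failure of \eqref{eq:LPWellPosedCond} gives a sequence with $\limsup f(x_n) \le f_*$, and since $f(x_n) \to f_*$ is not automatic, I would instead directly take $x_n$ with $f(x_n) \le f_* + 1/n$, which is possible precisely when the inf over the shrinking family is $\le f_*$). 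Then $\{x_n\}$ is Levitin-Polyak admissible, so $\dist(x_n, \argmin(\mathcal{P})) \to 0$; since $\argmin(\mathcal{P})$ is bounded, $\{x_n\}$ is bounded, contradicting $\|x_n\| \ge R_n \to +\infty$.

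The main obstacle is the necessity part, specifically extracting from the \emph{failure} of the $\liminf$-inequality \eqref{eq:LPWellPosedCond} a single sequence that is simultaneously Levitin-Polyak admissible ($f(x_n) \to f_*$ and $\dist(x_n, \Omega) \to 0$) and has $\|x_n\| \to \infty$; one must be careful about what "$\liminf \le f_*$" over a two-parameter net of infima actually yields. The clean way is to note that negation of \eqref{eq:LPWellPosedCond} means that for every $m \in \mathbb{N}$ and every pair $(R, \delta)$ with $R$ large and $\delta$ small, the inner infimum is $\le f_* + 1/m$ for some such pair with $R \ge m$ and $\delta \le 1/m$ — equivalently, for each $m$ there is $x_m \in X \setminus \Omega$ with $\dist(x_m, \Omega) < 1/m$, $\|x_m\| \ge m$, and $f(x_m) < f_* + 1/m$ (using $f \ge$ a uniform lower bound near $\Omega$, which follows from uniform continuity on bounded sets together with $f_* > -\infty$, to rule out $-\infty$). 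Once this sequence is in hand, the argument above closes. I would spell this extraction out as a short lemma-free paragraph and otherwise keep the write-up at the level of the contradiction sketches used in Proposition~\ref{prp:TykhonovWellPosedness_2Versions}.
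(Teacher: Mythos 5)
Your sufficiency argument is correct and is essentially the paper's proof: reduce to an infeasible subsequence via weak Tykhonov well-posedness, use \eqref{eq:LPWellPosedCond} to force boundedness, shadow the sequence by nearby feasible points, and transfer the conclusion back via uniform continuity on a fixed ball. No issues there.

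The necessity part has a genuine gap, and it is exactly at the point you flag as "the main obstacle." The negation of \eqref{eq:LPWellPosedCond} only yields a sequence $\{x_m\} \subset X \setminus \Omega$ with $\dist(x_m,\Omega) < 1/m$, $\|x_m\| \ge m$ and $f(x_m) < f_* + 1/m$, i.e.\ $\limsup_m f(x_m) \le f_*$. To apply weak Levitin--Polyak well-posedness you need $f(x_m) \to f_*$, and your proposed repair --- a uniform lower bound $f \ge f_* - o(1)$ near $\Omega$ coming from uniform continuity on bounded sets --- does not apply here, because the points $x_m$ (and the nearby feasible points) escape to infinity, so uniform continuity \emph{on bounded sets} gives no control whatsoever; nothing prevents $\liminf_m f(x_m) = \underline{f} < f_*$ strictly, in which case $\{x_m\}$ is not Levitin--Polyak admissible and your contradiction never materializes. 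The paper closes this case differently: assuming (after passing to a subsequence) that $f(x_m) \to \underline{f} < f_*$, it picks $y_m \in \Omega$ with $\|x_m - y_m\| \to 0$, notes that $g_m(t) = f(t x_m + (1-t) y_m)$ is continuous with $g_m(0) = f(y_m) \ge f_*$ and $g_m(1) < f_*$ for large $m$, and uses the intermediate value theorem to produce $z_m$ on the segment with $f(z_m) = f_*$ exactly; since $\dist(z_m,\Omega) \le \|x_m - y_m\| \to 0$ and $\|z_m\| \to +\infty$, the sequence $\{z_m\}$ is Levitin--Polyak admissible yet cannot approach the bounded set $\argmin(\mathcal{P})$, which is the desired contradiction. (Note this step uses the linear structure of the normed space $X$, which is part of why the theorem is not stated for general metric spaces.) You need to add this construction, or some substitute for it, to make the necessity direction go through.
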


\begin{proof}
\textbf{Sufficiency.} Let the problem $(\mathcal{P})$ be weakly Tykhonov well-posed in the extended sense and condition
\eqref{eq:LPWellPosedCond} hold true. Suppose by contradiction that the problem $(\mathcal{P})$ is not weakly
Levitin-Polyak well-posed in the extneded sense. Then by definition one can find a sequence $\{ x_n \} \subset X$ such
that $f(x_n) \to f_*$ and $\dist(x_n, \Omega) \to 0$ as $n \to \infty$, but the distance
$\dist(x_n, \argmin(\mathcal{P}))$ does not converge to zero. Replacing, if necessary, the sequence $\{ x_n \}$ by 
a subsequence, one can suppose that there exists $\varepsilon > 0$ such that 
\begin{equation} \label{eq:LPNonWellPosedness}
  \dist(x_n, \argmin(\mathcal{P})) \ge \varepsilon \quad \forall n \in \mathbb{N}.
\end{equation}
Furthermore, due to the weak Tykhonov well-posedness in the extended sense one can also suppose that 
$\{ x_n \} \subset X \setminus \Omega$.

From the facts that $f(x_n) \to f_*$ and $\dist(x_n, \Omega) \to 0$ as $n \to \infty$ and condition
\eqref{eq:LPWellPosedCond} holds true it follows that the sequence $\{ x_n \}$ is bounded. Denote 
$R = \sup_n \| x_n \|$. By our assumption the function $f$ is uniformly continuous on $B(0, R + 1)$, which means that
for any $k \in \mathbb{N}$ there exists $\delta_k > 0$ such that $|f(x) - f(y)| < 1 / (k + 1)$ for any 
$x, y \in B(0, R + 1)$ satisfying the inequality $\| x - y \| < \delta_k$. Clearly, one can assume that $\delta_k \to 0$
as $k \to \infty$

Fix any $k \in \mathbb{N}$. Since $\dist(x_n, \Omega) \to 0$ as $n \to \infty$, one can find $n_k \in \mathbb{N}$ such
that $\dist(x_{n_k}, \Omega) < \min\{ \delta_k, 1 \}$. Consequently, there exists $y_k \in \Omega \cap B(0, R + 1)$ such
that 
\begin{equation} \label{eq:LPNonWellPosedSeqSister}
  \| x_{n_k} - y_k \| < \delta_k,
\end{equation}
which by the choice of $\delta_k$ implies that $|f(x_{n_k}) - f(y_k)| < 1 / (k + 1)$. Recall that $f(x_n) \to f_*$ as 
$n \to \infty$. Therefore $f(y_k) \to f_*$ as well. Moreover, $\{ y_k \} \subset \Omega$. Therefore 
$\dist(y_k, \argmin(\mathcal{P})) \to 0$ as $k \to \infty$ due to the fact that the problem $(\mathcal{P})$ is weakly
Tykhonov well-posed in the extended sense. Hence with the use of \eqref{eq:LPNonWellPosedSeqSister}
one can conclude that $\dist(x_{n_k}, \argmin(\mathcal{P})) \to 0$ as $k \to \infty$, which contradicts
\eqref{eq:LPNonWellPosedness}.

\textbf{Necessity.} Suppose now that the set $\argmin(\mathcal{P})$ is bounded and the problem $(\mathcal{P})$ is
weakly Levitin-Polyak well-posed in the extended sense. Then this problem is obviously weakly Tykhonov well-posed in 
the extended sense as well (see Definitions~\ref{def:TykhonovWellPosed} and \ref{def:LevitinPolyakWellPosed}). Suppose
by contradiction that condition \eqref{eq:LPWellPosedCond} is not satisfied. Then 
\[
  \liminf_{(R, \delta) \to (+ \infty, 0)} \inf\Big\{ f(x) \Bigm| 
  x \in X \setminus \Omega \colon \dist(x, \Omega) < \delta, \: \| x \| \ge R \Big\} \le f_*,
\]
which means that there exists a sequence $\{ x_n \} \subset X \setminus \Omega$ such that $\dist(x_n, \Omega) \to 0$ and
$\| x_n \| \to + \infty$ as $n \to \infty$, and $\liminf_{n \to \infty} f(x_n) \le f_*$. Replacing, if necessary, the
sequence $\{ x_n \}$ by a subsequence, one can suppose that
\[
  \underline{f} := \liminf_{n \to \infty} f(x_n) = \lim_{n \to \infty} f(x_n).
\]
If $\underline{f} = f_*$, then due to the weak Levitin-Polyak well-posedness in the extended sense one has 
$\dist(x_n, \argmin(\mathcal{P})) \to 0$ as $n \to \infty$, which is impossible by virtue of the facts that the set
$\argmin(\mathcal{P})$ is bounded, while $\| x_n \|$ increases unboundedly. Therefore, we can assume that 
$\underline{f} < f_*$.

Recall that $\dist(x_n, \Omega) \to 0$ as $n \to \infty$. Consequently, for any $n \in \mathbb{N}$ there exists 
$y_n \in \Omega$ such that $\| x_n - y_n \| \to 0$ as $n \to \infty$. For any $n \in \mathbb{N}$ introduce the function
$g_n(t) = f(t x_n + (1 - t) y_n)$, $t \in [0, 1]$. Note that the function $f$ is continuous on $X$, since it is
uniformly continuous on bounded sets. Therefore, all functions $g_n$ are continuous. Moreover, 
$g_n(0) = f(y_n) \ge f_*$, due to the fact that $y_n \in \Omega$, while $g_n(1) = f(x_n) \to \underline{f}$ as 
$n \to \infty$. Hence $g_n(1) < f_*$ for any sufficiently large $n \in \mathbb{N}$, and by the intermediate value
theorem for any such $n$ one can find $t_n \in [0, 1)$ such that $g_n(t_n) = f_*$. 

Denote $z_n = t_n x_n + (1 - t_n) y_n$. By construction $f(z_n) = g_n(t_n) = f_*$ for any sufficiently large $n$.
Furthermore, since $\{ y_n \} \subset \Omega$ and $\dist(x_n, \Omega) \to 0$ as $n \to \infty$, one also has 
$\dist(z_n, \Omega) \to 0$ as $n \to \infty$. Therefore, by the weak Levitin-Polyak well-posedness in the extended sense
one has $\dist(z_n, \argmin(\mathcal{P})) \to 0$ as $n \to \infty$. However, this is impossible due to the facts that
the set $\argmin(\mathcal{P})$ is bounded, but $\| z_n \| \to + \infty$ as $n \to \infty$, since 
$z_n = t_n x_n + (1 - t_n) y_n$ for some $t_n \in [0, 1]$, and $\| x_n \| \to + \infty$ and $\| x_n - y_n \| \to 0$ as
$n \to \infty$.
\end{proof}

\begin{corollary}
Let $X$ be a real normed space, $f$ be uniformly continuous on bounded sets, and the feasible set $\Omega$ be bounded.
Then the problem $(\mathcal{P})$ is (weakly) Levitin-Polyak well-posed in the extended sense if and only if it is
(weakly) Tykhonov well-posed in the extended sense.
\end{corollary}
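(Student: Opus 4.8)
The plan is to derive the corollary from the preceding Theorem together with Propositions~\ref{prp:TykhonovWellPosedness_2Versions} and \ref{prp:LevitinPolyakWellPosedness_2Versions}, handling the ``weak'' case first and then upgrading to the non-weak one. I would first show that, under the hypotheses, $(\mathcal{P})$ is weakly Levitin--Polyak well-posed in the extended sense if and only if it is weakly Tykhonov well-posed in the extended sense. The implication from weak Levitin--Polyak to weak Tykhonov is immediate and needs no hypotheses: a feasible minimizing sequence $\{ x_n \} \subset \Omega$ satisfies $\dist(x_n, \Omega) \equiv 0$, hence it is of the type appearing in Definition~\ref{def:LevitinPolyakWellPosed}(ii), so $\dist(x_n, \argmin(\mathcal{P})) \to 0$; and $\argmin(\mathcal{P})$ is nonempty by assumption.

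For the converse I would invoke the sufficiency part of the Theorem, whose hypotheses are: $f$ uniformly continuous on bounded sets (assumed), weak Tykhonov well-posedness in the extended sense (assumed), and condition \eqref{eq:LPWellPosedCond}. The key observation is that boundedness of $\Omega$ makes \eqref{eq:LPWellPosedCond} hold automatically: if $M_\Omega := \sup_{y \in \Omega} \| y \| < + \infty$ and $x \in X \setminus \Omega$ satisfies $\dist(x, \Omega) < \delta$, then $\| x \| < M_\Omega + \delta$, so for every $\delta \in (0, 1)$ and every $R \ge M_\Omega + 1$ the set $\{ x \in X \setminus \Omega \mid \dist(x, \Omega) < \delta, \: \| x \| \ge R \}$ is empty, and the infimum over the empty set is $+ \infty$ by convention. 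Consequently the $\liminf$ in \eqref{eq:LPWellPosedCond} equals $+ \infty$, which strictly exceeds the finite value $f_*$. Applying the Theorem then yields that $(\mathcal{P})$ is weakly Levitin--Polyak well-posed in the extended sense.

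Finally, I would pass to the non-weak statement via the two characterizations. By Proposition~\ref{prp:TykhonovWellPosedness_2Versions}, $(\mathcal{P})$ is Tykhonov well-posed in the extended sense if and only if it is weakly Tykhonov well-posed in the extended sense and $\argmin(\mathcal{P})$ is compact; by Proposition~\ref{prp:LevitinPolyakWellPosedness_2Versions}, the analogous equivalence holds for Levitin--Polyak well-posedness. Since we have just shown that, under the corollary's hypotheses, the two weak notions coincide, adjoining the common side condition ``$\argmin(\mathcal{P})$ compact'' to both sides gives the equivalence of the two non-weak notions as well.

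I do not expect a genuine obstacle here: the argument is essentially an assembly of the Theorem and the two propositions. The only point requiring care is the reduction of condition \eqref{eq:LPWellPosedCond} to a triviality when $\Omega$ is bounded --- namely the use of the convention $\inf \emptyset = + \infty$ and the fact that the two-parameter $\liminf$ equals $+\infty$ because the inner infimum is eventually over an empty set along every admissible approach to $(+\infty, 0)$ --- which I would state explicitly if this convention has not been made clear earlier.
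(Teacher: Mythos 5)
Your proposal is correct and follows exactly the intended derivation: the paper states this corollary without proof as an immediate consequence of the preceding theorem, and your argument --- the trivial implication from weak Levitin--Polyak to weak Tykhonov, the observation that boundedness of $\Omega$ forces the set in \eqref{eq:LPWellPosedCond} to be empty for large $R$ and small $\delta$ (so the $\liminf$ is $+\infty > f_*$), and the passage to the non-weak statements via Propositions~\ref{prp:TykhonovWellPosedness_2Versions} and \ref{prp:LevitinPolyakWellPosedness_2Versions} --- is precisely the assembly the author has in mind. Your explicit attention to the convention $\inf \emptyset = +\infty$ is a sensible addition.
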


\begin{corollary}
Let $X$ be a real normed space and $f$ be uniformly continuous on bounded sets. Suppose that there exists a
nondecreasing function $\omega \colon [0, + \infty) \to [0, + \infty)$ satisfying the following two conditions:
\begin{enumerate}
\item{$\omega(t) = 0$ if and only if $t = 0$.}

\item{$f(x) + \omega(\dist(x, \Omega)) \to + \infty$ as $\| x \| \to + \infty$.}
\end{enumerate}
Then the problem $(\mathcal{P})$ is (weakly) Levitin-Polyak well-posed in the extended sense if and only if it is
(weakly) Tykhonov well-posed in the extended sense.
\end{corollary}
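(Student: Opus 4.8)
The plan is to derive this corollary directly from the preceding theorem by exhibiting the function $\omega$ as a tool for verifying condition \eqref{eq:LPWellPosedCond}. So first I would assume the problem $(\mathcal{P})$ is weakly Tykhonov well-posed in the extended sense and show that the two stated conditions on $\omega$ force \eqref{eq:LPWellPosedCond} to hold; then the sufficiency part of the theorem yields weak Levitin--Polyak well-posedness in the extended sense. For the converse direction (Levitin--Polyak implies Tykhonov), note that this implication is trivial and requires nothing about $\omega$ at all: any feasible minimizing sequence is in particular a minimizing sequence with $\dist(x_n,\Omega)\equiv 0\to 0$, so Definition~\ref{def:LevitinPolyakWellPosed} immediately gives the corresponding conclusion in Definition~\ref{def:TykhonovWellPosed}.

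The heart of the argument is thus the verification of \eqref{eq:LPWellPosedCond}. Suppose, for contradiction, that it fails. Then, exactly as in the necessity part of the proof of the theorem, there is a sequence $\{x_n\}\subset X\setminus\Omega$ with $\dist(x_n,\Omega)\to 0$, $\|x_n\|\to+\infty$, and $\limsup_{n\to\infty} f(x_n)\le f_*$ (after passing to a subsequence one may take $\lim_n f(x_n) = \underline{f}\le f_*$). Since $\omega$ is nonnegative, we have $f(x_n)+\omega(\dist(x_n,\Omega)) \ge f(x_n)$, but that inequality goes the wrong way; the correct move is to use that $\dist(x_n,\Omega)\to 0$ together with the fact that $\omega(t)=0$ iff $t=0$. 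Here the only subtlety is that a nondecreasing function with $\omega(0)=0$ need not be continuous at $0$, so $\dist(x_n,\Omega)\to 0$ does \emph{not} by itself give $\omega(\dist(x_n,\Omega))\to 0$. I would handle this by a monotonicity estimate: for each fixed $m$, once $n$ is large enough that $\dist(x_n,\Omega)\le 1/m$, monotonicity gives $\omega(\dist(x_n,\Omega))\le \omega(1/m)$, hence $\limsup_n \omega(\dist(x_n,\Omega)) \le \inf_{t>0}\omega(t)$. The latter infimum is $0$: if it were some $c>0$, then $\omega(t)\ge c>0$ for all $t>0$, and since $\omega$ is nondecreasing the limit $\lim_{t\to 0^+}\omega(t)$ exists and equals $c>0$, which is consistent — so I actually need condition~1 more carefully. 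Reconsidering: condition~1 says $\omega(t)=0\iff t=0$, which permits $\omega$ to jump. So I should instead observe that we are free to \emph{replace} $\omega$ by any smaller nondecreasing function with the same property; in particular by $\widetilde\omega(t):=\min\{\omega(t), t\}$ (or by $\liminf_{s\to t^-}$ regularization), which still satisfies both conditions and additionally has $\widetilde\omega(t)\to 0$ as $t\to 0^+$. With this reduction, $\omega(\dist(x_n,\Omega))\to 0$, and then condition~2 gives $f(x_n) = \big(f(x_n)+\omega(\dist(x_n,\Omega))\big) - \omega(\dist(x_n,\Omega)) \to +\infty$, contradicting $\underline f\le f_*<+\infty$. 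This contradiction establishes \eqref{eq:LPWellPosedCond}.

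The remaining (non-weak) version follows by combining the weak version just proved with Propositions~\ref{prp:TykhonovWellPosedness_2Versions} and \ref{prp:LevitinPolyakWellPosedness_2Versions}: each of Tykhonov and Levitin--Polyak well-posedness in the extended sense is equivalent to its weak counterpart together with compactness of $\argmin(\mathcal{P})$, and the compactness clause is the same in both, so weak equivalence upgrades automatically to non-weak equivalence. The main obstacle is the one flagged above — ensuring that $\omega$ can be taken so that $\dist(x_n,\Omega)\to 0$ actually forces $\omega(\dist(x_n,\Omega))\to 0$, i.e.\ dealing with the possible discontinuity of a nondecreasing function at the origin — but this is resolved cheaply by the observation that shrinking $\omega$ preserves hypotheses~1 and~2 while buying right-continuity at $0$. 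Everything else is a direct citation of the theorem and the two well-posedness propositions.
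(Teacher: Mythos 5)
Your overall architecture is the intended one: derive the corollary from the preceding theorem by showing that the two hypotheses on $\omega$ imply condition \eqref{eq:LPWellPosedCond}, observe that the implication from Levitin--Polyak to Tykhonov well-posedness is immediate from the definitions, and upgrade the weak version to the non-weak one via Propositions~\ref{prp:TykhonovWellPosedness_2Versions} and \ref{prp:LevitinPolyakWellPosedness_2Versions}. That skeleton is correct.

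However, the step you single out as the heart of the argument contains a genuine error. Replacing $\omega$ by $\widetilde\omega(t)=\min\{\omega(t),t\}$ does \emph{not} preserve hypothesis~2: since $\widetilde\omega\le\omega$, the coercivity requirement $f(x)+\widetilde\omega(\dist(x,\Omega))\to+\infty$ is \emph{stronger} than the one you assumed, not weaker. For instance, with $X=\mathbb{R}$, $\Omega=\{0\}$, $f\equiv 0$ and $\omega(t)=t$, hypothesis~2 holds, yet $f(x)+\min\{\omega(\dist(x,\Omega)),1\}$ stays bounded; the same failure occurs for any truncation that caps $\omega$. So ``shrinking $\omega$ preserves hypotheses 1 and 2'' is false, and the regularization buys right-continuity at $0$ at the cost of possibly destroying the hypothesis you need. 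Fortunately, the entire detour is unnecessary: you do not need $\omega(\dist(x_n,\Omega))\to 0$, only that it is eventually bounded. Since $\dist(x_n,\Omega)\to 0$, one has $\dist(x_n,\Omega)\le 1$ for large $n$, whence by monotonicity $0\le\omega(\dist(x_n,\Omega))\le\omega(1)<+\infty$; combined with $\|x_n\|\to+\infty$ and hypothesis~2 this gives $f(x_n)\ge\bigl(f(x_n)+\omega(\dist(x_n,\Omega))\bigr)-\omega(1)\to+\infty$, contradicting $\lim_{n\to\infty}f(x_n)=\underline{f}\le f_*<+\infty$. This is exactly the monotonicity estimate you already wrote down (take $m=1$); note that hypothesis~1 on $\omega$ is not even needed for this verification. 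With that correction the proof is complete.
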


In the context of duality theory and primal-dual methods, one often deals with sequences $\{ x_n \}$ for which it is
difficult to prove the convergence of the distance $\dist(x_n, \Omega)$ to zero, but the convergence of some natural
infeasibility measure to zero can be easily verified. Being inspired by the ideas of Huang and Yang \cite{HuangYang}, 
below we introduce the definition of generalized well-posedness that it is tailored to this specific context. 

Suppose that some infeasibility measure $\varphi$ for the constraint $x \in M$ on the set $Q$ is given. Recall that it
means that $\varphi \colon X \to [0, + \infty]$ and for any $x \in Q$ one has $\varphi(x) = 0 \iff x \in M$, that is,
for any point that violates the constraint $x \in M$ (i.e. $x \notin M$) the value $\varphi(x)$ is positive and can be
viewed as an infeasibility measure for the constraint $x \in M$. In particular, if $M$ is defined by the constraint
$G(x) \in K$, that is, $M = \{ x \in X \mid G(x) \in K \}$ for some mapping $G \colon X \to Y$ and some nonempty set $K
\subset Y$, where $Y$ is a metric space, one can define $\varphi(x) = \dist(G(x), K)$.

\begin{definition} \label{def:LevitinPolyakWellPosed_InfMeas}
{(i)~The problem $(\mathcal{P})$ is called \textit{Levitin-Polyak well-posed with respect to the infeasibility measure
$\varphi$}, if the set $\argmin(\mathcal{P})$ is nonempty and for any sequence $\{ x_n \} \subset X$ such that 
$f(x_n) \to f_*$ and $\varphi(x_n) \to 0$ as $n \to \infty$ there exists a subsequence $\{ x_{n_k} \}$ converging to
some $x_* \in \argmin(\mathcal{P})$.
}

\noindent{(ii)~The problem $(\mathcal{P})$ is called \textit{weakly Levitin-Polyak well-posed with respect to the
infeasibility measure $\varphi$}, if the set $\argmin(\mathcal{P})$ is nonempty and for any sequence 
$\{ x_n \} \subset X$ such that $f(x_n) \to f_*$ and $\varphi(x_n) \to 0$ as $n \to \infty$ one has 
$\dist(x_n, \argmin(\mathcal{P})) \to 0$ as $n \to \infty$.
}
\end{definition}

\begin{remark}
Note that in the case when the feasible region $\Omega = M \cap Q$ of the problem $(\mathcal{P})$ is closed, the
Levitin-Polyak well-posedness in the extended sense coincides with the Levitin-Polyak well-posedness with respect to
the infeasibility measure $\varphi(\cdot) := \dist(\cdot, \Omega)$.
\end{remark}

Let us first point out how the two parts of this definition are connected to each other. The proof of the following
result is similar to the proof of Proposition~\ref{prp:TykhonovWellPosedness_2Versions}

\begin{proposition} \label{prp:GenLevitinPolyakWellPosedness_2Versions}
The problem $(\mathcal{P})$ is Levitin-Polyak well-posed with respect to the infeasibility measure $\varphi$ if and 
only if it is weakly Levitin-Polyak well-posed with respect to the infeasibility measure $\varphi$ and the set 
$\argmin(\mathcal{P})$ is compact.
\end{proposition}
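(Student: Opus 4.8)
The plan is to mirror the proof of Proposition~\ref{prp:TykhonovWellPosedness_2Versions}, simply replacing the feasibility requirement $\{x_n\} \subset \Omega$ there by the pair of conditions $\{x_n\} \subset X$ and $\varphi(x_n) \to 0$ from Definition~\ref{def:LevitinPolyakWellPosed_InfMeas}. The one new observation needed is that every $x \in \argmin(\mathcal{P})$ is feasible, hence belongs to $Q$ and to $M$, so that $\varphi(x) = 0$ by the definition of an infeasibility measure. This ensures that constant sequences lying inside $\argmin(\mathcal{P})$ are admissible test sequences in the sense of Definition~\ref{def:LevitinPolyakWellPosed_InfMeas}.

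For the forward implication, I would first prove compactness of $\argmin(\mathcal{P})$: given any $\{x_n\} \subset \argmin(\mathcal{P})$, one has $f(x_n) \equiv f_* \to f_*$ and $\varphi(x_n) \equiv 0 \to 0$, so Levitin-Polyak well-posedness with respect to $\varphi$ supplies a subsequence converging to some point of $\argmin(\mathcal{P})$. Then I would establish the weak version by contradiction: were it to fail, there would exist a sequence with $f(x_n) \to f_*$, $\varphi(x_n) \to 0$ and, after passing to a subsequence, $\dist(x_n, \argmin(\mathcal{P})) \ge \varepsilon > 0$ for all $n$; applying the non-weak well-posedness to this subsequence would yield a further subsequence converging to some $x_* \in \argmin(\mathcal{P})$, forcing $\dist(x_n, \argmin(\mathcal{P})) \to 0$ along it, a contradiction.

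For the converse, let $\{x_n\} \subset X$ satisfy $f(x_n) \to f_*$ and $\varphi(x_n) \to 0$. Weak well-posedness gives $t_n := \dist(x_n, \argmin(\mathcal{P})) \to 0$. If infinitely many $t_n$ vanish, then closedness of the compact set $\argmin(\mathcal{P})$ places a subsequence of $\{x_n\}$ inside $\argmin(\mathcal{P})$, and compactness extracts a convergent sub-subsequence. Otherwise, assuming without loss of generality that all $t_n > 0$, choose $y_n \in \argmin(\mathcal{P})$ with $d(x_n, y_n) \le 2 t_n$, extract $y_{n_k} \to x_* \in \argmin(\mathcal{P})$ by compactness, and observe that $x_{n_k} \to x_*$ since $d(x_{n_k}, y_{n_k}) \le 2 t_{n_k} \to 0$. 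In either case a subsequence of $\{x_n\}$ converges to a point of $\argmin(\mathcal{P})$, which is exactly Levitin-Polyak well-posedness with respect to $\varphi$.

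I do not anticipate any genuine obstacle: the argument is essentially word-for-word that of Proposition~\ref{prp:TykhonovWellPosedness_2Versions}, with the feasible region replaced by its $\varphi$-relaxation. The only point requiring attention is the verification that $\varphi$ vanishes on $\Omega \supseteq \argmin(\mathcal{P})$, so that sequences drawn from $\argmin(\mathcal{P})$ qualify as test sequences in Definition~\ref{def:LevitinPolyakWellPosed_InfMeas}; this is immediate from the definition of an infeasibility measure.
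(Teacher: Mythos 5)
Your proof is correct and follows exactly the route the paper intends: the paper omits the proof, stating only that it is analogous to that of Proposition~\ref{prp:TykhonovWellPosedness_2Versions}, and your adaptation — including the observation that $\varphi$ vanishes on $\argmin(\mathcal{P}) \subseteq M \cap Q$ so that constant sequences in $\argmin(\mathcal{P})$ are admissible test sequences — is precisely the required modification.
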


Let us also show that the Levitin-Polyak well-posedness with respect to an infeasibility measure is implied by 
the Levitin-Polyak well-posedness in the extended sense, provided certain (nonlinear) nonlocal error bounds hold. It
should be noted that such nonlocal error bounds are often explicitly or implicitly used in various sufficient conditions
for the global exactness of penalty functions in the infinite dimensional case (see, e.g. 
\cite{Demyanov_Long,Dolgopolik_ExPenI,DolgopolikFominyh,DemyanovGiannessiKarelin,Demyanov2003,DemyanovGiannessi,
DemyanovGiannessiTamasyan,Karelin,Zaslavski}). Furthermore, under some additional assumptions one can prove 
the validity of such nonlocal error bounds in some particular cases, e.g. convex case \cite{Deng98,WangPang}, 
DC case \cite{LeThi}, piecewise affine case \cite{Dolgopolik_PieceAff}, polynomial case
\cite{Li2013,Vui2013,DinhHaPham}, etc.

\begin{proposition}
Let there exist $\delta > 0$ and a continuous nondecreasing function $\omega \colon [0, + \infty) \to [0, + \infty)$ 
such that $\omega(t) = 0$ if and only if $t = 0$ and
\begin{equation} \label{eq:NonlocalErrorBound}
  \varphi(x) \ge \omega(\dist(x, \Omega)) \quad \forall x \in X \setminus \Omega \colon \varphi(x) < \delta.
\end{equation}
Then for the problem $(\mathcal{P})$ to be weakly Levitin-Polyak well-posed with respect to the infeasibility measure
$\varphi$ it is sufficient that this problem is weakly Levitin-Polyak well-posed in the extended sense.
\end{proposition}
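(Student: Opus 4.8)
The plan is to verify the definition of weak Levitin-Polyak well-posedness with respect to $\varphi$ directly. The nonemptiness of $\argmin(\mathcal{P})$ is part of the hypothesis (it is built into weak Levitin-Polyak well-posedness in the extended sense), so only the convergence property remains. First I would fix an arbitrary sequence $\{ x_n \} \subset X$ with $f(x_n) \to f_*$ and $\varphi(x_n) \to 0$ as $n \to \infty$, and reduce the desired conclusion $\dist(x_n, \argmin(\mathcal{P})) \to 0$ to the intermediate claim $\dist(x_n, \Omega) \to 0$: once the latter is proved, the former follows immediately by applying the weak Levitin-Polyak well-posedness in the extended sense to this same sequence, whose values $f(x_n)$ already converge to $f_*$.

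To prove $\dist(x_n, \Omega) \to 0$, the key step is to ``invert'' $\omega$ by means of the error bound \eqref{eq:NonlocalErrorBound}. Since $\varphi(x_n) \to 0$, there is $N \in \mathbb{N}$ such that $\varphi(x_n) < \delta$ for all $n \ge N$. For every such $n$ I would distinguish two cases. If $x_n \in \Omega$, then $\dist(x_n, \Omega) = 0$, hence $\omega(\dist(x_n, \Omega)) = \omega(0) = 0 \le \varphi(x_n)$. If $x_n \in X \setminus \Omega$, then \eqref{eq:NonlocalErrorBound} applies and yields $\omega(\dist(x_n, \Omega)) \le \varphi(x_n)$. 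In either case $0 \le \omega(\dist(x_n, \Omega)) \le \varphi(x_n)$ for all $n \ge N$, so $\omega(\dist(x_n, \Omega)) \to 0$ as $n \to \infty$.

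Finally I would invoke the elementary fact that a nondecreasing function $\omega \colon [0, +\infty) \to [0, +\infty)$ with $\omega(t) = 0 \iff t = 0$ has the property that, for any sequence $\{ t_n \} \subset [0, +\infty)$, $\omega(t_n) \to 0$ implies $t_n \to 0$. Indeed, if $t_n \not\to 0$, there would exist $\varepsilon > 0$ and a subsequence with $t_{n_k} \ge \varepsilon$, whence $\omega(t_{n_k}) \ge \omega(\varepsilon) > 0$ by monotonicity, contradicting $\omega(t_n) \to 0$. (Continuity of $\omega$ is not even needed for this implication; monotonicity alone suffices.) Applying this with $t_n = \dist(x_n, \Omega)$ gives $\dist(x_n, \Omega) \to 0$, which completes the argument.

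As for the main obstacle, there is essentially none: the result is immediate once one recognizes that a nonlinear error bound of the form \eqref{eq:NonlocalErrorBound} is precisely the device needed to transfer the infeasibility information $\varphi(x_n) \to 0$ into the geometric information $\dist(x_n, \Omega) \to 0$. The only point requiring a moment's care is that \eqref{eq:NonlocalErrorBound} is asserted only for infeasible points, so the feasible members of the sequence must be treated separately; this is trivial, since for them $\dist(\cdot, \Omega) = 0$. Note also that this proposition specializes, via $\omega = \mathrm{id}$ and $\varphi(\cdot) = \dist(\cdot, \Omega)$, to the equivalence recorded in the remark following Definition~\ref{def:LevitinPolyakWellPosed_InfMeas} when $\Omega$ is closed.
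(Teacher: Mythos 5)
Your proposal is correct and follows exactly the same route as the paper's (much more terse) proof: use the error bound \eqref{eq:NonlocalErrorBound} to convert $\varphi(x_n) \to 0$ into $\dist(x_n, \Omega) \to 0$, then invoke weak Levitin--Polyak well-posedness in the extended sense. Your write-up merely fills in the details the paper leaves implicit (the separate treatment of feasible terms and the observation that monotonicity of $\omega$ alone suffices to invert $\omega(t_n) \to 0$ into $t_n \to 0$), both of which are handled correctly.
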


\begin{proof}
The validity of the claim of this proposition follows directly from Definition~\ref{def:LevitinPolyakWellPosed} and 
\ref{def:LevitinPolyakWellPosed_InfMeas} and the fact that by inequality \eqref{eq:NonlocalErrorBound}, if for some
sequence $\{ x_n \} \subset X$ one has $\varphi(x_n) \to 0$ as $n \to \infty$, then $\dist(x_n, \Omega) \to 0$ as 
$n \to \infty$ as well.
\end{proof}

\begin{proposition}
Let $X$ be a real normed space and the following two conditions be valid:
\begin{enumerate}
\item{there exists a nondecreasing function $\eta \colon[0, + \infty) \to [0, + \infty)$ such that $\eta(t) = 0$ if 
and only if $t = 0$ and $f(x) + \eta(\varphi(x)) \to + \infty$ as $\| x \| \to + \infty$;
}

\item{for any $R > 0$ there exist $\delta_R > 0$ and a continuous nondecreasing function 
$\omega_R \colon [0, + \infty) \to [0, + \infty)$ such that $\omega_R(t) = 0$ if and only if $t = 0$ and
\[
  \varphi(x) \ge \omega_R(\dist(x, \Omega)) 
  \quad \forall x \in B(0, R) \setminus \Omega \colon \varphi(x) < \delta_R.
\]
}
\end{enumerate}
Then for the problem $(\mathcal{P})$ to be weakly Levitin-Polyak well-posed with respect to the infeasibility measure
$\varphi$ it is sufficient that this problem is weakly Levitin-Polyak well-posed in the extended sense.
\end{proposition}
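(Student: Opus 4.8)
The plan is to verify the definition of weak Levitin--Polyak well-posedness with respect to $\varphi$ directly, following the pattern of the preceding proposition combined with the boundedness trick from the earlier theorem. So I would start from an arbitrary sequence $\{x_n\} \subset X$ with $f(x_n) \to f_*$ and $\varphi(x_n) \to 0$ as $n \to \infty$ and aim to show $\dist(x_n, \argmin(\mathcal{P})) \to 0$; the nonemptiness of $\argmin(\mathcal{P})$ that is part of the conclusion is already contained in the hypothesis that $(\mathcal{P})$ is weakly Levitin--Polyak well-posed in the extended sense. The argument splits into three steps: first, condition~1 forces $\{x_n\}$ to be bounded; second, condition~2 converts $\varphi(x_n) \to 0$ into $\dist(x_n,\Omega) \to 0$; third, weak Levitin--Polyak well-posedness in the extended sense finishes the job.

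For the boundedness step, observe that since $f(x_n) \to f_*$ and $\varphi(x_n) \to 0$, both sequences are bounded, say $f(x_n) \le C_1$ and $\varphi(x_n) \le C_2$ for all $n$. As $\eta$ is nondecreasing, $\eta(\varphi(x_n)) \le \eta(C_2)$, hence $f(x_n) + \eta(\varphi(x_n)) \le C_1 + \eta(C_2)$ for all $n$. If $\{\|x_n\|\}$ were unbounded, a subsequence with $\|x_{n_k}\| \to +\infty$ would yield $f(x_{n_k}) + \eta(\varphi(x_{n_k})) \to +\infty$ by condition~1, a contradiction. So we may fix $R > 0$ with $\{x_n\} \subset B(0,R)$.

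For the error-bound step, take $\delta_R > 0$ and the nondecreasing $\omega_R$ from condition~2 for this $R$. Since $\varphi(x_n) \to 0$, there is $N$ with $\varphi(x_n) < \delta_R$ for all $n \ge N$. For such $n$: if $x_n \notin \Omega$, then $\omega_R(\dist(x_n,\Omega)) \le \varphi(x_n)$ by condition~2; if $x_n \in \Omega$, then $\omega_R(\dist(x_n,\Omega)) = \omega_R(0) = 0 \le \varphi(x_n)$. In either case $\omega_R(\dist(x_n,\Omega)) \le \varphi(x_n) \to 0$. Setting $s_n = \dist(x_n,\Omega)$, I claim $s_n \to 0$: otherwise some subsequence satisfies $s_{n_k} \ge \varepsilon > 0$, and then monotonicity of $\omega_R$ and $\omega_R(\varepsilon) > 0$ give $\omega_R(s_{n_k}) \ge \omega_R(\varepsilon) > 0$ for all $k$, contradicting $\omega_R(s_n) \to 0$. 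Hence $\dist(x_n,\Omega) \to 0$.

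Finally, since $f(x_n) \to f_*$ and $\dist(x_n,\Omega) \to 0$, the assumed weak Levitin--Polyak well-posedness of $(\mathcal{P})$ in the extended sense gives $\dist(x_n,\argmin(\mathcal{P})) \to 0$, as required. I do not expect any real obstacle: this is essentially the coercivity-based boundedness argument of the earlier theorem glued to the localized nonlinear error bound handled as in the previous proposition. The only point needing a little care is that neither $\eta$ nor $\omega_R$ is assumed continuous at $0$, which is why the argument relies only on their monotonicity and the equivalence ``value zero $\iff$ argument zero'' rather than on passing to limits inside them.
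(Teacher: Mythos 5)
Your proof is correct and follows exactly the route the paper sketches: condition 1 forces boundedness of the sequence, condition 2 then upgrades $\varphi(x_n)\to 0$ to $\dist(x_n,\Omega)\to 0$ via monotonicity of $\omega_R$, and the extended weak Levitin--Polyak well-posedness finishes. You have simply supplied the details the paper leaves implicit (your observation that only monotonicity of $\omega_R$, not continuity, is used is accurate).
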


\begin{proof}
The first condition of the proposition implies that if for some sequence $\{ x_n \} \subset X$ one has $f(x_n) \to f_*$
and $\varphi(x_n) \to 0$ as $n \to \infty$, then the sequence $\{ x_n \}$ is bounded. Hence with use of the second
condition one can conclude that for any such sequence $\{ x_n \}$ one has $\dist(x_n, \Omega) \to 0$ as $n \to \infty$,
which obviously implies the required result.
\end{proof}

Under relatively mild assumptions on the function $\varphi$ one can show that the opposite results to the two previous 
propositions hold true as well.

\begin{proposition}
Let $X$ be a real normed space and one of the two following conditions hold true:
\begin{enumerate}
\item{$\varphi$ is uniformly continuous on the set $\{ x \in X \mid \dist(x, \Omega) < \delta \}$ 
for some $\delta > 0$;}

\item{$\varphi$ is uniformly continuous on bounded sets and one of the following three conditions are satisfied:
  \begin{enumerate}
    \item{the set $\Omega$ is bounded,}
    
    \item{there exists a nondecreasing function $\eta \colon [0, + \infty) \to [0, + \infty)$ such that
    $f(x) + \eta(\dist(x, \Omega)) \to + \infty$ as $\| x \| \to + \infty$,}
    
    \item{the set $\{ x \in X \mid |f(x) - f_*| < \alpha, \: \dist(x, \Omega) < \delta \}$ is bounded for some 
    $\alpha > 0$ and $\delta > 0$.}
  \end{enumerate}
}
\end{enumerate}
Then for the problem $(\mathcal{P})$ to be weakly Levitin-Polyak well-posed in the extended sense it is sufficient 
that this problem is weakly Levitin-Polyak well-posed with respect to the infeasibility measure $\varphi$.
\end{proposition}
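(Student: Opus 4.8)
The plan is to reduce the extended notion to the $\varphi$-version by showing that every sequence which is Levitin--Polyak minimizing with respect to the distance $\dist(\cdot,\Omega)$ is automatically Levitin--Polyak minimizing with respect to $\varphi$. Concretely, fix a sequence $\{x_n\}\subset X$ with $f(x_n)\to f_*$ and $\dist(x_n,\Omega)\to 0$ as $n\to\infty$. The whole argument boils down to proving that $\varphi(x_n)\to 0$; once this is established, the assumed weak Levitin--Polyak well-posedness with respect to $\varphi$ (Definition~\ref{def:LevitinPolyakWellPosed_InfMeas}) yields $\dist(x_n,\argmin(\mathcal{P}))\to 0$, which is exactly weak Levitin--Polyak well-posedness in the extended sense (Definition~\ref{def:LevitinPolyakWellPosed}). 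Note also that $\argmin(\mathcal{P})\neq\emptyset$ transfers verbatim from the hypothesis, so nothing extra is needed there.

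The key device is to compare $x_n$ with a nearby feasible point on which $\varphi$ vanishes. For each $n$ pick $y_n\in\Omega$ with $\|x_n-y_n\|\le 2\dist(x_n,\Omega)$ when $\dist(x_n,\Omega)>0$ and $\|x_n-y_n\|\le 1/n$ otherwise, so that $\|x_n-y_n\|\to 0$; since $y_n\in\Omega\subset M\cap Q$, we have $\varphi(y_n)=0$. Under condition~1, both $x_n$ (for $n$ large, since $\dist(x_n,\Omega)<\delta$ eventually) and all $y_n$ lie in the set $U_\delta:=\{x\in X\mid\dist(x,\Omega)<\delta\}$, on which $\varphi$ is uniformly continuous; hence $\varphi(x_n)=|\varphi(x_n)-\varphi(y_n)|\to 0$ by uniform continuity together with $\|x_n-y_n\|\to 0$, and we are done.

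Under condition~2 the only additional work is to confine $\{x_n\}$, and hence $\{y_n\}$, to a fixed bounded set before invoking uniform continuity of $\varphi$ on bounded sets; this is where the three alternatives enter. If $\Omega$ is bounded, then $\|x_n\|\le\|y_n\|+\|x_n-y_n\|$ is bounded for $n$ large. If $f(x)+\eta(\dist(x,\Omega))\to+\infty$ as $\|x\|\to+\infty$, then $f(x_n)$ is bounded (being convergent) and $\eta(\dist(x_n,\Omega))$ is bounded (being the image under a finite-valued nondecreasing $\eta$ of the bounded sequence $\{\dist(x_n,\Omega)\}$), so an unbounded subsequence of $\{x_n\}$ would force $f+\eta(\dist(\cdot,\Omega))$ to diverge to $+\infty$ along it, a contradiction; thus $\{x_n\}$ is bounded. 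If $\{x\mid |f(x)-f_*|<\alpha,\ \dist(x,\Omega)<\delta\}$ is bounded, then $x_n$ lies in it for all large $n$ because $f(x_n)\to f_*$ and $\dist(x_n,\Omega)\to 0$, so again $\{x_n\}$ is bounded. In every case choose $R>0$ with $\|x_n\|\le R$ for all $n$; then $\|y_n\|\le R+1$ eventually, and applying uniform continuity of $\varphi$ on the bounded set $B(0,R+1)$ to the pairs $(x_n,y_n)$ (with $\|x_n-y_n\|\to 0$ and $\varphi(y_n)=0$) gives $\varphi(x_n)\to 0$; we then conclude as before.

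The main obstacle is essentially bookkeeping rather than depth: one must make sure in condition~2 that the auxiliary points $y_n$ stay inside the \emph{same} bounded set on which $\varphi$ is controlled, and that the "boundedness of the minimizing sequence" step is carried out uniformly across the three listed alternatives, each of which needs its own short justification. No single step is difficult, but the case analysis in condition~2 is the part that requires the most care to state cleanly.
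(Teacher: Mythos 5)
Your proposal is correct and follows essentially the same route as the paper: show that $\varphi(x_n)\to 0$ by comparing $x_n$ with nearby feasible points on which $\varphi$ vanishes and invoking uniform continuity of $\varphi$ on the relevant set (the $\delta$-neighbourhood of $\Omega$ in case~1, a bounded set in case~2 after establishing boundedness of the sequence via each of the three alternatives), and then apply the assumed well-posedness with respect to $\varphi$. The paper's proof is just a more condensed version of the same argument, so no substantive difference to report.
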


\begin{proof}
Let the problem $(\mathcal{P})$ be weakly Levitin-Polyak well-posed with respect to the infeasibility measure 
$\varphi$. Suppose that a sequence $\{ x_n \} \subset X$ is such that $f(x_n) \to f_*$ and $\dist(x_n, \Omega) \to 0$ 
as $n \to \infty$. If the first assumption holds true, then taking into account the fact that $\varphi(x) = 0$ for 
any $x \in \Omega$ one can conclude that for any $\varepsilon > 0$ there exists $r \in (0, \delta)$ such that 
$\varphi(x) < \varepsilon$ for any $x \in X$ satisfying the inequality $\dist(x, \Omega) < r$. Therefore, for any 
sufficiently large $n$ one has $\varphi(x_n) < \varepsilon$, which implies that $\varphi(x_n) \to 0$ as 
$n \to \infty$. Hence by Definition~\ref{def:LevitinPolyakWellPosed_InfMeas} one can conclude that 
$\dist(x_n, \argmin(\mathcal{P})) \to 0$ as $n \to \infty$, which means that the problem $(\mathcal{P})$ is weakly
Levitin-Polyak well-posed in the extended sense.

The proof of the claim of the proposition in the case when $\varphi$ is uniformly continuous on bounded sets
almost literally repeats the proof of the first case. One only needs to note that each of the three additional 
assumptions ensures that any sequence $\{ x_n \} \subset X$ such that $f(x_n) \to f_*$ and $\dist(x_n, \Omega) \to 0$ 
as $n \to \infty$ is necessarily bounded.
\end{proof}

\begin{remark}
For the sake of completeness let us note that both the Levitin-Polyak well-posedness in the extended sense and
the Levitin-Polyak well-posedness with respect to any infeasibility measure imply Tykhonov well-posedness. The converse
statement obviously does not hold true in the general case.
\end{remark}

\section{Exact penalty functions for well-posed problems}
\label{sect:ExactPenaltyFunctions}

As was noted in the introduction, the localization principle for exact penalty functions/augmented Lagrangians no 
longer holds true in the general infinite dimensional case and one typically has to employ restrictive assumptions, 
such as semiglobal error bounds, semiglobal metric regularity of constraints, the Palais-Smail condition, etc., 
in order to prove global exactness of penalty functions in this case. Our aim is to prove that in the infinite 
dimensional case the localization principle is valid for well-posed problems. To this end, we need to first recall 
the definition of the zero duality gap property \cite{Jeyakumar,RubinovHuangYang}.

Let, as above, $\varphi \colon X \to [0, + \infty]$ be an infeasibility measure/penalty term for the constraint
$x \in M$ on the set $Q$. Define 
\[
  F_c(x) = f(x) + c \varphi(x), \quad \Theta(c) := \inf_{x \in Q} F_c(x) \quad \forall c \ge 0.
\]
The function $\Theta$ is called \textit{the dual function} for the penalty function $F_c$. The function $\Theta$ 
is obviously non-decreasing and concave. Furthermore, it has a non-empty effective domain if and only if $F_c$ is 
bounded below on $Q$ for some $c \ge 0$. The problem 
\begin{equation} \label{eq:DualProblem_Penalty}
  \max_{c \ge 0} \: \Theta(c)
\end{equation}
is called \textit{the dual problem} (associated with the penalty function $F_c$). Its optimal value is denoted by
$\Theta_*$.

From the fact that $F_c(x) = f(x)$ for any feasible point $x$ it follows that
\[
  \Theta(c) = \inf_{x \in Q} F_c(x) \le \inf_{x \in \Omega} F_c(x) = \inf_{x \in \Omega} f(x) = f_*,
\]
which implies that $\Theta_* \le f_*$, that is, \textit{the weak duality} (between the problem $(\mathcal{P})$ and dual
problem \eqref{eq:DualProblem_Penalty}) holds true. The value $f_* - \Theta_* \ge 0$ is called \textit{the duality gap}.

\begin{definition}
One says that \textit{the strong duality} holds or that the penalty function $F_c$ has \textit{the zero duality gap
property}, if $\Theta_* = f_*$, that is,
\[
  \sup_{c \ge 0} \inf_{x \in Q} F_c(x) = \inf_{x \in \Omega} f(x).
\]
\end{definition}

\begin{remark} \label{rmrk:ExactnessImpliesZeroDualGap}
Note that by Proposition~\ref{prp:ExactnessViaOptValues} the zero duality gap property is a necessary condition for 
the penalty function $F_c$ to be globally exact.
\end{remark}

We will need the following auxiliary result connecting the zero duality gap property with the lower semicontinuity of
the optimal value/perturbation function of the problem $(\mathcal{P})$ (see, e.g. 
\cite[Theorem~3.16]{Dolgopolik_ExPenII}).

\begin{theorem} \label{thrm:ZeroDualGap_vs_LSC_OptValFunc}
The penalty function $F_c$ has the zero duality gap property if and only if it is bounded below on $Q$ for some $c > 0$
and the optimal value function (with respect to the infeasibility measure $\varphi$)
\[
  \beta_{\varphi}(p) := \inf\Big\{ f(x) \Bigm| x \in Q \colon \varphi(x) \le p \Big\} \quad \forall p \ge 0
\]
is lower semicontinuous (lsc) at the origin.
\end{theorem}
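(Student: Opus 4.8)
The plan is to reduce everything to two elementary observations about the optimal value function. First, the set $\{x \in Q \mid \varphi(x) \le 0\}$ equals $\{x \in Q \mid \varphi(x) = 0\} = Q \cap M = \Omega$, so $\beta_\varphi(0) = f_*$; second, $\beta_\varphi$ is nonincreasing on $[0,+\infty)$, so $\beta_\varphi(p) \le f_*$ for all $p \ge 0$ and the limit $\lim_{p\to 0^+}\beta_\varphi(p) = \sup_{p>0}\beta_\varphi(p)$ exists. Consequently, since $\beta_\varphi(0)=f_*$, lower semicontinuity of $\beta_\varphi$ at the origin is simply the statement $\sup_{p>0}\beta_\varphi(p) = f_*$. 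Recalling that weak duality gives $\Theta(c) \le f_*$ for every $c \ge 0$, the theorem thus becomes: $\sup_{c\ge 0}\Theta(c) = f_*$ if and only if $F_c$ is bounded below on $Q$ for some $c > 0$ and $\sup_{p>0}\beta_\varphi(p) = f_*$. I would state these reductions first and then prove the two implications.

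For the implication "$\Leftarrow$", I would fix $\varepsilon > 0$, pick $\delta > 0$ with $\beta_\varphi(\delta) > f_* - \varepsilon$ (possible since $\sup_{p>0}\beta_\varphi(p) = f_*$), and split $Q$ into $\{x \in Q \mid \varphi(x) \le \delta\}$ and $\{x \in Q \mid \varphi(x) > \delta\}$. On the first set, $F_c(x) \ge f(x) \ge \beta_\varphi(\varphi(x)) \ge \beta_\varphi(\delta) > f_* - \varepsilon$ for every $c \ge 0$, using monotonicity of $\beta_\varphi$. On the second set, writing $F_c(x) = F_{c_0}(x) + (c-c_0)\varphi(x)$, where $F_{c_0} \ge -A$ on $Q$ by the boundedness-below hypothesis, one gets $F_c(x) \ge -A + (c-c_0)\delta > f_* - \varepsilon$ once $c$ is large enough. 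Hence $\Theta(c) \ge f_* - \varepsilon$ for such $c$, so $\sup_{c\ge 0}\Theta(c) \ge f_* - \varepsilon$, and letting $\varepsilon \to 0$ together with weak duality yields $\sup_{c\ge 0}\Theta(c) = f_*$.

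For the implication "$\Rightarrow$", assume $\sup_{c\ge0}\Theta(c) = f_*$. Since $f_*$ is finite, some $\Theta(c)$ is finite, and since $\Theta$ is nondecreasing, $\Theta(1) > -\infty$, i.e. $F_1$ is bounded below on $Q$; this gives the boundedness-below clause (with $c = 1 > 0$). For the lsc clause, fix $\varepsilon > 0$ and choose $c \ge 0$ with $\Theta(c) > f_* - \varepsilon$, so that $f(x) + c\varphi(x) > f_* - \varepsilon$ for all $x \in Q$. Then for every $\delta > 0$ and every $x \in Q$ with $\varphi(x) \le \delta$ one has $f(x) > f_* - \varepsilon - c\delta$, whence $\beta_\varphi(\delta) \ge f_* - \varepsilon - c\delta$; choosing $\delta$ small makes the right-hand side exceed $f_* - 2\varepsilon$, so $\sup_{p>0}\beta_\varphi(p) \ge f_* - 2\varepsilon$ for every $\varepsilon > 0$, i.e. $\sup_{p>0}\beta_\varphi(p) = f_*$.

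There is no genuine analytic difficulty here: the only input is monotonicity of $\beta_\varphi$ and weak duality. The step that carries the actual content is the reduction in the first paragraph, namely recognizing that lsc of $\beta_\varphi$ at the origin collapses to $\sup_{p>0}\beta_\varphi(p) = f_*$ because $\beta_\varphi$ is monotone and pinned at $f_*$ at $0$. The only place requiring care is the bookkeeping with $+\infty$ (both $\varphi$ and $f$ may take the value $+\infty$) in the decomposition of $Q$ and in the phrase "for $c$ sufficiently large"; this causes no problem since on $\{\varphi = +\infty\}$ every penalized value is trivially $+\infty$, and $\delta > 0$, $c_0 > 0$ are fixed before $c$ is chosen.
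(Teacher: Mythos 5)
The paper does not actually prove this theorem: it is imported as an auxiliary result with a citation to \cite[Theorem~3.16]{Dolgopolik_ExPenII}, so there is no in-paper argument to compare against. Judged on its own, your proof is correct and self-contained. The key reduction -- that since $\beta_\varphi(0)=\inf_\Omega f=f_*$ and $\beta_\varphi$ is nonincreasing with $\beta_\varphi(p)\le f_*$, lower semicontinuity at the origin collapses to $\sup_{p>0}\beta_\varphi(p)=f_*$ -- is exactly the right observation, and both implications (the $\delta$-splitting of $Q$ combined with $F_c = F_{c_0}+(c-c_0)\varphi$ for sufficiency, and the bound $f(x)>f_*-\varepsilon-c\delta$ on the sublevel set $\{\varphi\le\delta\}$ for necessity) go through, including the extended-arithmetic bookkeeping, since $f$ never takes the value $-\infty$. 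The one inaccuracy is the sentence ``since $\Theta$ is nondecreasing, $\Theta(1)>-\infty$'': if the $c'$ with $\Theta(c')>-\infty$ happens to exceed $1$, monotonicity gives no lower bound on $\Theta(1)$. This does not damage the argument, because the theorem only asks for boundedness below for \emph{some} $c>0$, and $\Theta(c'+1)\ge\Theta(c')>-\infty$ does the job; just replace ``$c=1$'' by ``$c=c'+1$''.
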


In order to extend the localization principle to the case of well-posed infinite dimensional optimization problems we
need the following lemma on (approximate) minimizers of the penalty function. Various particular versions of this lemma 
are well-known and we include its proof only for the sake of completeness.

\begin{lemma} \label{lem:MinimizingSeq_PenaltyFunc}
Let a sequence $\{ \varepsilon_n \} \subset [0, + \infty)$ and a non-decreasing sequence 
$\{ c_n \} \subset (0, + \infty)$ be such that $\varepsilon_n \to 0$ and $c_n \to + \infty$ as $n \to \infty$ and 
the function $F_{c_0}$ is bounded below on $Q$. Then for any sequence $\{ x_n \} \subset Q$ of
$\varepsilon_n$-minimizers of $F_{c_n}$ on the set $Q$, that is,
\[
  F_{c_n}(x_n) \le \inf_{x \in Q} F_{c_n}(x) + \varepsilon_n \quad \forall n \in \mathbb{N},
\]
one has $\varphi(x_n) \to 0$ as $n \to \infty$ and
\begin{equation} \label{eq:PenMethod_ObjFuncLimits}
  \min\{ \liminf_{p \downarrow 0} \beta_{\varphi}(p), f_* \} 
  \le \liminf_{n \to \infty} f(x_n) \le \limsup_{n \to \infty} f(x_n) \le f_*,
\end{equation}
where $p \downarrow 0$ means that $p$ approaches $0$ from the right.
\end{lemma}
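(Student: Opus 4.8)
The plan is to establish the three stated facts in the order $\varphi(x_n)\to 0$, then the upper bound $\limsup f(x_n)\le f_*$, then the lower bound, exploiting the monotonicity of $c\mapsto F_c$ and the near-optimality of each $x_n$.

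\textbf{Step 1: $\varphi(x_n)\to 0$.} Pick any feasible $\bar x\in\Omega$, so that $F_{c_n}(\bar x)=f(\bar x)+c_n\varphi(\bar x)=f(\bar x)$ since $\varphi(\bar x)=0$. By the $\varepsilon_n$-minimizing property, $F_{c_n}(x_n)\le \inf_{x\in Q}F_{c_n}(x)+\varepsilon_n\le F_{c_n}(\bar x)+\varepsilon_n=f(\bar x)+\varepsilon_n$. Hence $f(x_n)+c_n\varphi(x_n)\le f(\bar x)+\varepsilon_n$. To turn this into control of $\varphi(x_n)$ I need a lower bound on $f(x_n)$; this comes from the hypothesis that $F_{c_0}$ is bounded below on $Q$, say $F_{c_0}(x)\ge b$ for all $x\in Q$. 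Since $c_n\ge c_0$ and $\{c_n\}$ is nondecreasing, write $F_{c_n}(x_n)=F_{c_0}(x_n)+(c_n-c_0)\varphi(x_n)\ge b+(c_n-c_0)\varphi(x_n)$. Combining, $b+(c_n-c_0)\varphi(x_n)\le f(\bar x)+\varepsilon_n$, so $\varphi(x_n)\le (f(\bar x)+\varepsilon_n-b)/(c_n-c_0)$ for $n$ large enough that $c_n>c_0$; since the numerator is bounded and $c_n\to+\infty$, this forces $\varphi(x_n)\to 0$.

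\textbf{Step 2: $\limsup_{n\to\infty} f(x_n)\le f_*$.} For any $\delta>0$ choose a feasible point $\bar x$ with $f(\bar x)\le f_*+\delta$. The chain from Step 1 gives $f(x_n)\le f(x_n)+c_n\varphi(x_n)=F_{c_n}(x_n)\le f(\bar x)+\varepsilon_n\le f_*+\delta+\varepsilon_n$. Letting $n\to\infty$ yields $\limsup f(x_n)\le f_*+\delta$, and since $\delta>0$ is arbitrary, $\limsup f(x_n)\le f_*$.

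\textbf{Step 3: the lower bound in \eqref{eq:PenMethod_ObjFuncLimits}.} Set $\ell:=\liminf_{n\to\infty}f(x_n)$ and pass to a subsequence (not relabeled) along which $f(x_n)\to\ell$. The key observation is that $x_n\in Q$ and $\varphi(x_n)\le p_n$ where $p_n:=\varphi(x_n)\to 0$ by Step 1, so by the definition of $\beta_\varphi$ we have $f(x_n)\ge \beta_\varphi(p_n)$. If infinitely many $p_n$ equal $0$, then along that further subsequence $x_n\in M\cap Q=\Omega$, so $f(x_n)\ge f_*$ and hence $\ell\ge f_*\ge\min\{\liminf_{p\downarrow 0}\beta_\varphi(p),f_*\}$. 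Otherwise $p_n>0$ for all large $n$ and $p_n\downarrow 0$ (after a monotone subsequence), so $\ell=\lim f(x_n)\ge\liminf_{n\to\infty}\beta_\varphi(p_n)\ge\liminf_{p\downarrow 0}\beta_\varphi(p)\ge\min\{\liminf_{p\downarrow 0}\beta_\varphi(p),f_*\}$. Either way the desired inequality holds. I expect the mild bookkeeping in Step 3 — separating the cases $p_n=0$ versus $p_n>0$ and extracting a monotone subsequence so that "$p\downarrow 0$" genuinely applies — to be the only slightly delicate point; Steps 1 and 2 are routine consequences of boundedness below and the definition of $\varepsilon_n$-minimizer.
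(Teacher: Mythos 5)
Your proof is correct and follows essentially the same route as the paper: the identity $F_{c_n}(x_n)=F_{c_0}(x_n)+(c_n-c_0)\varphi(x_n)$ together with the bound $F_{c_n}(x_n)\le f_*+\varepsilon_n$ for Step 1, the same one-line upper estimate, and the same feasible/infeasible case split via $f(x_n)\ge\beta_\varphi(\varphi(x_n))$ for the lower estimate. The only cosmetic difference is that you extract the explicit bound $\varphi(x_n)\le (f(\bar x)+\varepsilon_n-b)/(c_n-c_0)$ directly where the paper argues by contradiction, and your monotone-subsequence extraction in Step 3 is unnecessary (any positive sequence $p_{n_k}\to 0$ already gives $\liminf_k\beta_\varphi(p_{n_k})\ge\liminf_{p\downarrow 0}\beta_\varphi(p)$) but harmless.
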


\begin{proof}
Note that the penalty function $F_{c_n}$ is bounded below on $Q$ for any $n \in \mathbb{N}$, since the sequence 
$\{ c_n \}$ is non-decreasing, the function $c \mapsto F_c(x)$ is non-decreasing for any $x \in X$, and $F_{c_0}$ is 
bounded below on $Q$ by our assumption. Therefore, for any $n \in \mathbb{N}$ the notion of $\varepsilon_n$-minimizer
of $F_{c_n}$ on $Q$ is correctly defined.

Suppose by contradiction that the infeasibility measure $\varphi(x_n)$ does not converge to zero. Then there exist 
$\delta > 0$ and a subsequence $\{ x_{n_k} \}$ such that $\varphi(x_{n_k}) \ge \delta$ for all $k \in \mathbb{N}$.
Observe that
\begin{align*}
  F_{c_{n_k}}(x_{n_k}) = f(x_{n_k}) + c_{n_k} \varphi(x_{n_k}) 
  &= F_{c_0}(x_{n_k}) + (c_{n_k} - c_0) \varphi(x_{n_k}) 
  \\
  &\ge \inf_{x \in Q} F_{c_0}(x) + (c_{n_k} - c_0) \delta
\end{align*}
for any $k \in \mathbb{N}$. The right-hand side of this inequality increases unboundedly as $k \to \infty$. Therefore,
$F_{c_{n_k}}(x_{n_k}) \to + \infty$ as $k \to \infty$, which is impossible due to the facts that (i) 
$F_{c_n}(x_n) \le f_* + \varepsilon_n$ by the weak duality and the definition of $x_n$, and (ii) $\varepsilon_n \to 0$
as $n \to \infty$. Thus, $\varphi(x_n) \to 0$ as $n \to \infty$.

Let us now prove inequalities \eqref{eq:PenMethod_ObjFuncLimits}. As was noted above, 
$F_{c_n}(x_n) \le f_* + \varepsilon_n$, which implies that $f(x_n) \le f_* + \varepsilon_n$ and, therefore, the upper
estimate in \eqref{eq:PenMethod_ObjFuncLimits} holds true.

To prove the lower estimate, choose any sequence $\{ x_{n_k} \}$ such that
\[
  \liminf_{n \to \infty} f(x_n) = \lim_{k \to \infty} f(x_{n_k})
\]
(at least one such sequence exists by the definition of the lower limit). If there exists a subsequence of the sequence
$\{ x_{n_k} \}$, which we denote again by $\{ x_{n_k} \}$, that is feasible for the problem $(\mathcal{P})$, then
$f(x_{n_k}) \ge f_*$ for all $k \in \mathbb{N}$ and
\[
  \liminf_{n \to \infty} f(x_n) = \lim_{k \to \infty} f(x_{n_k}) \ge f_*,
\]
that is, the lower estimate in \eqref{eq:PenMethod_ObjFuncLimits} holds true. 

Let us now consider the case when there does not exists a subsequence of the sequence $\{ x_{n_k} \}$ that is feasible
for the problem $(\mathcal{P})$. Denote $p_n = \varphi(x_n)$ for all $n \in \mathbb{N}$. Without loss of generality one
can suppose that the subsequence $\{ x_{n_k} \}$ is infeasible and, therefore, $p_{n_k} = \varphi(x_{n_k}) > 0$ for all
$k \in \mathbb{N}$. In addition, $p_n = \varphi(x_n) \to 0$ as $n \to \infty$, as was shown above.

By the definition of the optimal value function $f(x_{n_k}) \ge \beta_{\varphi}(p_{n_k})$ for all $k \in \mathbb{N}$. 
Consequently, one has
\[
  \liminf_{n \to \infty} f(x_n) = \lim_{k \to \infty} f(x_{n_k}) 
  \ge \liminf_{k \to \infty} \beta_{\varphi}(p_{n_k})
  \ge \liminf_{p \downarrow 0} \beta_{\varphi}(p),
\]
where the last inequality follows from the fact that $p_{n_k} \to 0$ as $k \to \infty$. Thus, the lower estimate in
\eqref{eq:PenMethod_ObjFuncLimits} holds true in both cases.
\end{proof}

Now we can prove new necessary and sufficient conditions for the global exactness of penalty functions for well-posed
problems. To conveniently formulate these conditions we will use the notions of the least exact penalty parameter 
and the radius of local exactness introduced in Section~\ref{sect:ExactPenalty_Prelim}.

\begin{theorem} \label{thrm:Exactness_for_LevitinPolyakGeneralizedWP}
Let the problem $(\mathcal{P})$ be weakly Levitin-Polyak well-posed with respect to the infeasibility measure $\varphi$.
Then the penalty function $F_c$ is globally exact if and only if the following three conditions hold true:
\begin{enumerate}
\item{$F_c$ has the zero duality gap property;}

\item{there exists $c_0 \ge 0$ such that $F_c$ attains a global minimum on $Q$ for any $c \ge c_0$;}

\item{$F_c$ is uniformly locally exact at globally optimal solutions of the problem $(\mathcal{P})$, that is, $F_c$ is
locally exact at every point $x_* \in \argmin(\mathcal{P})$ and there exist $0 < c_* < + \infty$ and 
$0 < r_* < + \infty$ such that $c_*(x_*) < c_*$ and $r_*(x_*, c_*) \ge r_*$ for any $x_* \in \argmin(\mathcal{P})$.
}
\end{enumerate}
\end{theorem}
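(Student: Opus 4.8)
The plan is to prove the two directions separately, with the "only if" direction being essentially a consequence of standard facts (Proposition~\ref{prp:ExactnessViaOptValues}, Remark~\ref{rmrk:ExactnessImpliesZeroDualGap}, and the definition of local exactness), and the "if" direction being the substantive part. For the necessity direction, suppose $F_c$ is globally exact. Then condition~1 follows from Remark~\ref{rmrk:ExactnessImpliesZeroDualGap}. For condition~2, global exactness means that for $c$ large the globally optimal solutions of $(\mathcal{P})$ and of the penalized problem coincide, and since $\argmin(\mathcal{P}) \neq \emptyset$ and $\inf_{x\in Q} F_c(x) = f_*$ is attained at any such point, $F_c$ attains its global minimum on $Q$. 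For condition~3, global optimality of each $x_* \in \argmin(\mathcal{P})$ in the penalized problem is in particular local optimality, so $F_c$ is locally exact at every $x_*$ with $c_*(x_*) \le c_*(f,\varphi) =: c_*$; and since $F_c(x) \ge F_c(x_*) = f_*$ holds for \emph{all} $x \in Q$ once $c \ge c_*$, the radius of local exactness $r_*(x_*, c_*)$ is $+\infty$ for every $x_*$, so one may take $r_* = 1$, say. This dispatches necessity.

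For the sufficiency direction, assume conditions 1--3 hold and argue by contradiction: suppose $F_c$ is not globally exact. By Proposition~\ref{prp:ExactnessViaOptValues} and condition~1 this means $\inf_{x\in Q} F_c(x) = \Theta(c) < f_*$ for every $c \ge 0$ while $\Theta_* = \sup_{c\ge 0}\Theta(c) = f_*$; in particular $\Theta(c) \uparrow f_*$ but never reaches it. Pick a non-decreasing sequence $c_n \to +\infty$ with $c_n \ge c_0$ (the constant from condition~2), so that $F_{c_n}$ attains its minimum on $Q$ at some point $x_n \in Q$; take $\varepsilon_n = 0$ in Lemma~\ref{lem:MinimizingSeq_PenaltyFunc}. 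The lemma gives $\varphi(x_n) \to 0$ and, since condition~1 together with Theorem~\ref{thrm:ZeroDualGap_vs_LSC_OptValFunc} forces $\liminf_{p\downarrow 0}\beta_\varphi(p) \ge \beta_\varphi(0) = f_*$, the chain \eqref{eq:PenMethod_ObjFuncLimits} collapses to $f(x_n) \to f_*$. Thus $\{x_n\} \subset Q \subset X$ is a sequence with $f(x_n) \to f_*$ and $\varphi(x_n) \to 0$, so by weak Levitin-Polyak well-posedness with respect to $\varphi$ we get $\dist(x_n, \argmin(\mathcal{P})) \to 0$. Hence for all large $n$ there is $x_n^* \in \argmin(\mathcal{P})$ with $d(x_n, x_n^*) < r_*$, where $r_*$ is the uniform radius of local exactness from condition~3. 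Applying condition~3 at $x_n^*$ with the uniform parameter $c_*$: for $c_n \ge c_*$ (true for large $n$) we have $F_{c_n}(x_n) \ge F_{c_n}(x_n^*) = f(x_n^*) = f_*$, because $x_n \in B(x_n^*, r_*) \cap Q$ and $r_*(x_n^*, c_*) \ge r_*$. But $x_n$ minimizes $F_{c_n}$ over $Q$, so $\Theta(c_n) = F_{c_n}(x_n) \ge f_*$, contradicting $\Theta(c) < f_*$ for all $c$. Therefore $F_c$ is globally exact.

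The main obstacle is making sure the \emph{uniformity} in condition~3 is actually used correctly: the points $x_n^*$ vary with $n$, so one genuinely needs a single radius $r_*$ and a single parameter $c_*$ valid at every $x_* \in \argmin(\mathcal{P})$ — without uniformity, the radius $r_*(x_n^*, c_*)$ could shrink to zero along the sequence and the argument would fail. A secondary point worth a line of care is the reduction of \eqref{eq:PenMethod_ObjFuncLimits} to $f(x_n)\to f_*$: one must invoke Theorem~\ref{thrm:ZeroDualGap_vs_LSC_OptValFunc} to get lower semicontinuity of $\beta_\varphi$ at the origin, i.e. $\liminf_{p\downarrow 0}\beta_\varphi(p) \ge \beta_\varphi(0) = f_*$, which makes $\min\{\liminf_{p\downarrow 0}\beta_\varphi(p), f_*\} = f_*$ so the lower and upper bounds in the lemma coincide. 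Everything else is bookkeeping: checking that $c_0$ from condition~2 can be absorbed into the choice of $\{c_n\}$, that $\varepsilon_n = 0$ is a legitimate choice in the lemma, and that "$F_c$ attains a global minimum" is exactly what licenses picking the minimizers $x_n$ rather than mere approximate minimizers.
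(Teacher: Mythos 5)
Your proposal is correct and follows essentially the same route as the paper: the necessity part is handled identically (with the radius of local exactness being $+\infty$ for an exact penalty function), and the sufficiency part uses the same chain (exact minimizers $x_n$ of $F_{c_n}$, Lemma~\ref{lem:MinimizingSeq_PenaltyFunc} with $\varepsilon_n=0$, Theorem~\ref{thrm:ZeroDualGap_vs_LSC_OptValFunc} to collapse \eqref{eq:PenMethod_ObjFuncLimits} to $f(x_n)\to f_*$, weak Levitin--Polyak well-posedness to push $x_n$ within the uniform radius $r_*$ of some $x_n^*\in\argmin(\mathcal{P})$, and uniform local exactness to conclude $\inf_{x\in Q}F_{c_n}(x)\ge f_*$). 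The only difference is that you phrase the last step as a contradiction with $\Theta(c)<f_*$ while the paper concludes directly via Proposition~\ref{prp:ExactnessViaOptValues}; this is purely cosmetic.
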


\begin{proof}
If the penalty function $F_c$ is globally exact, then, as was noted in Remark~\ref{rmrk:ExactnessImpliesZeroDualGap}, 
it has the zero duality gap property. Furthermore, it is uniformly locally exact at every globally optimal solution of
the problem $(\mathcal{P})$ with any $c_* > c_*(f, \varphi)$ and any $r_* > 0$ by the definition of the global
exactness. In addition, for any $c > c_*(f, \varphi)$ the penalty function $F_c$ attains a global minimum on the set $Q$
at any globally optimal solution of the problem $(\mathcal{P})$.

Let us prove the converse statement. Suppose that the penalty function satisfies the three conditions from the 
formulation of the theorem. Choose any increasing unbounded sequence $\{ c_n \} \subset [0, + \infty)$ with $c_0 \ge 0$
from the second condition. By this condition for any $n \in \mathbb{N}$ the function $F_{c_n}$ attains a global minimum
on $Q$ at some point $x_n$.

By Lemma~\ref{lem:MinimizingSeq_PenaltyFunc} one has $\varphi(x_n) \to 0$ as $n \to \infty$ and inequalities
\eqref{eq:PenMethod_ObjFuncLimits} hold true. Moreover, by Theorem~\ref{thrm:ZeroDualGap_vs_LSC_OptValFunc} and the
first condition of the theorem inequalities \eqref{eq:PenMethod_ObjFuncLimits} imply that $f(x_n) \to f_*$ as 
$n \to \infty$. Consequently, by the weak Levitin-Polyak well-posedness with respect to $\varphi$ one has 
$\dist(x_n, \argmin(\mathcal{P})) \to 0$ as $n \to \infty$. Hence, in particular, there exists $n_0 \in \mathbb{N}$ 
such that
\[
  \dist(x_n, \argmin(\mathcal{P})) < r_*, \quad c_n \ge c_* \quad \forall n \ge n_0,
\]
where $c_* > 0$ and $r_* > 0$ are from the definition of the uniform local exactness (i.e. the third condition). Here
we used the fact that $\{ c_n \}$ is an increasing unbounded sequence.

Thus, for any $n \ge n_0$ there exists $x_* \in \argmin(\mathcal{P})$ such that $c_n > c_*(x_*)$ and 
$\| x_n - x_* \| < r_*(x_*, c_n)$, which by definitions means that 
\[
  \inf_{x \in Q} F_{c_n}(x) = F_{c_n}(x_n) \ge F_{c_n}(x_*) = f(x_*) = f_*.
\]
Therefore, the penalty function $F_c$ is globally exact by Proposition~\ref{prp:ExactnessViaOptValues}.
\end{proof}

\begin{corollary}[localization principle for penalty functions I]
Let the problem $(\mathcal{P})$ be Levitin-Polyak well-posed with respect to the infeasibility measure $\varphi$. Then
the penalty function $F_c$ is globally exact if and only if the following three conditions hold true:
\begin{enumerate}
\item{$F_c$ has the zero duality gap property;}

\item{there exists $c_0 \ge 0$ such that $F_c$ attains a global minimum on $Q$ for any $c \ge c_0$;}

\item{$F_c$ is locally exact at every globally optimal solution of the problem $(\mathcal{P})$.}
\end{enumerate}
\end{corollary}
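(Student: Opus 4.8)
The plan is to deduce this corollary from Theorem~\ref{thrm:Exactness_for_LevitinPolyakGeneralizedWP} by exploiting Proposition~\ref{prp:GenLevitinPolyakWellPosedness_2Versions}, which says that Levitin-Polyak well-posedness with respect to $\varphi$ is equivalent to the weak version plus compactness of $\argmin(\mathcal{P})$. So throughout the argument I may assume $\argmin(\mathcal{P})$ is a nonempty compact set and that the weak Levitin-Polyak well-posedness with respect to $\varphi$ holds, hence Theorem~\ref{thrm:Exactness_for_LevitinPolyakGeneralizedWP} is available.

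For the nontrivial ("if") direction, I would start from the three hypotheses: zero duality gap, eventual attainment of the global minimum of $F_c$ on $Q$, and \emph{plain} local exactness at every $x_* \in \argmin(\mathcal{P})$. The only gap between these and the hypotheses of Theorem~\ref{thrm:Exactness_for_LevitinPolyakGeneralizedWP} is that the third condition there demands \emph{uniform} local exactness, i.e. a single pair $(c_*, r_*)$ with $c_*(x_*) < c_*$ and $r_*(x_*, c_*) \ge r_*$ valid for all $x_* \in \argmin(\mathcal{P})$ simultaneously. The heart of the proof is therefore to upgrade pointwise local exactness to uniform local exactness using compactness of $\argmin(\mathcal{P})$.

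To do this I would argue as follows. Fix a finite $\bar c$ larger than, say, $c_*(x_*)+1$ — wait, this cannot be done pointwise. Instead: for each $x_* \in \argmin(\mathcal{P})$ pick $c(x_*) > c_*(x_*)$ finite; then the point $x_*$ is a local minimizer of $F_{c(x_*)}$ on $Q$, so there is a radius $\rho(x_*) := r_*(x_*, c(x_*)) > 0$ such that $F_{c(x_*)}(x) \ge F_{c(x_*)}(x_*) = f_*$ for all $x \in B(x_*, \rho(x_*)) \cap Q$. The balls $\{ B(x_*, \rho(x_*)/2) \mid x_* \in \argmin(\mathcal{P}) \}$ cover the compact set $\argmin(\mathcal{P})$, so finitely many of them, centred at $x_*^1, \dots, x_*^N$ with radii $\rho_i := \rho(x_*^i)$, already cover it. Put $c_* := 1 + \max_{1 \le i \le N} c(x_*^i)$ and $r_* := \tfrac12 \min_{1 \le i \le N} \rho_i$; both are finite and positive. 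Now take any $x_* \in \argmin(\mathcal{P})$; it lies in some $B(x_*^i, \rho_i/2)$. For any $c \ge c_*$ (hence $c \ge c(x_*^i)$) and any $x \in B(x_*, r_*) \cap Q$ one has $d(x, x_*^i) \le d(x, x_*) + d(x_*, x_*^i) \le r_* + \rho_i/2 \le \rho_i$, so $F_c(x) \ge F_{c(x_*^i)}(x) \ge f_* = f(x_*) = F_c(x_*)$, using monotonicity of $c \mapsto F_c(x)$ and $F_c(x_*) = f(x_*)$ since $x_* \in M$. This shows $x_*$ is a local minimizer of $F_c$ on $Q$ with radius at least $r_*$ for every $c \ge c_*$; in particular $c_*(x_*) \le c_* < +\infty$ and $r_*(x_*, c_*) \ge r_* > 0$, uniformly in $x_*$. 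That is precisely the uniform local exactness required as the third hypothesis of Theorem~\ref{thrm:Exactness_for_LevitinPolyakGeneralizedWP}, so global exactness of $F_c$ follows from that theorem.

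For the converse ("only if") direction, global exactness trivially implies all three listed conditions: the zero duality gap by Remark~\ref{rmrk:ExactnessImpliesZeroDualGap}, attainment of the global minimum of $F_c$ on $Q$ for $c > c_*(f,\varphi)$ at any point of $\argmin(\mathcal{P})$ (which is nonempty since the problem is Levitin-Polyak well-posed), and local exactness at every globally optimal solution (indeed even uniformly, with any $r_* > 0$ and any $c_* > c_*(f,\varphi)$). I expect the main obstacle to be purely a matter of careful bookkeeping in the finite-subcover step — making sure the halved radii and the "+1" shift in $c_*$ are arranged so that an arbitrary $x_* \in \argmin(\mathcal{P})$, not just the finitely many centres, inherits the uniform bounds via the triangle inequality and the monotonicity of $c \mapsto F_c(x)$; there is no genuine analytic difficulty beyond that.
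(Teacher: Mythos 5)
Your proposal is correct and follows exactly the paper's own route: invoke Proposition~\ref{prp:GenLevitinPolyakWellPosedness_2Versions} to get weak Levitin--Polyak well-posedness plus compactness of $\argmin(\mathcal{P})$, use compactness to upgrade pointwise local exactness to \emph{uniform} local exactness, and then apply Theorem~\ref{thrm:Exactness_for_LevitinPolyakGeneralizedWP}; the paper merely states that this upgrade "can readily be checked," whereas you supply the finite-subcover argument explicitly. The only cosmetic point is that $r_*(x_*,c)$ is a supremum, so the defining inequality is only guaranteed for radii strictly below it; taking $\rho(x_*)$ to be, say, half of $r_*(x_*,c(x_*))$ fixes this without changing anything else.
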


\begin{proof}
By Proposition~\ref{prp:GenLevitinPolyakWellPosedness_2Versions} the problem $(\mathcal{P})$ is weakly Levitin-Polyak
well-posed with respect to the infeasibility measure $\varphi$ and the set $\argmin(\mathcal{P})$ is compact. With the
use of the compactness of this set and the last assumption of the corollary one can readily check that $F_c$ is
\textit{uniformly} locally exact at globally optimal solutions of the problem $(\mathcal{P})$. Hence applying
Theorem~\ref{thrm:Exactness_for_LevitinPolyakGeneralizedWP} one obtains the required result.
\end{proof}

In the case when $X$ is a reflexive Banach space, one can prove a significantly stronger version of 
Theorem~\ref{thrm:Exactness_for_LevitinPolyakGeneralizedWP}, which can be viewed as a direct extension of the
localization principle to the case of infinite dimensional well-posed constrained optimization problems (cf. the finite
dimensional version of this principle in \cite{Dolgopolik_ExPenI,Dolgopolik_ExPenII}).

\begin{theorem}[localization principle for penalty functions II] \label{thrm:LocPrinciple_ExPen_ReflCase}
Let $X$ be a reflexive Banach space, the set $Q$ be weakly sequentially closed (in particular, closed and convex), 
the functions $f$ and $\varphi$ be weakly sequentially lsc on the set $Q$, and the problem $(\mathcal{P})$ be weakly
Levitin-Polyak well-posed with respect to the infeasibility measure $\varphi$. Then the penalty function $F_c$ is
globally exact if and only if the following two conditions hold true:
\begin{enumerate}
\item{there exists $c_0 \ge 0$ such that the set $S_{c_0}(f_*) := \{ x \in Q \mid F_{c_0}(x) < f_* \}$ is either bounded
or empty;}

\item{$F_c$ is uniformly locally exact at globally optimal solutions of the problem $(\mathcal{P})$.}
\end{enumerate}
\end{theorem}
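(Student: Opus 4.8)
The plan is to prove the equivalence by reducing the substantive (sufficiency) direction to Theorem~\ref{thrm:Exactness_for_LevitinPolyakGeneralizedWP}: the reflexivity of $X$ together with the weak sequential lower semicontinuity assumptions will let me derive, from conditions~1--2, the two hypotheses of that theorem that are not listed here, namely that $F_c$ attains a global minimum on $Q$ for large $c$ and that $F_c$ has the zero duality gap property. The necessity direction is immediate and copies the corresponding argument in Theorem~\ref{thrm:Exactness_for_LevitinPolyakGeneralizedWP}: if $F_c$ is globally exact then by Proposition~\ref{prp:ExactnessViaOptValues} there is $c_{**}\ge 0$ with $\inf_{x\in Q}F_c(x)=f_*$ for all $c\ge c_{**}$, so $F_c\ge f_*$ on $Q$; hence $S_c(f_*)=\emptyset$ for every $c\ge c_{**}$ (condition~1 with $c_0=c_{**}$), and for every $x_*\in\argmin(\mathcal{P})$ the inequality $F_c(x_*)=f_*\le F_c(x)$ holds on all of $Q$ whenever $c\ge c_{**}$, so $F_c$ is locally exact at $x_*$ with $c_*(x_*)\le c_{**}$ and $r_*(x_*,c_{**})=+\infty$; thus $F_c$ is uniformly locally exact at $\argmin(\mathcal{P})$ with, say, $c_*=c_{**}+1$ and $r_*=1$ (condition~2).

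For sufficiency, fix $c_0$ as in condition~1. First I would record the basic consequence of the standing hypotheses: any function that is weakly sequentially lsc on $Q$ (in particular $f$ and each $F_c$) is bounded below on every bounded subset of $Q$, since a minimizing sequence in a bounded set of the reflexive space $X$ has a weakly convergent subsequence whose limit lies in $Q$ (weak sequential closedness) and lower semicontinuity rules out the value $-\infty$. Applying this to $S_{c_0}(f_*)$, which is bounded by condition~1, and noting that $F_{c_0}\ge f_*$ on $Q\setminus S_{c_0}(f_*)$, I get that $F_{c_0}$, and hence $F_c$ for every $c\ge c_0$, is bounded below on $Q$. Next, for any $c\ge c_0$ let $m=\inf_{x\in Q}F_c(x)$; by weak duality $m\le f_*$. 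If $m=f_*$, the minimum is attained at any $x_*\in\argmin(\mathcal{P})$, which is nonempty by the well-posedness hypothesis, because $F_c(x_*)=f(x_*)=f_*$. If $m<f_*$, fix $\lambda\in(m,f_*)$: the set $A=\{x\in Q\mid F_c(x)\le\lambda\}$ is nonempty, it is contained in $S_{c_0}(f_*)$ (because $F_c\ge F_{c_0}$ and $\lambda<f_*$) hence bounded, and it is weakly sequentially closed since $F_c$ is weakly sequentially lsc on the weakly sequentially closed set $Q$; by reflexivity $A$ is weakly sequentially compact, so $F_c$ attains its infimum $m$ on $A$, and therefore on $Q$. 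This verifies the attainment hypothesis of Theorem~\ref{thrm:Exactness_for_LevitinPolyakGeneralizedWP} for all $c\ge c_0$.

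It remains to establish the zero duality gap property, which by Theorem~\ref{thrm:ZeroDualGap_vs_LSC_OptValFunc} (given the boundedness below just obtained) reduces to showing that $\beta_\varphi$ is lsc at the origin; since $\beta_\varphi(0)=f_*$ and $\beta_\varphi$ is nonincreasing, this means $\liminf_{p\downarrow 0}\beta_\varphi(p)\ge f_*$. If this fails, there are $\varepsilon>0$, a sequence $p_k\downarrow 0$, and points $y_k\in Q$ with $\varphi(y_k)\le p_k$ and $f(y_k)<f_*-\varepsilon$; then $F_{c_0}(y_k)\le f(y_k)+c_0 p_k<f_*$ for large $k$, so $y_k\in S_{c_0}(f_*)$, which is bounded, and extracting a weakly convergent subsequence $y_{k_j}\rightharpoonup\bar y\in Q$ one gets $\varphi(\bar y)\le\liminf_j\varphi(y_{k_j})=0$ and $f(\bar y)\le\liminf_j f(y_{k_j})\le f_*-\varepsilon$; but $\bar y\in Q$ with $\varphi(\bar y)=0$ forces $\bar y\in M$, hence $\bar y\in\Omega$ and $f(\bar y)\ge f_*$, a contradiction. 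Thus all three hypotheses of Theorem~\ref{thrm:Exactness_for_LevitinPolyakGeneralizedWP} hold --- the zero duality gap property, attainment of a global minimum for $c\ge c_0$, and uniform local exactness at $\argmin(\mathcal{P})$ (our condition~2) --- and since $(\mathcal{P})$ is weakly Levitin--Polyak well-posed with respect to $\varphi$ by hypothesis, that theorem yields the global exactness of $F_c$. The crux of the argument, and the step I expect to require the most care, is precisely this use of reflexivity and weak sequential lower semicontinuity both to produce global minimizers of $F_c$ and to trap any (near-)minimizing sequence whose objective value dips below $f_*$ inside the bounded set $S_{c_0}(f_*)$, so that a feasible weak limit can be extracted to contradict sub-optimality.
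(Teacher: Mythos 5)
Your proof is correct, and the necessity direction is identical to the paper's. For sufficiency you take a genuinely different decomposition: you derive all three hypotheses of Theorem~\ref{thrm:Exactness_for_LevitinPolyakGeneralizedWP} from conditions~1--2 and then invoke that theorem as a black box. Concretely, you get boundedness below and attainment by running the direct method on sublevel sets trapped inside the bounded set $S_{c_0}(f_*)$, and you get the zero duality gap property via Theorem~\ref{thrm:ZeroDualGap_vs_LSC_OptValFunc} by trapping near-minimizers of the perturbed problem in $S_{c_0}(f_*)$ and extracting a feasible weak limit to contradict $f(\bar y)\le f_*-\varepsilon$. The paper instead argues directly: it establishes attainment the same way, takes actual global minimizers $x_n$ of $F_{c_n}$, observes $x_n\in S_{c_n}(f_*)\subseteq S_{c_0}(f_*)$ (after disposing of the case where some $S_c(f_*)$ is empty), and applies the weak-compactness/lower-semicontinuity argument once to this single sequence, obtaining $\varphi(x_{n_k})\to 0$ from Lemma~\ref{lem:MinimizingSeq_PenaltyFunc} and $f(x_{n_k})\to f_*$ from $f(x_n)<f_*$ combined with feasibility of the weak limit; it never establishes the zero duality gap property or invokes Theorem~\ref{thrm:ZeroDualGap_vs_LSC_OptValFunc}. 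Both proofs rest on the same mechanism --- reflexivity, weak sequential lower semicontinuity, and boundedness of $S_{c_0}(f_*)$ --- but your version buys a clean modular reduction to the earlier abstract theorem at the cost of one extra compactness argument (for the lower semicontinuity of $\beta_\varphi$ at the origin), while the paper's version buys economy by reusing only the tail of the earlier proof. As a minor point, your reduction also makes explicit that under the hypotheses of this theorem the zero duality gap property is automatic, which the paper leaves implicit.
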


\begin{proof}
If $F_c$ is globally exact, then the set $S_c(f_*)$ is empty for any $c \ge c_*(f, \varphi)$ by
Proposition~\ref{prp:ExactnessViaOptValues} and, as was shown in the proof of
Theorem~\ref{thrm:Exactness_for_LevitinPolyakGeneralizedWP}, $F_c$ is uniformly locally exact at globally optimal 
solutions of the problem $(\mathcal{P})$. Let us prove the converse statement.

Note that $S_c(f_*) \subseteq S_t(f_*)$ for any $c \ge t \ge 0$ due to the fact that the function $c \mapsto F_c(x)$ is
nondecreasing for any $x \in X$. Let $c_0$ be from the formulation of the theorem. If there exists $c \ge c_0$ such that
the set $S_c(f_*) \subseteq S_{c_0}(f_*)$ is empty, then $\inf_{x \in Q} F_c(x) = f_*$ and the penalty function $F_c$ is
globally exact by Proposition~\ref{prp:ExactnessViaOptValues}. Therefore, one can suppose that the set $S_c(f_*)$ is
nonempty and bounded for any $c \ge c_0$.

Since the set $S_c(f_*)$ is nonempty and bounded for any $c \ge c_0$, the function $F_c$ attains a global minimum on 
the set $Q$ for any $c \ge c_0$ due to the facts that it is weakly sequentially lsc and the space $X$ is reflexive by 
the assumptions of the theorem. Choose an increasing unbounded sequence $\{ c_n \} \subset (0, + \infty)$ and let $x_n$
be a point of global minimum of $F_{c_n}$ on $Q$. Clearly, $x_n \in S_{c_n}(f_*)$ for any $n \in \mathbb{N}$.

By Lemma~\ref{lem:MinimizingSeq_PenaltyFunc} one has $\varphi(x_n) \to 0$ as $n \to \infty$. In addition,
$x_n \in S_{c_n}(f_*) \subseteq S_{c_0}(f_*)$ for any $n \in \mathbb{N}$, which implies that the sequence $\{ x_n \}$ is
bounded. Hence with the use of the reflexivity of the space $X$ one can extract a subsequence $\{ x_{n_k} \}$ weakly
converging to some $x_* \in Q$ (recall that the set $Q$ is weakly sequentially closed). Moreover, by virtue of the weak
sequential lower semicontinuity assumptions one has
\[
  0 = \liminf_{k \to \infty} \varphi(x_{n_k}) \ge \varphi(x_*), \quad
  \liminf_{k \to \infty} f(x_{n_k}) \ge f(x_*).
\]
Consequently, the point $x_*$ is feasible for the problem $(\mathcal{P})$ and $f(x_*) \ge f_*$. On the other hand,
observe that $f(x_n) < f_*$ due to the fact that $x_n \in S_{c_n}(f_*)$. Therefore, 
$\limsup_{n \to \infty} f(x_n) \le f_*$ and $f(x_{n_k}) \to f_*$ as $k \to \infty$.

Thus, the subsequence $\{ x_{n_k} \}$ satisfies the two following conditions: 
\begin{equation} \label{eq:LevitinPolyakSubseq}
  \lim_{k \to \infty} \varphi(x_{n_k}) = 0, \quad \lim_{k \to \infty} f(x_{n_k}) = f_*.
\end{equation}
Hence by applying the fact that the problem $(\mathcal{P})$ is weakly Levitin-Polyak well-posed with respect to the
infeasibility measure $\varphi$ one can conclude that $\dist(x_{n_k}, \argmin(\mathcal{P})) \to 0$ as $k \to \infty$.
Now arguing in the same way as in the proof of Theorem~\ref{thrm:Exactness_for_LevitinPolyakGeneralizedWP} and utilising
the uniform local exactness assumption one can readily prove the required result.
\end{proof}

\begin{remark}
One can provide many simple sufficient conditions for the set $S_c(f_*)$ to be either bounded or empty that can be
easily verified in many particular cases. For example, if one of the following conditions is satisfied:
\begin{enumerate}
\item{the set $Q$ is bounded,}

\item{the function $f$ is coercive on the set $Q$, that is, $f(x_n) \to + \infty$ for any sequence $\{ x_n \} \subset Q$
such that $\| x_n \| \to + \infty$ as $n \to \infty$,}

\item{the function $\varphi$ is coercive on $Q$,}

\item{the penalty function $F_c$ is coercive on $Q$ for some $c > 0$,}
\end{enumerate}
then the set $S_c(f_*)$ is either bounded or empty.
\end{remark}

Let us now turn to the case of problems that are Levitin-Polyak well-posed in the extended sense. Our aim is to show
that the localization principle admits an infinite dimensional extension to this class of problems as well. In order to
provide such extension we will need the following definition of \textit{nondegeneracy} of a penalty function introduced
by the author in \cite{Dolgopolik_ExPenI}.

\begin{definition}
{(i)~The penalty function $F_c$ is called \textit{nondegenerate}, if there exists $c_0 \ge 0$ such that for any 
$c \ge c_0$ the function $F_c$ attains a global minimum on the set $Q$ and there exists 
$x(c) \in \argmin_{x \in Q} F_c(x)$ such that the set $\{ x(c) \mid c \ge c_0 \}$ is bounded.
}

{(ii)~The penalty function $F_c$ is called \textit{strongly nondegenerate}, if there exists $c_0 \ge 0$ such that for
any $c \ge c_0$ the function $F_c$ attains a global minimum on the set $Q$ and there exists 
$x(c) \in \argmin_{x \in Q} F_c(x)$ such that the set $\{ x(c) \mid c \ge c_0 \}$ is bounded and 
$\dist(x(c), \Omega) \to 0$ as $c \to \infty$.
}

{(iii)~The penalty function $F_c$ is called \textit{nondegenerate in the extended sense}, if there exists $c_0 \ge 0$
such that for any $c \ge c_0$ the function $F_c$ attains a global minimum on the set $Q$ and there exists 
$x(c) \in \argmin_{x \in Q} F_c(x)$ such that $\dist(x(c), \Omega) \to 0$ as $c \to \infty$.
}
\end{definition}

\begin{remark}
{(i)~Some sufficient conditions for a penalty function function to be (strongly) nondegenerate can be found in
\cite{Dolgopolik_ExPenI}. See also \cite{Dolgopolik_ExPenI} for some examples showing that a nondegenerate penalty 
function might not be strongly nondegenerate in the general case.
}

\noindent{(ii)~One can easily verify that in Theorem~\ref{thrm:LocPrinciple_ExPen_ReflCase} the assumption that the set
$S_c(f_*)$ is either bounded or empty for any $c$ greater than some $c_0 \ge 0$ can be replaced by the assumption that
the penalty function $F_c$ is nondegenerate (see two different versions of the localization principle in 
\cite{Dolgopolik_ExPenII,Dolgopolik_UnifedI} for more details).
}
\end{remark}

As was shown in \cite{Dolgopolik_ExPenI} and can be readily verified directly, the strong nondegeneracy of a penalty 
function is a necessary condition for the global exactness of a penalty function and a necessary condition for 
the validity of the localization principle, but by itself the strong nondegeneracy is not sufficient to guarantee
neither global exactness nor the validity of the localization principle. Let us show that it becomes a necessary and
sufficient condition for the validity of the localization principle for problems that are Levitin-Polyak well-posed in
the extended sense.

\begin{theorem} \label{thrm:Exactness_for_LevitinPolyakExtendedWP}
Let the problem $(\mathcal{P})$ be weakly Levitin-Polyak well-posed in the extended sense. Then the penalty function
$F_c$ is globally exact if and only if the following three conditions hold true:
\begin{enumerate}
\item{$F_c$ has the zero duality gap property;}

\item{$F_c$ is non-degenerate in the extended sense;}

\item{$F_c$ is uniformly locally exact at globally optimal solutions of the problem $(\mathcal{P})$.}
\end{enumerate}
\end{theorem}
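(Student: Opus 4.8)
The plan is to mirror the structure of the proof of Theorem~\ref{thrm:Exactness_for_LevitinPolyakGeneralizedWP}, replacing the role played there by the ``attains a global minimum for all $c\ge c_0$'' assumption with the nondegeneracy in the extended sense, and using Levitin--Polyak well-posedness \emph{in the extended sense} (i.e. with respect to $\dist(\cdot,\Omega)$ along possibly infeasible sequences) rather than well-posedness with respect to $\varphi$. The forward implication is immediate: if $F_c$ is globally exact then by Remark~\ref{rmrk:ExactnessImpliesZeroDualGap} it has the zero duality gap property; by Proposition~\ref{prp:ExactnessViaOptValues} for every $c>c_*(f,\varphi)$ one has $\inf_{x\in Q}F_c(x)=f_*$ attained at every point of $\argmin(\mathcal{P})$, so $F_c$ attains its global minimum on $Q$ and one may take $x(c)\in\argmin(\mathcal{P})\subset\Omega$, giving $\dist(x(c),\Omega)=0$; hence $F_c$ is nondegenerate in the extended sense. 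Uniform local exactness at globally optimal solutions follows exactly as in the proof of Theorem~\ref{thrm:Exactness_for_LevitinPolyakGeneralizedWP}, by taking any $c_*>c_*(f,\varphi)$ and any $r_*>0$.

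For the converse, I would first fix $c_0\ge 0$ and the selection $c\mapsto x(c)\in\argmin_{x\in Q}F_c(x)$ provided by the nondegeneracy in the extended sense, so that $\dist(x(c),\Omega)\to 0$ as $c\to\infty$. Choose any increasing unbounded sequence $\{c_n\}\subset(c_0,+\infty)$ and set $x_n:=x(c_n)$; these are genuine (exact) minimizers of $F_{c_n}$ on $Q$, hence in particular $0$-minimizers, so Lemma~\ref{lem:MinimizingSeq_PenaltyFunc} applies and yields inequalities \eqref{eq:PenMethod_ObjFuncLimits}. Combining those inequalities with the zero duality gap property and Theorem~\ref{thrm:ZeroDualGap_vs_LSC_OptValFunc} (which gives lower semicontinuity of $\beta_\varphi$ at the origin, hence $\liminf_{p\downarrow 0}\beta_\varphi(p)\ge\beta_\varphi(0)=f_*$), one concludes $f(x_n)\to f_*$. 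Together with $\dist(x_n,\Omega)\to 0$, which is exactly the content of nondegeneracy in the extended sense, the sequence $\{x_n\}$ satisfies the hypotheses of the definition of weak Levitin--Polyak well-posedness in the extended sense, so $\dist(x_n,\argmin(\mathcal{P}))\to 0$.

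From here the argument is verbatim the endgame of Theorem~\ref{thrm:Exactness_for_LevitinPolyakGeneralizedWP}: let $c_*>0$ and $r_*>0$ be the constants from the uniform local exactness assumption; since $\dist(x_n,\argmin(\mathcal{P}))\to 0$ and $c_n\uparrow+\infty$, there is $n_0$ with $\dist(x_n,\argmin(\mathcal{P}))<r_*$ and $c_n\ge c_*$ for all $n\ge n_0$, so for each such $n$ one can pick $x_*\in\argmin(\mathcal{P})$ with $c_n>c_*(x_*)$ and $\|x_n-x_*\|<r_*(x_*,c_n)$ (using $r_*(x_*,c_n)\ge r_*(x_*,c_*)\ge r_*$ by monotonicity of the radius of local exactness in the penalty parameter), whence $\inf_{x\in Q}F_{c_n}(x)=F_{c_n}(x_n)\ge F_{c_n}(x_*)=f(x_*)=f_*$. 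Combined with weak duality $\inf_{x\in Q}F_{c_n}(x)\le f_*$, this gives $\inf_{x\in Q}F_{c_n}(x)=f_*$ for large $n$, and Proposition~\ref{prp:ExactnessViaOptValues} yields global exactness. The only subtle point, and the one I would be most careful about, is the passage from \eqref{eq:PenMethod_ObjFuncLimits} to $f(x_n)\to f_*$: one needs the zero duality gap to ensure $\liminf_{p\downarrow 0}\beta_\varphi(p)\ge f_*$ so that the lower bound $\min\{\liminf_{p\downarrow 0}\beta_\varphi(p),f_*\}$ equals $f_*$; here one also implicitly uses that the zero duality gap property forces $F_{c}$ to be bounded below on $Q$ for some $c>0$ (part of the statement of Theorem~\ref{thrm:ZeroDualGap_vs_LSC_OptValFunc}), which is anyway guaranteed by nondegeneracy since $F_c$ attains its minimum on $Q$ for $c\ge c_0$.
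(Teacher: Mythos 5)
Your proposal is correct and follows essentially the same route as the paper: necessity verified directly (with $x(c)$ taken to be a fixed globally optimal solution), and sufficiency by applying Lemma~\ref{lem:MinimizingSeq_PenaltyFunc} together with Theorem~\ref{thrm:ZeroDualGap_vs_LSC_OptValFunc} to the minimizers $x(c_n)$ supplied by nondegeneracy in the extended sense, then invoking weak Levitin--Polyak well-posedness and finishing exactly as in Theorem~\ref{thrm:Exactness_for_LevitinPolyakGeneralizedWP}. The extra care you take over the passage from \eqref{eq:PenMethod_ObjFuncLimits} to $f(x_n)\to f_*$ and over the monotonicity of the radius of local exactness is sound and only makes explicit what the paper leaves implicit.
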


\begin{proof}
The necessity of the three conditions from the formulation of the theorem for the global exactness of the penalty
function can be readily verified directly (to check the nondegeneracy in the extended sense choose any 
$c_0 > c_*(f, \varphi)$ and set $x(c) = x_*$ for an $c \ge c_0$ and any fixed globally optimal solution $x_*$ of 
the problem $(\mathcal{P})$).

Let us show the sufficiency of the three conditions for the global exactness of the penalty function. Choose an
increasing unbounded sequence $\{ c_n \} \subset [0, + \infty)$, where $c_0 \ge 0$ is from the definition of
nondegeneracy in the extended sense. By this definition there exists a sequence $\{ x_n \}$ such that $x_n$ is a point
of global minimum of $F_{c_n}$ on $Q$ and $\dist(x_n, \Omega) \to 0$ as $n \to \infty$. Furthermore, from
Lemma~\ref{lem:MinimizingSeq_PenaltyFunc}, the fact that $F_c$ has the zero duality gap property, and
Theorem~\ref{thrm:ZeroDualGap_vs_LSC_OptValFunc} it follows that $f(x_n) \to f_*$ as $n \to \infty$. Therefore, by 
the weak Levitin-Polyak well-posedness in the extended sense one has $\dist(x_n, \argmin(\mathcal{P})) \to 0$ as 
$n \to \infty$. Utilising this fact and the uniform local exactness assumption and arguing in the same way as in 
the proof of Theorem~\ref{thrm:Exactness_for_LevitinPolyakGeneralizedWP} one can readily verify that the penalty
function $F_c$ is globally exact.
\end{proof}

\begin{corollary}
Let the problem $(\mathcal{P})$ be Levitin-Polyak well-posed in the extended sense. Then the penalty function $F_c$ is
globally exact if and only if the following three conditions hold true:
\begin{enumerate}
\item{$F_c$ has the zero duality gap property;}

\item{$F_c$ is non-degenerate in the extended sense;}

\item{$F_c$ is locally exact at every globally optimal solution of the problem $(\mathcal{P})$.}
\end{enumerate}
\end{corollary}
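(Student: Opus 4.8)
The plan is to obtain this corollary from Theorem~\ref{thrm:Exactness_for_LevitinPolyakExtendedWP} in exactly the same manner in which the localization principle for penalty functions~I was obtained from Theorem~\ref{thrm:Exactness_for_LevitinPolyakGeneralizedWP}. The starting point is Proposition~\ref{prp:LevitinPolyakWellPosedness_2Versions}: Levitin-Polyak well-posedness of $(\mathcal{P})$ in the extended sense is equivalent to the conjunction of weak Levitin-Polyak well-posedness in the extended sense and compactness of $\argmin(\mathcal{P})$. Hence the well-posedness hypothesis of Theorem~\ref{thrm:Exactness_for_LevitinPolyakExtendedWP} is in force, and the only discrepancy between the three conditions of that theorem and those stated here is that the theorem requires \emph{uniform} local exactness at globally optimal solutions, whereas the corollary only requires local exactness at each such solution. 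So everything reduces to showing that, under compactness of $\argmin(\mathcal{P})$, these two requirements coincide.

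The necessity direction is then immediate: if $F_c$ is globally exact, Theorem~\ref{thrm:Exactness_for_LevitinPolyakExtendedWP} yields the zero duality gap property, nondegeneracy in the extended sense, and uniform local exactness at $\argmin(\mathcal{P})$, the last of which trivially entails local exactness at every point of $\argmin(\mathcal{P})$. For sufficiency, the one real task — and the main obstacle — is to upgrade pointwise local exactness to uniform local exactness by a compactness argument. First I would bound the least exact penalty parameters uniformly: if $\sup\{ c_*(x_*) \mid x_* \in \argmin(\mathcal{P}) \} = + \infty$, take $x_n \in \argmin(\mathcal{P})$ with $c_*(x_n) \to + \infty$ and, by compactness, a subsequence $x_{n_k} \to \bar x \in \argmin(\mathcal{P})$; local exactness at $\bar x$ provides a finite $c' > c_*(\bar x)$ and $\rho > 0$ with $F_{c'}(x) \ge F_{c'}(\bar x) = f_*$ on $B(\bar x, \rho) \cap Q$; since $d(x_{n_k}, \bar x) \le \rho / 2$ eventually and $F_{c'}(x_{n_k}) = f(x_{n_k}) = f_*$, the ball $B(x_{n_k}, \rho / 2)$ is contained in $B(\bar x, \rho)$, so $x_{n_k}$ is a local minimizer of the penalized problem for every $c \ge c'$ (using monotonicity of $c \mapsto F_c$), forcing $c_*(x_{n_k}) \le c' < + \infty$ — a contradiction. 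Fixing any finite $c_*$ strictly above this supremum, the same scheme bounds the radii from below: if $\inf\{ r_*(x_*, c_*) \mid x_* \in \argmin(\mathcal{P}) \} = 0$, one extracts $x_{n_k} \to \bar x \in \argmin(\mathcal{P})$ with $r_*(x_{n_k}, c_*) \to 0$, uses $c_* > c_*(\bar x)$ to get $\rho := r_*(\bar x, c_*) > 0$ with $F_c(x) \ge F_c(\bar x) = f_*$ on $B(\bar x, \rho) \cap Q$ for all $c \ge c_*$, and again the nesting $B(x_{n_k}, \rho / 2) \cap Q \subseteq B(\bar x, \rho) \cap Q$ gives $r_*(x_{n_k}, c_*) \ge \rho / 2$ — once more a contradiction. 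This produces the pair $0 < c_* < + \infty$, $0 < r_* < + \infty$ required in the definition of uniform local exactness.

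The only points needing care in the two arguments above are the standard facts used throughout the paper: that $F_c(x_*) = f(x_*) = f_*$ for every $x_* \in \argmin(\mathcal{P})$ and that $c \mapsto F_c(x)$ is nondecreasing (so an inequality $F_{c'} \ge f_*$ on a ball propagates to all $c \ge c'$), together with the triangle inequality, which guarantees that a ball of radius $\rho / 2$ about a point $\rho / 2$-close to $\bar x$ lies inside the ball of radius $\rho$ about $\bar x$. Once uniform local exactness has been established, all three hypotheses of Theorem~\ref{thrm:Exactness_for_LevitinPolyakExtendedWP} are satisfied, and that theorem delivers the global exactness of $F_c$, completing the proof.
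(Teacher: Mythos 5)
Your proposal is correct and follows exactly the route the paper intends: reduce to Theorem~\ref{thrm:Exactness_for_LevitinPolyakExtendedWP} via Proposition~\ref{prp:LevitinPolyakWellPosedness_2Versions}, using compactness of $\argmin(\mathcal{P})$ to upgrade pointwise local exactness to uniform local exactness, just as in the proof of the localization principle for penalty functions~I. The paper leaves that upgrade as something ``one can readily check''; your two compactness-and-contradiction arguments for bounding the penalty parameters above and the radii below are a correct filling-in of that step.
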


Let us also provide a stronger version of Theorem~\ref{thrm:Exactness_for_LevitinPolyakExtendedWP} in the reflexive
case.

\begin{theorem} \label{thrm:ExactPenalty_NonDeg}
Let $X$ be a reflexive Banach space, the set $Q$ be weakly sequentially closed, the functions $f$ and $\varphi$ be
weakly sequentially lsc, and the problem $(\mathcal{P})$ be weakly Levitin-Polyak well-posed in the extended sense. 
Then the penalty function $F_c$ is globally exact if and only if the following two conditions hold true:
\begin{enumerate}
\item{$F_c$ is strongly non-degenerate;}

\item{$F_c$ is uniformly locally exact at globally optimal solutions of the problem $(\mathcal{P})$.}
\end{enumerate}
\end{theorem}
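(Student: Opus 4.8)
The plan is to mirror the argument of Theorem~\ref{thrm:Exactness_for_LevitinPolyakExtendedWP}, but replace the ``nondegeneracy in the extended sense + zero duality gap'' hypotheses with the single hypothesis of strong nondegeneracy, exploiting reflexivity and weak sequential lower semicontinuity to recover feasibility of a weak limit (as was done in Theorem~\ref{thrm:LocPrinciple_ExPen_ReflCase}). First I would dispatch necessity: if $F_c$ is globally exact, then for any $c > c_*(f,\varphi)$ every $x_* \in \argmin(\mathcal{P})$ is a global minimizer of $F_c$ on $Q$, so setting $x(c) \equiv x_*$ shows the family $\{x(c)\}$ is bounded (a singleton) with $\dist(x(c),\Omega) = 0$; hence $F_c$ is strongly nondegenerate. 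Uniform local exactness at globally optimal solutions is immediate from the definition of global exactness (any $c_* > c_*(f,\varphi)$ and any $r_* > 0$ work), exactly as in the proof of Theorem~\ref{thrm:Exactness_for_LevitinPolyakGeneralizedWP}.

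For sufficiency, let $c_0 \ge 0$ be from the definition of strong nondegeneracy, pick an increasing unbounded sequence $\{c_n\} \subset (c_0, +\infty)$, and let $x_n := x(c_n)$ be the global minimizers of $F_{c_n}$ on $Q$ supplied by strong nondegeneracy, so that $\{x_n\}$ is bounded and $\dist(x_n,\Omega) \to 0$. Since $F_{c_0}$ is bounded below on $Q$ (it attains a global minimum there), Lemma~\ref{lem:MinimizingSeq_PenaltyFunc} applies and gives $\varphi(x_n) \to 0$ together with $\limsup_{n} f(x_n) \le f_*$. The bounded sequence $\{x_n\}$ has, by reflexivity, a subsequence $\{x_{n_k}\}$ weakly converging to some $x_* \in Q$ (weak sequential closedness of $Q$); weak sequential lower semicontinuity of $\varphi$ and $f$ then yields $\varphi(x_*) \le \liminf_k \varphi(x_{n_k}) = 0$ and $f(x_*) \le \liminf_k f(x_{n_k})$. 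Because $x_* \in Q$ and $\varphi(x_*) = 0$, the point $x_*$ is feasible, so $f(x_*) \ge f_*$; combining with $\limsup_k f(x_{n_k}) \le f_*$ forces $f(x_{n_k}) \to f_*$. Thus $\{x_{n_k}\}$ satisfies $f(x_{n_k}) \to f_*$ and $\dist(x_{n_k},\Omega) \to 0$, and weak Levitin--Polyak well-posedness in the extended sense gives $\dist(x_{n_k},\argmin(\mathcal{P})) \to 0$.

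From here the argument is routine and identical to the end of the proofs of Theorems~\ref{thrm:Exactness_for_LevitinPolyakGeneralizedWP} and \ref{thrm:LocPrinciple_ExPen_ReflCase}: for $k$ large enough, $\dist(x_{n_k},\argmin(\mathcal{P})) < r_*$ and $c_{n_k} \ge c_*$, where $c_*, r_* > 0$ come from uniform local exactness, so there is $\bar x \in \argmin(\mathcal{P})$ with $c_{n_k} > c_*(\bar x)$ and $\|x_{n_k} - \bar x\| < r_*(\bar x, c_{n_k})$, whence $\inf_{x \in Q} F_{c_{n_k}}(x) = F_{c_{n_k}}(x_{n_k}) \ge F_{c_{n_k}}(\bar x) = f(\bar x) = f_*$; Proposition~\ref{prp:ExactnessViaOptValues} then yields global exactness. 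I expect the only genuinely load-bearing point to be the recovery of a \emph{feasible} weak limit $x_*$ and the resulting equality $f(x_{n_k}) \to f_*$ — note that here we get it directly from weak lower semicontinuity rather than needing the zero duality gap hypothesis, which is precisely why strong nondegeneracy alone suffices in the reflexive setting. A minor subtlety worth checking is that strong nondegeneracy does not a priori force $f(x_n) < f_*$ (unlike membership in $S_{c_n}(f_*)$ in Theorem~\ref{thrm:LocPrinciple_ExPen_ReflCase}), so the bound $\limsup_n f(x_n) \le f_*$ must be taken from Lemma~\ref{lem:MinimizingSeq_PenaltyFunc} rather than from the defining inequality of the sublevel set.
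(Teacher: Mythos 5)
Your proposal is correct and follows essentially the same route as the paper's proof: strong nondegeneracy supplies a bounded sequence of global minimizers $x_n$ with $\dist(x_n,\Omega)\to 0$, Lemma~\ref{lem:MinimizingSeq_PenaltyFunc} gives $\varphi(x_n)\to 0$, reflexivity plus weak sequential closedness and lower semicontinuity produce a feasible weak limit forcing $f(x_{n_k})\to f_*$, and weak Levitin--Polyak well-posedness plus uniform local exactness finish the argument via Proposition~\ref{prp:ExactnessViaOptValues}. The only cosmetic difference is that the paper obtains $f(x_n)\le f_*$ directly from $x_n$ being a global minimizer of $F_{c_n}$ compared against feasible points, whereas you take $\limsup_n f(x_n)\le f_*$ from the lemma; these are the same estimate.
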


\begin{proof}
The necessity of the two conditions of the theorem for the global exactness of the penalty function can be readily
verified directly. Let us check their sufficiency.

Choose any increasing unbounded sequence $\{ c_n \} \subset [0, + \infty)$ with $c_0 \ge 0$ being from the definition 
of strong nondegeneracy. By this definition for any $n \in \mathbb{N}$ there exists 
$x_n \in \argmin_{x \in Q} F_{c_n}(x)$ such that the sequence $\{ x_n \}$ is bounded and $\dist(x_n, \Omega) \to 0$ as
$n \to \infty$. Note also that by Lemma~\ref{lem:MinimizingSeq_PenaltyFunc} one has $\varphi(x_n) \to 0$ as 
$n \to \infty$.

Recall that the space $X$ is reflexive. Therefore, one can extract a subsequence $\{ x_{n_k} \}$ converging to some 
$x_* \in Q$. From the weak sequential lsc of $\varphi$ and the fact that $\varphi(x_n) \to 0$ as $n \to \infty$ it
follows that $\varphi(x_*) = 0$, that is, the point $x_*$ is feasible for the problem $(\mathcal{P})$. In turn, by the
definition of $x_n$ one obviously has $f(x_n) \le f_*$, which with the use of the weak sequential lsc of $f$ implies
that
\[
  f_* \ge \limsup_{k \to \infty} f(x_{n_k}) \ge \liminf_{k \to \infty} f(x_{n_k}) \ge f(x_*) \ge f_*
\]
(the last inequality follows from the fact that $x_*$ is feasible), that is, $f(x_{n_k}) \to f_*$ as $k \to \infty$.

Thus, for the subsequence $\{ x_{n_k} \}$ one has $\dist(x_{n_k}, \Omega) \to 0$ and $f(x_{n_k}) \to f_*$ as 
$k \to \infty$. Therefore $\dist(x_{n_k}, \argmin(\mathcal{P})) \to 0$ as $k \to \infty$, since the problem
$(\mathcal{P})$ is weakly Levitin-Polyak well-posed in the extended sense. Utilising this fact and repeating the same
argument as in the proof of Theorem~\ref{thrm:Exactness_for_LevitinPolyakGeneralizedWP} one can conclude that the
penalty function $F_c$ is globally exact.
\end{proof}

In the end of this section, let us note that neither the Levitin-Polyak well-posedness in the extended sense/with
respect to an infeasibility measure nor even the Tykhonov well-posedness are necessary for the exactness of a penalty
function, as the following simple example demonstrates.

\begin{example}
Let $X = Q = \mathbb{R}$, $M = [0, + \infty)$ and
\[
  f(x) = \begin{cases}
    x, & \text{if } x \le 1,
    \\
    1/x, & \text{if } x > 1.
  \end{cases}
\]
Note that $f_* = 0$ and $\argmin(\mathcal{P}) = \{ 0 \}$. Moreover, the problem $(\mathcal{P})$ is not Tykhonov 
well-posed, since for the sequence $x_n \equiv n$ one has $f(x_n) \to 0 = f_*$, but 
$\dist(x_n, \argmin(\mathcal{P})) \to + \infty$ as $n \to \infty$.

On the other hand, if one sets $\varphi(x) = \max\{ -x, 0 \}$, then $\varphi$ is an infeasibility measure for 
the constraint $x \in M$ on the set $Q = X$ and, in addition, the corresponding penalty function $F_c$ is globally exact
due to the fact that
\[
  F_c(x) = x + c \max\{ -x, 0 \} > 0 = f_* \quad \forall x \in (- \infty, 0) \enspace \forall c > 1.
\]
Note that in this example $c_*(f, \varphi) = 1$.
\end{example}

\begin{remark}
As was shown in this section, a suitable version of the Levitin-Polyak well-posedness allows one to extend the
localization principle to the infinite dimensional case. In \cite[Section~3.5]{Dolgopolik_ExPenI} a number of
counterexamples to the localization principle in the infinite dimensional case was presented. It is worth briefly
discussing these examples in the light of the results obtained in this section. The problem from
\cite[Example~3]{Dolgopolik_ExPenI} is Levitin-Polyak well-posed in the extended sense, but is not Levitin-Polyak
well-posed with respect to the chosen infeasibility measure, while the penalty function from this example is locally
exact at a unique globally optimal solution, but is not non-degenerate. The problem from
\cite[Example~4]{Dolgopolik_ExPenI} is Levitin-Polyak well-posed both in the extended sense and with respect to the
chosen infeasibility measure, while the penalty function from this example is strongly non-degenerate, but is not
uniformly locally exact at globally optimal solutions of the corresponding problem. Finally, the problem from
\cite[Example~5]{Dolgopolik_ExPenI} is not even Tykhonov well-posed in the extended sense, while the corresponding
penalty function is strongly non-degenerate and uniformly locally exact at globally optimal solutions of the
optimization problem from this example.

Thus, the counterexamples to the localization principle in the infinite dimensional case from 
\cite[Section~3.5]{Dolgopolik_ExPenI} can be viewed as examples showing that the claims of 
Theorems~\ref{thrm:Exactness_for_LevitinPolyakGeneralizedWP}--\ref{thrm:ExactPenalty_NonDeg} are no longer true,
if one of the assumptions of these theorems is not satisfied. 
\end{remark}

\section{Global saddle points of augmented Lagrangians for well-posed problems}
\label{sect:AugmentedLagrangians}

A suitable version of well-posedness of constrained optimization problems can be used not only to extend 
the localization principle for exact penalty functions to the infinite dimensional case, but also to provide first
verifiable necessary and sufficient conditions for the existence of global saddle points of augmented Lagrangians for
infinite dimensional cone constrained optimization problems that do not rely on any abstract on behaviour of the
problem with respect to perturbations. These necessary and sufficient conditions can be viewed as an extension of the
localization principle for global saddle points of augmented Lagrangians
\cite{Dolgopolik_AugmLagrMult,Dolgopolik_AugmLagrTheory,Dolgopolik_MinimaxExact} to the infinite dimensional case.

In this section we study augmented Lagrangians for the following cone constrained optimization problem:
\begin{equation} \label{prob:ConeConstrained}
  \min \: f(x) \quad \text{subject to} \quad G(x) \in K, \quad x \in Q,
\end{equation}
where $G \colon X \to Y$ is a given function, $Y$ is a normed space, and $K \subset Y$ is a closed convex cone. We
suppose that the feasible set of this problem is nonempty, its optimal value is finite, and there exists a globally
optimal solution of this problem.

\begin{remark}
Problem \eqref{prob:ConeConstrained} can be rewritten as the problem $(\mathcal{P})$, if one defines
$M = \{ x \in X \mid G(x) \in K \}$. Therefore, all definitions of well-posedness from Section~\ref{sect:WellPosedness}
can be naturally translated to the case of problem \eqref{prob:ConeConstrained}.
\end{remark}

Let us introduce an augmented Lagrangian for problem \eqref{prob:ConeConstrained}. To this end, we will use the same 
axiomatic augmented Lagrangian setting as in the author's earlier papers 
\cite{Dolgopolik_AugmLagrTheory,Dolgopolik_AugmLagrMethods}. Denote by $Y^*$ the topological dual space of $Y$, and let 
$\langle \cdot, \cdot \rangle$ be the duality pairing between $Y$ and its dual. Denote the polar cone of the cone $K$
by $K^* = \{ y^* \in Y^* \mid \langle y^*, y \rangle \le 0 \: \forall y \in K \}$.

Suppose that some function $\Phi \colon Y \times Y^* \times (0, + \infty) \to (- \infty, + \infty]$ is given. An
augmented Lagrangian for problem \eqref{prob:ConeConstrained} is defined as follows:
\[
  \mathscr{L}(x, \lambda, c) = f(x) + \Phi(G(x), \lambda, c).
\]
Here $\lambda \in Y^*$ is a multiplier and $c > 0$ is a penalty parameter.

Let $\Lambda \subset Y^*$ be some nonempty set of admissible multipliers. Typically, either $\Lambda = Y^*$ or 
$\Lambda = K^*$. Below we will utilise the following assumptions (axioms) on the function $\Phi$:
\begin{itemize}
\item[(A1)]{$\forall y \in K \: \forall \lambda \in \Lambda \: \forall c > 0$ one has $\Phi(y, \lambda, c) \le 0$;}

\item[(A2)]{$\forall y \in K \: \forall c > 0 \: \exists \lambda \in \Lambda$ such that $\Phi(y, \lambda, c) \ge 0$;}

\item[(A3)]{$\forall y \notin K \: \forall c > 0 \: \exists \lambda \in \Lambda$ such that 
$\lim\limits_{t \to + \infty} \Phi(y, t \lambda, c) = + \infty$;}

\item[(A4)]{$\forall y \in Y \: \forall \lambda \in \Lambda$ the function $c \mapsto \Phi(y, \lambda, c)$ is
non-decreasing;}

\item[(A5)]{$\forall \lambda \in \Lambda \: \forall c_0 > 0 \: \forall r > 0$ one has
\begin{multline*}
  \lim_{c \to + \infty} \inf\Big\{ \Phi(y, \lambda, c) - \Phi(y, \lambda, c_0) \Bigm| 
  \\
  y \in Y \colon \dist(y, K) \ge r, \: \Phi(y, \lambda, c_0) < + \infty \Big\} = + \infty.
\end{multline*}
}
\end{itemize}
One can readily verify that most particular augmented Lagrangians satisfy these assumptions for a suitably chosen set
$\Lambda$ of admissible multipliers (see numerous examples in 
\cite{Dolgopolik_AugmLagrTheory,Dolgopolik_AugmLagrMethods}). In particular, in the case when $Y$ is a Hilbert space 
they are satisfied for the Hestenes-Powell-Rockafellar augmented Lagrangian
\[
  \Phi(y, \lambda, c) = \frac{1}{2c} \Big[ \dist(\lambda + c y, K)^2 - \| \lambda \|^2 \Big]
\]
with $\Lambda = Y^*$.

Let us recall the definitions of global and local saddle points of augmented Lagrangians
(see \cite{RockafellarWets,ShapiroSun,Dolgopolik_AugmLagrMult,Dolgopolik_AugmLagrTheory}).

\begin{definition} \label{def:GlobalSaddlePoint}
A pair $(x_*, \lambda_*) \in Q \times \Lambda$ is called \textit{a global saddle point} of the augmented Lagrangian
$\mathscr{L}(x, \lambda, c)$, if there exists $c^* > 0$ such that
\[
  \sup_{\lambda \in \Lambda} \mathscr{L}(x_*, \lambda, c) \le \mathscr{L}(x_*, \lambda_*, c)
  \le \inf_{x \in Q} \mathscr{L}(x, \lambda_*, c) < + \infty \quad \forall c \ge c^*.
\]
The infimum of all such $c^*$ is denoted by $c^*(x_*, \lambda_*)$ and is called 
\textit{the least exact penalty parameter} for the global saddle point $(x_*, \lambda_*)$.
\end{definition}

\begin{definition}
A pair $(x_*, \lambda_*) \in Q \times \Lambda$ is called \textit{a local saddle point} of the augmented Lagrangian
$\mathscr{L}(x, \lambda, c)$, if there exist $c^* > 0$ and a neighbourhood $U$ of $x_*$ such that
\begin{equation} \label{eq:LocalSaddlePoint}
  \sup_{\lambda \in \Lambda} \mathscr{L}(x_*, \lambda, c) \le \mathscr{L}(x_*, \lambda_*, c)
  \le \inf_{x \in U \cap Q} \mathscr{L}(x, \lambda_*, c) < + \infty \quad \forall c \ge c^*.
\end{equation}
The infimum of all such $c^*$ is denoted by $c_{loc}^*(x_*, \lambda_*)$ and is called 
\textit{the least exact penalty parameter} for the local saddle point $(x_*, \lambda_*)$.
\end{definition}

Let $(x_*, \lambda_*)$ be a local saddle point of the augmented Lagrangian $\mathscr{L}(\cdot)$. Then
there exist $r > 0$ and $c_* > 0$ such that inequalities \eqref{eq:LocalSaddlePoint} are satisfied with
$U = B(x_*, r)$. Denote by $r_*(x_*, \lambda_*, c_*)$ the supremum of all $r > 0$ for which inequalities 
\eqref{eq:LocalSaddlePoint} are satisfied with $U = B(x_*, r)$. The quantity $r_*(x_*, \lambda_*, c_*)$ has the same
meaning as the radius of local exactness of a penalty function introduced in Section~\ref{sect:ExactPenalty_Prelim}
We will use it below to extend the assumption on the uniform local exactness of a penalty function from 
Theorem~\ref{thrm:Exactness_for_LevitinPolyakGeneralizedWP} to the case of a collection of local saddle points
of an augmented Lagrangian. 

Below we will need the two following auxiliary results on saddle points of augmented Lagrangians. The first one follows
directly from \cite[Remark~6]{Dolgopolik_AugmLagrTheory}, while the second one follows from 
\cite[Proposition~1 and Corollary~2]{Dolgopolik_AugmLagrTheory} and \cite[Corollary~6]{Dolgopolik_AugmLagrMethods}.

\begin{lemma} \label{lem:LSP_AugmLagrValue}
Let assumptions $(A1)$--$(A3)$ hold true and $(x_*, \lambda_*)$ be a local saddle point of $\mathscr{L}(\cdot)$. Then 
$\mathscr{L}(x_*, \lambda_*, c) = f(x_*)$ for any $c > c^*_{loc}(x_*, \lambda_*)$.
\end{lemma}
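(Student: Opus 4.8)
The plan is to prove $\mathscr{L}(x_*, \lambda_*, c) = f(x_*)$ for every $c > c^*_{loc}(x_*, \lambda_*)$ by obtaining the two inequalities $\mathscr{L}(x_*, \lambda_*, c) \le f(x_*)$ and $\mathscr{L}(x_*, \lambda_*, c) \ge f(x_*)$ separately from the local saddle point inequalities \eqref{eq:LocalSaddlePoint}. Recall that $\mathscr{L}(x, \lambda, c) = f(x) + \Phi(G(x), \lambda, c)$, so the claim is equivalent to $\Phi(G(x_*), \lambda_*, c) = 0$. The key observation is that $x_*$ is feasible for problem \eqref{prob:ConeConstrained}: this is where assumption (A3) enters.

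First I would establish feasibility of $x_*$, i.e. $G(x_*) \in K$. Suppose not, so $G(x_*) \notin K$. By (A3) there exists $\lambda \in \Lambda$ with $\lim_{t \to +\infty} \Phi(G(x_*), t\lambda, c) = +\infty$. But the left-hand inequality in \eqref{eq:LocalSaddlePoint} gives $\mathscr{L}(x_*, t\lambda, c) \le \mathscr{L}(x_*, \lambda_*, c) < +\infty$ for all $t$ (since $t\lambda \in \Lambda$ whenever $\Lambda$ is a cone, which holds in the relevant cases $\Lambda = Y^*$ or $\Lambda = K^*$; more carefully, one should invoke the $\sup$ over $\Lambda$ directly), hence $f(x_*) + \Phi(G(x_*), t\lambda, c) \le \mathscr{L}(x_*, \lambda_*, c)$ for all $t > 0$, contradicting $\Phi(G(x_*), t\lambda, c) \to +\infty$. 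Therefore $G(x_*) \in K$. Now for the upper bound: by (A2), for the given $c$ there exists $\lambda \in \Lambda$ with $\Phi(G(x_*), \lambda, c) \ge 0$, so $\mathscr{L}(x_*, \lambda, c) \ge f(x_*)$; combining with the left inequality of \eqref{eq:LocalSaddlePoint}, $f(x_*) \le \mathscr{L}(x_*, \lambda, c) \le \sup_{\lambda' \in \Lambda} \mathscr{L}(x_*, \lambda', c) \le \mathscr{L}(x_*, \lambda_*, c)$. For the lower bound I would use feasibility together with (A1): since $G(x_*) \in K$, (A1) gives $\Phi(G(x_*), \lambda_*, c) \le 0$, hence $\mathscr{L}(x_*, \lambda_*, c) = f(x_*) + \Phi(G(x_*), \lambda_*, c) \le f(x_*)$.

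Putting the two bounds together yields $\mathscr{L}(x_*, \lambda_*, c) = f(x_*)$ for every $c > c^*_{loc}(x_*, \lambda_*)$, since by definition of the least exact penalty parameter the inequalities \eqref{eq:LocalSaddlePoint} hold for all such $c$. I expect the only delicate point to be the application of (A3) in the feasibility step: one must be sure that the multipliers $t\lambda$ used there are admissible and that the left-hand supremum inequality in \eqref{eq:LocalSaddlePoint} really dominates $\mathscr{L}(x_*, t\lambda, c)$ for all $t > 0$; this is immediate because $\sup_{\lambda \in \Lambda} \mathscr{L}(x_*, \lambda, c)$ bounds $\mathscr{L}(x_*, t\lambda, c)$ for any $t\lambda \in \Lambda$, and the standard sets $\Lambda = Y^*$ and $\Lambda = K^*$ are cones. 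Everything else is a direct substitution into the axioms, so no lengthy computation is needed.
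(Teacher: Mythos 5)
Your proof is correct. Note first that the paper itself does not prove this lemma: it is imported wholesale from \cite[Remark~6]{Dolgopolik\_AugmLagrTheory}, so there is no in-paper argument to compare against. Your reconstruction is the natural one and it works: (A3) together with the left-hand supremum inequality in \eqref{eq:LocalSaddlePoint} forces feasibility $G(x_*) \in K$ (otherwise $\sup_{\lambda \in \Lambda} \mathscr{L}(x_*, \lambda, c)$ would be $+\infty$, contradicting $\mathscr{L}(x_*, \lambda_*, c) < +\infty$); then (A2) plus the same supremum inequality gives $\mathscr{L}(x_*, \lambda_*, c) \ge f(x_*)$, and (A1) applied to the feasible point $x_*$ gives $\mathscr{L}(x_*, \lambda_*, c) = f(x_*) + \Phi(G(x_*), \lambda_*, c) \le f(x_*)$. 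The one delicate point is exactly the one you flag: for (A3) to contradict the saddle-point inequality you need $t\lambda \in \Lambda$ for arbitrarily large $t$, i.e.\ $\Lambda$ must be stable under multiplication by large positive scalars. The paper only says $\Lambda$ is ``some nonempty set of admissible multipliers,'' so as literally stated the axiom system does not quite support the lemma; but this is a defect of the formulation rather than of your argument, since (A3) is useless without such stability, and it holds for the standard choices $\Lambda = Y^*$ and $\Lambda = K^*$ that the paper has in mind. Your observation that feasibility of $x_*$ is $c$-independent, so the case $c > c^*_{loc}(x_*,\lambda_*)$ with $c$ arbitrarily close to the infimum is covered by picking an admissible $c^* \le c$, is also handled correctly.
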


\begin{lemma} \label{lem:GlobalSP}
Let assumptions $(A1)$--$(A4)$ be satisfied and $(x_*, \lambda_*)$ be a global saddle point of $\mathscr{L}(\cdot)$.
Then the point $x_*$ is a globally optimal solution of problem \eqref{prob:ConeConstrained} and for any other globally
optimal solution $z_*$ of this problem the pair $(z_*, \lambda_*)$ is also a global saddle point of $\mathscr{L}(\cdot)$
and $c^*(z_*, \lambda_*) = c^*(x_*, \lambda_*)$.
\end{lemma}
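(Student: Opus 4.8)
The plan is to first show that $x_*$ is globally optimal, then to transfer the saddle point property from $x_*$ to an arbitrary other globally optimal solution $z_*$, and finally to identify the least exact penalty parameters. Fix $c > c^*(x_*, \lambda_*)$; by Lemma~\ref{lem:LSP_AugmLagrValue} (whose hypotheses $(A1)$--$(A3)$ are available) we have $\mathscr{L}(x_*, \lambda_*, c) = f(x_*)$, so the right inequality in the definition of a global saddle point reads $f(x_*) \le \mathscr{L}(x, \lambda_*, c)$ for all $x \in Q$. First I would use this on feasible points: if $G(x) \in K$ and $x \in Q$, then by $(A1)$ one has $\Phi(G(x), \lambda_*, c) \le 0$, hence $\mathscr{L}(x, \lambda_*, c) = f(x) + \Phi(G(x), \lambda_*, c) \le f(x)$, so $f(x_*) \le f(x)$. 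Since $x_*$ itself is feasible (this follows from the left inequality $\sup_{\lambda}\mathscr{L}(x_*,\lambda,c) \le f(x_*) < +\infty$ together with $(A3)$: if $G(x_*) \notin K$ then $\sup_{\lambda \in \Lambda}\Phi(G(x_*),\lambda,c) = +\infty$, a contradiction), this proves $x_*$ is a globally optimal solution and $f(x_*) = f_*$.

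Next I would take any other globally optimal solution $z_*$ and verify the two saddle point inequalities for the pair $(z_*, \lambda_*)$ with the same $c^* = c^*(x_*, \lambda_*)$. For the right inequality: for every $x \in Q$ and $c \ge c^*$, we already have $\mathscr{L}(x, \lambda_*, c) \ge f(x_*) = f_* = f(z_*)$, and since $z_*$ is feasible, $(A1)$ gives $\mathscr{L}(z_*, \lambda_*, c) = f(z_*) + \Phi(G(z_*), \lambda_*, c) \le f(z_*)$; combining, $\mathscr{L}(z_*, \lambda_*, c) \le f(z_*) = f_* \le \inf_{x \in Q}\mathscr{L}(x, \lambda_*, c)$, which in particular forces $\mathscr{L}(z_*, \lambda_*, c) = f(z_*)$ and the chain $\mathscr{L}(z_*,\lambda_*,c) \le \inf_{x\in Q}\mathscr{L}(x,\lambda_*,c) < +\infty$. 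For the left inequality: since $z_*$ is feasible, $(A1)$ gives $\Phi(G(z_*), \lambda, c) \le 0$ for every $\lambda \in \Lambda$, so $\sup_{\lambda \in \Lambda}\mathscr{L}(z_*, \lambda, c) \le f(z_*) = \mathscr{L}(z_*, \lambda_*, c)$. Thus $(z_*, \lambda_*)$ is a global saddle point and it is valid for exactly the same range $c \ge c^*$, so $c^*(z_*, \lambda_*) \le c^*(x_*, \lambda_*)$.

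Finally, the equality $c^*(z_*, \lambda_*) = c^*(x_*, \lambda_*)$ follows by symmetry: $z_*$ is itself a globally optimal solution, so applying the argument just given with the roles of $x_*$ and $z_*$ interchanged yields $c^*(x_*, \lambda_*) \le c^*(z_*, \lambda_*)$. I do not expect any serious obstacle here; the only point requiring a little care is making sure the finiteness condition ``$< +\infty$'' in Definition~\ref{def:GlobalSaddlePoint} is genuinely inherited — but this drops out of the displayed chain $\mathscr{L}(z_*,\lambda_*,c) = f(z_*) < +\infty$ and $\inf_{x\in Q}\mathscr{L}(x,\lambda_*,c) \ge f_*$ being finite while bounded above by $\mathscr{L}(z_*,\lambda_*,c)$. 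The role of $(A4)$ (monotonicity in $c$) is merely to ensure the saddle point inequalities, once established at some $c$, persist for all larger $c$, which is already encoded in the definition via the threshold $c^*$.
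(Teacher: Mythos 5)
Your argument is correct. Note that the paper does not prove this lemma internally: it only cites external results (Proposition~1 and Corollary~2 of the author's earlier augmented Lagrangian paper, together with a corollary from a companion paper), so your proof is a self-contained replacement rather than a variant of an in-paper argument, and it is worth having. Two small points deserve attention. First, to deduce feasibility of $x_*$ from $(A3)$ you need $\sup_{\lambda \in \Lambda} \Phi(G(x_*), \lambda, c) = +\infty$ whenever $G(x_*) \notin K$, which requires that $t\lambda \in \Lambda$ for all $t > 0$ (or at least along an unbounded sequence of $t$); this holds for the typical choices $\Lambda = Y^*$ and $\Lambda = K^*$ and is implicitly assumed throughout the paper (the same step underlies Lemma~\ref{lem:LSP_AugmLagrValue}), but you should say so explicitly. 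Second, the saddle point inequalities need not hold at $c = c^*(x_*, \lambda_*)$ itself, only for every strictly larger $c$; so the transfer to $(z_*, \lambda_*)$ should be phrased as: for every $\varepsilon > 0$ the inequalities hold for all $c \ge c^*(x_*, \lambda_*) + \varepsilon$, whence $c^*(z_*, \lambda_*) \le c^*(x_*, \lambda_*)$, and the symmetry argument then gives equality. Your observation that $(A4)$ plays no essential role in this direct argument is accurate --- it enters only through the externally cited results --- and everything else (using $(A1)$ on feasible points to get global optimality of $x_*$, and using $(A1)$ plus $f(z_*) = f(x_*)$ to force $\mathscr{L}(z_*, \lambda_*, c) = f(z_*) = \inf_{x \in Q} \mathscr{L}(x, \lambda_*, c)$) is sound.
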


Our aim is to provide verifiable sufficient conditions for the existence of a global saddle point of the augmented
Lagrangian $\mathscr{L}(\cdot)$ in the case when problem \eqref{prob:ConeConstrained} is well-posed in some suitable
sense. To this end, we will need the following auxiliary result, which can be viewed as an extension of
Lemma~\ref{lem:MinimizingSeq_PenaltyFunc} to the case of augmented Lagrangians for cone constrained
optimization problems. Denote by
\[
  \beta(p) = \inf\Big\{ f(x) \Bigm| x \in Q \colon G(x) - p \in K \Big\} \quad \forall p \in Y
\]
\textit{the optimal value function} (\textit{the perturbation function}) of problem \eqref{prob:ConeConstrained}.

\begin{lemma} \label{lem:AugmLagr_MinimizingSeqLimits}
Let assumptions $(A1)$, $(A4)$, $(A5)$ hold true. Suppose that a multiplier $\lambda \in \Lambda$, a sequence 
$\{ \varepsilon_n \} \subset [0, + \infty)$, and a non-decreasing sequence $\{ c_n \} \subset (0, + \infty)$ are such
that $\varepsilon_n \to 0$ and $c_n \to + \infty$ as $n \to \infty$ and the function $\mathscr{L}(\cdot, \lambda, c_0)$
is bounded below on $Q$. Then for any sequence $\{ x_n \} \subset Q$ of $\varepsilon_n$-minimizers 
of $\mathscr{L}(\cdot, \lambda, c_n)$ on the set $Q$, that is,
\[
  \mathscr{L}(x_n, \lambda, c_n) \le \inf_{x \in Q} \mathscr{L}(x, \lambda, c_n) + \varepsilon_n 
  \quad \forall n \in \mathbb{N},
\]
one has $\dist(G(x_n), K) \to 0$ as $n \to \infty$ and
\begin{equation} \label{eq:ALMethod_ObjFuncLimits}
  \min\{ \liminf_{p \to 0} \beta(p), f_* \} \le \liminf_{n \to \infty} f(x_n) \le \limsup_{n \to \infty} f(x_n) \le f_*,
\end{equation}
where $f_*$ is the optimal value of problem \eqref{prob:ConeConstrained}.
\end{lemma}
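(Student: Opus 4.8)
The plan is to mirror the argument of Lemma~\ref{lem:MinimizingSeq_PenaltyFunc}, replacing the penalty term $\varphi$ by the infeasibility measure $x \mapsto \dist(G(x), K)$ and the monotone structure of $c \mapsto c\varphi(x)$ by axioms $(A1)$, $(A4)$, $(A5)$. First I would observe that $\mathscr{L}(\cdot, \lambda, c_n)$ is bounded below on $Q$ for every $n$: since $\{c_n\}$ is non-decreasing and, by $(A4)$, $c \mapsto \Phi(y, \lambda, c)$ is non-decreasing for each $y$, we have $\mathscr{L}(x, \lambda, c_n) \ge \mathscr{L}(x, \lambda, c_0)$ for all $x$, so the $\varepsilon_n$-minimizers are well defined. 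Next, using $(A1)$ (which gives $\Phi(G(x), \lambda, c) \le 0$ hence $\mathscr{L}(x, \lambda, c) \le f(x)$ for feasible $x$) together with weak duality, I would establish the uniform upper bound $\mathscr{L}(x_n, \lambda, c_n) \le f_* + \varepsilon_n$ by testing the infimum at any fixed globally optimal solution.

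The key step is showing $\dist(G(x_n), K) \to 0$. Suppose not: then there is $r > 0$ and a subsequence with $\dist(G(x_{n_k}), K) \ge r$. Write $\mathscr{L}(x_{n_k}, \lambda, c_{n_k}) = f(x_{n_k}) + \Phi(G(x_{n_k}), \lambda, c_{n_k})$. One has to be a little careful here because $(A5)$ only controls $\Phi(y, \lambda, c) - \Phi(y, \lambda, c_0)$ over $y$ with $\Phi(y, \lambda, c_0) < +\infty$; but if $\Phi(G(x_{n_k}), \lambda, c_0) = +\infty$ then $\mathscr{L}(x_{n_k}, \lambda, c_0) = +\infty$ and, by $(A4)$, $\mathscr{L}(x_{n_k}, \lambda, c_{n_k}) = +\infty$, already contradicting the uniform upper bound; so we may assume $\Phi(G(x_{n_k}), \lambda, c_0) < +\infty$. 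Then
\[
  \mathscr{L}(x_{n_k}, \lambda, c_{n_k}) \ge \inf_{x \in Q} \mathscr{L}(x, \lambda, c_0) + \Big[\Phi(G(x_{n_k}), \lambda, c_{n_k}) - \Phi(G(x_{n_k}), \lambda, c_0)\Big],
\]
and the bracketed term tends to $+\infty$ uniformly over the subsequence by $(A5)$ (applied with this $r$ and $c_0$), forcing $\mathscr{L}(x_{n_k}, \lambda, c_{n_k}) \to +\infty$, which contradicts the uniform upper bound. This is the main obstacle: correctly matching the quantifier structure of $(A5)$ to the situation at hand and disposing of the $\Phi(\cdot, \lambda, c_0) = +\infty$ case separately.

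Finally, I would prove the two-sided estimate \eqref{eq:ALMethod_ObjFuncLimits}. The upper bound $\limsup_n f(x_n) \le f_*$ is immediate from $\mathscr{L}(x_n, \lambda, c_n) \le f_* + \varepsilon_n$ together with $(A1)$, which gives $f(x_n) = \mathscr{L}(x_n, \lambda, c_n) - \Phi(G(x_n), \lambda, c_n)$; here one needs $\Phi(G(x_n), \lambda, c_n) \ge$ (something bounded below), which is not directly an axiom, so I would instead argue via a slightly different route: extract from any minimizing subsequence of $\{f(x_n)\}$ either a sub-subsequence feasible for \eqref{prob:ConeConstrained}, on which $f \ge f_*$ directly, or an infeasible one, on which $f(x_{n_k}) \ge \beta(p_{n_k})$ for suitable $p_{n_k} \in Y$ with $G(x_{n_k}) - p_{n_k} \in K$ and $\|p_{n_k}\| \le \dist(G(x_{n_k}), K) + 1/k \to 0$, hence $\liminf_n f(x_n) \ge \liminf_{p \to 0}\beta(p)$; combining the two cases yields the lower estimate $\min\{\liminf_{p \to 0}\beta(p), f_*\} \le \liminf_n f(x_n)$. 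For the upper bound I would separately note that $x_n$ being an $\varepsilon_n$-minimizer together with $(A1)$ (so that $\mathscr{L}(x, \lambda, c_n) \le f(x)$ for feasible $x$ and in particular $\inf_Q \mathscr{L}(\cdot, \lambda, c_n) \le f_*$) gives $\mathscr{L}(x_n, \lambda, c_n) \le f_* + \varepsilon_n$; and since by $(A1)$ again... — actually the clean statement is that feasibility-in-the-limit plus the definition of $\beta$ and weak duality $\mathscr{L}(x_n,\lambda,c_n) \le f_* + \varepsilon_n$ already pin down $\limsup_n f(x_n) \le f_*$ once one knows $\Phi(G(x_n),\lambda,c_n)$ cannot be very negative; I would handle this by invoking that along any subsequence where $\Phi(G(x_n),\lambda,c_n)$ stays bounded below the claim is clear, and $(A4)$ plus $\dist(G(x_n),K)\to 0$ rule out the alternative. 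This completes the proof along the lines of Lemma~\ref{lem:MinimizingSeq_PenaltyFunc}.
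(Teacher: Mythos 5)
Your argument for $\dist(G(x_n), K) \to 0$ and for the lower estimate in \eqref{eq:ALMethod_ObjFuncLimits} follows the same route as the paper: boundedness below of $\mathscr{L}(\cdot,\lambda,c_n)$ via $(A4)$, the uniform bound $\mathscr{L}(x_n,\lambda,c_n) \le f_* + \varepsilon_n$ obtained by testing the infimum on the feasible set and using $(A1)$, the contradiction with $(A5)$ along a subsequence with $\dist(G(x_{n_k}),K) \ge r$ (including the correct separate disposal of the case $\Phi(G(x_{n_k}),\lambda,c_0) = +\infty$, which the paper handles the same way), and the feasible/infeasible dichotomy with perturbations $p_k \to 0$ and $f(x_{n_k}) \ge \beta(p_k)$. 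All of this is correct and essentially identical to the paper's proof.

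The genuine gap is the upper estimate $\limsup_{n} f(x_n) \le f_*$. You correctly notice that $f(x_n) = \mathscr{L}(x_n,\lambda,c_n) - \Phi(G(x_n),\lambda,c_n)$ and that none of $(A1)$, $(A4)$, $(A5)$ supplies a lower bound on $\Phi(G(x_n),\lambda,c_n)$, but your proposed repair does not work: knowing that $\Phi(G(x_n),\lambda,c_n)$ ``stays bounded below'' by some constant $-M$ only yields $\limsup_{n} f(x_n) \le f_* + M$, and the closing claim that ``$(A4)$ plus $\dist(G(x_n),K)\to 0$ rule out the alternative'' is not an argument --- neither axiom relates the value of $\Phi$ at points near $K$ to zero. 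What is actually needed is an estimate of the form $\liminf_{n} \Phi(G(x_n),\lambda,c_n) \ge 0$ along the minimizing sequence; for the Hestenes--Powell--Rockafellar function this holds because $\Phi(y,\lambda,c) \ge -\|\lambda\|^2/(2c) \to 0$ as $c \to +\infty$, but it is not a formal consequence of the listed axioms. For what it is worth, the paper's own proof deduces the upper estimate from $\mathscr{L}(x_n,\lambda,c_n) \le f_* + \varepsilon_n$ in a single word (``consequently''), i.e.\ it passes silently over exactly the point you stumbled on; so you have located a real subtlety, but you have not resolved it, and your write-up as it stands does not prove the third inequality in \eqref{eq:ALMethod_ObjFuncLimits}.
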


\begin{proof}
Firstly, note that by assumption $(A4)$ the function $c \mapsto \mathscr{L}(x, \lambda, c)$ is non-decreasing for any 
$x \in X$. Hence taking into account the facts that the function $\mathscr{L}(\cdot, \lambda, c_0)$ is bounded below on
$Q$ and the sequence $\{ c_n \}$ is non-decreasing one can conclude that for any $n \in \mathbb{N}$ the function
$\mathscr{L}(\cdot, \lambda, c_n)$ is bounded below on $Q$ and the notion of $\varepsilon_n$-minimizer of this function
on the set $Q$ is correctly defined for any $\varepsilon_n \ge 0$.

By assumption $(A1)$ for any point $x \in Q$ that is feasible for problem \eqref{prob:ConeConstrained} one has 
$\mathscr{L}(x, \lambda, c_n) \le f(x)$. Therefore
\begin{align*}
  \mathscr{L}(x_n, \lambda, c_n) &\le \inf_{x \in Q} \mathscr{L}(x, \lambda, c_n) + \varepsilon_n
  \le \inf_{x \in \Omega} \mathscr{L}(x, \lambda, c_n) + \varepsilon_n
  \\
  &\le \inf_{x \in \Omega} f(x) + \varepsilon_n = f_* + \varepsilon_n,
\end{align*}
where $\Omega$ is the feasible region of problem \eqref{prob:ConeConstrained}. Consequently, the upper estimate in 
\eqref{eq:ALMethod_ObjFuncLimits} holds true.

To prove the lower estimate, fix any $r > 0$ and $x \in Q$ satisfying the inequality $\dist(G(x), K) \ge r$. Note that 
for any $n \in \mathbb{N}$ one has
\begin{multline*}
  \mathscr{L}(x, \lambda, c_n) = \mathscr{L}(x, \lambda, c_0) + \Phi(G(x), \lambda, c_n) - \Phi(G(x), \lambda, c_0)
  \ge \inf_{z \in Q} \mathscr{L}(z, \lambda, c_0) 
  \\
  + \inf\Big\{ \Phi(y, \lambda, c_n) - \Phi(y, \lambda, c_0) \Bigm| 
  y \in Y \colon \dist(y, K) \ge r, \: \Phi(y, \lambda, c_0) < + \infty \Big\},
\end{multline*}
if $\Phi(G(x), \lambda, c_n) < + \infty$, and $\mathscr{L}(x, \lambda, c_n) = + \infty$ otherwise. The right-hand side
of this inequality converges to $+ \infty$ as $n \to \infty$ by assumption $(A5)$. On the other hand, as was noted
above, $\mathscr{L}(x_n, \lambda, c_n) \le f_* + \varepsilon_n$ for any $n \in \mathbb{N}$. Therefore, 
$\dist(G(x_n), K) \le r$ for any sufficiently large $n$. Since $r > 0$ was chosen arbitrarily, one can conclude that
$\dist(G(x_n), K) \to 0$ as $n \to \infty$.

Choose any subsequence $\{ x_{n_k} \}$ such that
\[
  \liminf_{n \to \infty} f(x_n) = \lim_{k \to \infty} f(x_{n_k}).
\]
At least one such subsequence exists by the definition of the lower limit. Replacing, if necessary, the sequence 
$\{ x_{n_k} \}$ with its subsequence, one can suppose that the sequence $\{ x_{n_k} \}$ is either feasible for 
the problem $(\mathcal{P})$ or $G(x_{n_k}) \notin K$ for all $k \in \mathbb{N}$.

If the subsequence $\{ x_{n_k} \}$ is feasible for the problem $(\mathcal{P})$, then $f(x_{n_k}) \ge f_*$ for all 
$k \in \mathbb{N}$ and 
\[
  \liminf_{n \to \infty} f(x_n) = \lim_{k \to \infty} f(x_{n_k}) \ge f_*.
\]
Suppose now that $G(x_{n_k}) \notin K$ for all $k \in \mathbb{N}$. Let $\{ p_k \} \subset Y$ be any sequence such that
$G(x_{n_k}) - p_k \in K$ for all $k \in \mathbb{N}$ and $\| p_k \| \to 0$ as $k \to \infty$. At least one such sequence
exists due to the fact that $\dist(G(x_n), K) \to 0$ as $n \to \infty$. By the definition of the optimal value function
$\beta$ one gets
\[
  \liminf_{n \to \infty} f(x_n) =  \lim_{k \to \infty} f(x_{n_k}) 
  \ge \liminf_{k \to \infty} \beta(p_k) \ge \liminf_{p \to 0} \beta(p).
\]
Combining the two cases together one obtains that the lower estimate in \eqref{eq:ALMethod_ObjFuncLimits} is satisfied.
\end{proof}

Let us now provide new necessary and sufficient conditions for the existence of global saddle points of augmented
Lagrangians for well-posed cone constrained optimization problems. 

\begin{theorem} \label{thrm:AugmLagr_GSP_MainExistenceThrm}
Let problem \eqref{prob:ConeConstrained} be weakly Levitin-Polyak well-posed with respect to the infeasibility measure 
$\varphi(\cdot) = \dist(G(\cdot), K)$ and the optimal value function of this problem be lsc at the origin. Suppose also
that assumptions $(A1)$--$(A5)$ are satisfied. Then a global saddle point of the augmented Lagrangian
$\mathscr{L}(\cdot)$ exists if and only if there exists a multiplier $\lambda_* \in \Lambda$ satisfying the two 
following conditions:
\begin{enumerate}
\item{there exists $c_0 \ge 0$ such that the function $\mathscr{L}(\cdot, \lambda_*, c)$ attains a global minimum on $Q$
for any $c \ge c_0$;}

\item{for any globally optimal solution $x_*$ of problem \eqref{prob:ConeConstrained} the pair $(x_*, \lambda_*)$ is 
a local saddle point of the augmented Lagrangian $\mathscr{L}(\cdot)$ and there exist $0 < c_* < + \infty$ and 
$0 < r_* < + \infty$ such that $c_{loc}^*(x_*, \lambda_*) < c_*$ $r_*(x_*, \lambda_*, c_*) \ge r_*$ for any 
globally optimal solution $x_*$ of problem \eqref{prob:ConeConstrained}.
}
\end{enumerate}
\end{theorem}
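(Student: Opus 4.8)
The plan is to mirror closely the proof of Theorem~\ref{thrm:Exactness_for_LevitinPolyakGeneralizedWP}, using Lemma~\ref{lem:AugmLagr_MinimizingSeqLimits} in place of Lemma~\ref{lem:MinimizingSeq_PenaltyFunc} and Lemmas~\ref{lem:LSP_AugmLagrValue} and \ref{lem:GlobalSP} to pass between saddle points and optimal solutions. For the necessity direction, suppose $(x_*,\lambda_*)$ is a global saddle point. By Lemma~\ref{lem:GlobalSP} the point $x_*$ is globally optimal and for every other globally optimal solution $z_*$ the pair $(z_*,\lambda_*)$ is again a global saddle point with $c^*(z_*,\lambda_*)=c^*(x_*,\lambda_*)$; hence taking $c_*$ slightly larger than this common value and any $r_*>0$ verifies the uniform local saddle point condition (condition~2), since a global saddle point is a fortiori a local one with $U=Q$. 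Condition~1 follows because, by Definition~\ref{def:GlobalSaddlePoint}, $\inf_{x\in Q}\mathscr{L}(x,\lambda_*,c)=\mathscr{L}(x_*,\lambda_*,c)<+\infty$ for all $c\ge c^*(x_*,\lambda_*)$ and the infimum is attained at $x_*$ (using Lemma~\ref{lem:LSP_AugmLagrValue} to identify $\mathscr{L}(x_*,\lambda_*,c)=f(x_*)$).

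For sufficiency, fix the multiplier $\lambda_*$ from the hypotheses and choose an increasing unbounded sequence $\{c_n\}\subset(0,+\infty)$ with $c_0\ge 0$ from condition~1. For each $n$ let $x_n$ be a global minimizer of $\mathscr{L}(\cdot,\lambda_*,c_n)$ on $Q$. Lemma~\ref{lem:AugmLagr_MinimizingSeqLimits} applies with $\varepsilon_n\equiv 0$ (after checking $\mathscr{L}(\cdot,\lambda_*,c_0)$ is bounded below, which follows from condition~1), yielding $\dist(G(x_n),K)\to 0$ and the sandwich $\min\{\liminf_{p\to 0}\beta(p),f_*\}\le\liminf f(x_n)\le\limsup f(x_n)\le f_*$. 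Because the optimal value function is lsc at the origin, $\liminf_{p\to 0}\beta(p)\ge\beta(0)=f_*$, so the sandwich forces $f(x_n)\to f_*$. Now $\varphi(x_n)=\dist(G(x_n),K)\to 0$ and $f(x_n)\to f_*$, so weak Levitin--Polyak well-posedness with respect to $\varphi$ gives $\dist(x_n,\argmin)\to 0$. Choose $n_0$ so that $\dist(x_n,\argmin)<r_*$ and $c_n\ge c_*$ for all $n\ge n_0$.

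Fix $n\ge n_0$ and pick a globally optimal $x^\sharp$ with $\|x_n-x^\sharp\|<r_*\le r_*(x^\sharp,\lambda_*,c_*)\le r_*(x^\sharp,\lambda_*,c_n)$ (monotonicity of the saddle-point radius in $c$, which follows from assumption~(A4) exactly as for the radius of local exactness) and $c_n>c_*>c_{loc}^*(x^\sharp,\lambda_*)$. Then the local saddle point inequality at $x^\sharp$, together with Lemma~\ref{lem:LSP_AugmLagrValue} giving $\mathscr{L}(x^\sharp,\lambda_*,c_n)=f(x^\sharp)=f_*$, yields $\inf_{x\in Q}\mathscr{L}(x,\lambda_*,c_n)=\mathscr{L}(x_n,\lambda_*,c_n)\ge\mathscr{L}(x^\sharp,\lambda_*,c_n)=f_*$. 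Combining with the reverse inequality $\inf_{x\in Q}\mathscr{L}(x,\lambda_*,c_n)\le f_*$ (from (A1) and feasibility) shows $\inf_{x\in Q}\mathscr{L}(x,\lambda_*,c_n)=f_*$ for all large $n$, hence for all $c$ beyond some threshold. It remains to upgrade this to a genuine global saddle point: for $c$ large one has $\sup_{\lambda\in\Lambda}\mathscr{L}(x^\sharp,\lambda,c)\le f(x^\sharp)$ by (A1) and $\ge f(x^\sharp)$ by the local saddle point property and (A2), so $(x^\sharp,\lambda_*)$ satisfies Definition~\ref{def:GlobalSaddlePoint}. I expect the main obstacle to be the bookkeeping in this last step: carefully verifying that the same multiplier $\lambda_*$ and a uniform threshold $c^*$ work simultaneously for the left inequality $\sup_\lambda\mathscr{L}(x^\sharp,\lambda,c)\le\mathscr{L}(x^\sharp,\lambda_*,c)$ (which is local-saddle-point information at each $x^\sharp$) and the right inequality $\mathscr{L}(x^\sharp,\lambda_*,c)\le\inf_{x\in Q}\mathscr{L}(x,\lambda_*,c)$ (which is the global information just derived), invoking Lemma~\ref{lem:GlobalSP} to ensure the least penalty parameter is the same across all globally optimal solutions.
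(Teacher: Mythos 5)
Your proposal is correct and follows essentially the same route as the paper's proof: necessity via Lemma~\ref{lem:GlobalSP} (all global minimizers share the same least exact penalty parameter, so any $c_*$ above it and any $r_*>0$ work), and sufficiency by taking exact minimizers $x_n$ of $\mathscr{L}(\cdot,\lambda_*,c_n)$, applying Lemma~\ref{lem:AugmLagr_MinimizingSeqLimits} together with the lower semicontinuity of $\beta$ to get $f(x_n)\to f_*$ and $\dist(G(x_n),K)\to 0$, invoking weak Levitin--Polyak well-posedness to place $x_n$ within $r_*$ of some globally optimal $x^\sharp$, and then combining the uniform local saddle point inequality with Lemma~\ref{lem:LSP_AugmLagrValue} and $(A4)$ to conclude that $(x^\sharp,\lambda_*)$ is a global saddle point. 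The final ``bookkeeping'' step you flag is handled in the paper exactly as you anticipate: the left inequality comes directly from the local saddle point definition for all $c\ge c_*$, and the right one from the derived chain $f(x^\sharp)=\mathscr{L}(x^\sharp,\lambda_*,c_n)\le\inf_{x\in Q}\mathscr{L}(x,\lambda_*,c_n)\le\inf_{x\in Q}\mathscr{L}(x,\lambda_*,c)$ for $c\ge c_n$.
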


\begin{proof}
Suppose that there exists a global saddle point of the augmented Lagrangian $\mathscr{L}(\cdot)$. Denote it by 
$(x_*, \lambda_*)$. Then by Definition~\ref{def:GlobalSaddlePoint} for any $c > c^*(x_*, \lambda_*)$ the function
$\mathscr{L}(\cdot, \lambda_*, c)$ attains a global minimum on $Q$ at the point $x_*$, that is, the first assumption 
of the theorem is satisfied. Furthermore, by Lemma~\ref{lem:GlobalSP} for any globally optimal solution $z_*$ of
problem~\eqref{prob:ConeConstrained} the pair $(z_*, \lambda_*)$ is also a global saddle point of the augmented
Lagrangian and $c^*(x_*, \lambda_*) = c^*(z_*, \lambda_*)$. Therefore, the second assumption of the theorem is satisfied
for any $c_* > c^*(x_*, \lambda_*)$ and any $r_* > 0$.

Let us prove the converse statement. Suppose that the two assumptions of the theorem are valid. Choose any unboundedly
increasing sequence $\{ c_n \} \subset (0, + \infty)$ with $c_0$ being from the first assumption. By this assumption 
the function $\mathscr{L}(\cdot, \lambda_*, c_n)$ attains a global minimum on $Q$ at some point $x_n \in Q$. By
Lemma~\ref{lem:AugmLagr_MinimizingSeqLimits} one has $\dist(G(x_n), K) \to 0$ and $f(x_n) \to f_*$ as $n \to \infty$ 
due to the fact that the optimal value function $\beta$ is lsc at the origin by the assumption of the theorem.

Since by our assumption problem \eqref{prob:ConeConstrained} is weakly Levitin-Polyak well-posed with respect to the
infeasibility measure $\varphi(\cdot) = \dist(G(\cdot), K)$, one can conclude that $\dist(x_n, \Omega_*) \to 0$ as 
$n \to \infty$, where $\Omega_*$ is the set of globally optimal solutions of problem \eqref{prob:ConeConstrained}.
Therefore, there exists $n_0 \in \mathbb{N}$ such that
\begin{equation} \label{eq:AugmLagrMinimizersConvergToOptSet}
  \dist(x_n, \Omega_*) < r_*, \quad c_n \ge c_*, \quad \forall n \ge n_0,
\end{equation}
where $c_*$ and $r_*$ are from the second assumption of the theorem.

Fix any $n \ge n_0$. From \eqref{eq:AugmLagrMinimizersConvergToOptSet} it follows that there exists $x_* \in \Omega_*$
such that $\| x_n - x_* \| < r_*$ and, therefore, by the second assumption of the theorem and the definition of $x_n$
one has
\[
  \mathscr{L}(x_*, \lambda_*, c_n) \le \inf_{x \in B(x_*, r_*) \cap Q} \mathscr{L}(x, \lambda_*, c_n) 
  \le \mathscr{L}(x_n, \lambda_*, c_n) = \inf_{x \in Q} \mathscr{L}(x, \lambda_*, c_n).  
\]
With the use of Lemma~\ref{lem:LSP_AugmLagrValue} and assumption $(A4)$ one gets
\[
  \mathscr{L}(x_*, \lambda_*, c) = f(x_*) = \mathscr{L}(x_*, \lambda_*, c_n) 
  \le \inf_{x \in Q} \mathscr{L}(x, \lambda_*, c_n) \le \inf_{x \in Q} \mathscr{L}(x, \lambda_*, c)
\]
for all $c \ge c_n$. Moreover, from \eqref{eq:AugmLagrMinimizersConvergToOptSet}, the second assumption of the theorem
and the definition of local saddle point one has
\[
  \sup_{\lambda \in \Lambda} \mathscr{L}(x_*, \lambda, c) \le \mathscr{L}(x_*, \lambda_*, c) 
  \quad \forall c \ge c_n. 
\]
Thus, for any $c \ge c_n$ one has
\[
  \sup_{\lambda \in \Lambda} \mathscr{L}(x_*, \lambda, c) \le \mathscr{L}(x_*, \lambda_*, c)
  \le \inf_{x \in Q} \mathscr{L}(x, \lambda_*, c),
\]
which means that $(x_*, \lambda_*)$ is a global saddle point of the augmented Lagrangian with 
$c^*(x_*, \lambda_*) \le c_n$.
\end{proof}

\begin{corollary}[localization principle for global saddle points] \label{crlr:GSP_LocPrinciple}
Let problem \eqref{prob:ConeConstrained} be Levitin-Polyak well-posed with respect to the infeasibility measure 
$\varphi(\cdot) = \dist(G(\cdot), K)$ and the optimal value function of this problem be lsc at the origin. Suppose also
that assumptions $(A1)$--$(A5)$ are satisfied. Then a global saddle point of the augmented Lagrangian
$\mathscr{L}(\cdot)$ exists if and only if there exists a multiplier $\lambda_* \in \Lambda$ satisfying the following
two conditions:
\begin{enumerate}
\item{there exists $c_0 \ge 0$ such that the function $\mathscr{L}(\cdot, \lambda_*, c)$ attains a global minimum on $Q$
for any $c \ge c_0$;}

\item{for any globally optimal solution $x_*$ of problem \eqref{prob:ConeConstrained} the pair $(x_*, \lambda_*)$ is a
local saddle point of the augmented Lagrangian $\mathscr{L}(\cdot)$.
}
\end{enumerate}
\end{corollary}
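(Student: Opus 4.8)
The plan is to deduce the corollary from Theorem~\ref{thrm:AugmLagr_GSP_MainExistenceThrm}, exactly as the penalty-function counterpart (the corollary following Theorem~\ref{thrm:Exactness_for_LevitinPolyakGeneralizedWP}) was deduced from that theorem. The only two places where the hypotheses of the corollary are formally weaker than those of Theorem~\ref{thrm:AugmLagr_GSP_MainExistenceThrm} are: (i) the corollary assumes Levitin-Polyak well-posedness with respect to $\varphi(\cdot) = \dist(G(\cdot), K)$ rather than its weak version; and (ii) in the corollary's second condition the pair $(x_*, \lambda_*)$ is only required to be a local saddle point for each globally optimal $x_*$, without the uniform bounds $c_{loc}^*(x_*, \lambda_*) < c_*$ and $r_*(x_*, \lambda_*, c_*) \ge r_*$. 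By Proposition~\ref{prp:GenLevitinPolyakWellPosedness_2Versions}, (i) simultaneously yields the weak Levitin-Polyak well-posedness required by the theorem \emph{and} the compactness of the set $\Omega_*$ of globally optimal solutions; the latter is precisely the ingredient needed to close the gap in (ii). All remaining hypotheses of Theorem~\ref{thrm:AugmLagr_GSP_MainExistenceThrm} --- lower semicontinuity of the optimal value function at the origin, assumptions $(A1)$--$(A5)$, and the first condition on attainment of a global minimum of $\mathscr{L}(\cdot, \lambda_*, c)$ --- coincide verbatim with hypotheses of the corollary.

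The core of the argument is therefore a compactness step. Fix the multiplier $\lambda_*$ from the corollary's second condition. For each $x_* \in \Omega_*$, since $(x_*, \lambda_*)$ is a local saddle point, there are $\tilde{c}(x_*) := c_{loc}^*(x_*, \lambda_*) + 1 < +\infty$ and $\tilde{r}(x_*) > 0$ such that the inequalities \eqref{eq:LocalSaddlePoint} hold with $U = B(x_*, \tilde{r}(x_*))$ for all $c \ge \tilde{c}(x_*)$; by Lemma~\ref{lem:LSP_AugmLagrValue} one has $\mathscr{L}(x_*, \lambda_*, c) = f(x_*) = f_*$, so in fact $\mathscr{L}(x, \lambda_*, c) \ge f_*$ for all $x \in B(x_*, \tilde{r}(x_*)) \cap Q$ and all $c \ge \tilde{c}(x_*)$. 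The open balls $\{\, y \in X \mid d(x_*, y) < \tilde{r}(x_*)/2 \,\}$, $x_* \in \Omega_*$, cover the compact set $\Omega_*$; extract a finite subcover corresponding to points $x_1, \dots, x_N$, and set $c_* := 1 + \max_{j} \tilde{c}(x_j)$ and $r_* := \tfrac12 \min_{j} \tilde{r}(x_j)$, both finite and positive. Given an arbitrary $x_* \in \Omega_*$, it lies in the half-ball around some $x_i$, and the triangle inequality gives $B(x_*, r_*) \subseteq B(x_i, \tilde{r}(x_i))$; hence $\inf_{x \in B(x_*, r_*) \cap Q} \mathscr{L}(x, \lambda_*, c) \ge f_*$ for all $c \ge \max_j \tilde{c}(x_j)$. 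Using $(A1)$--$(A2)$ (equivalently, the reasoning behind Lemma~\ref{lem:LSP_AugmLagrValue}) one checks that $\sup_{\lambda \in \Lambda} \mathscr{L}(x_*, \lambda, c) = f(x_*) = f_*$ and hence, since $\mathscr{L}(x_*, \lambda_*, c)$ is squeezed between $f(x_*)$ and the above infimum, $\mathscr{L}(x_*, \lambda_*, c) = f_*$ for such $c$. Thus \eqref{eq:LocalSaddlePoint} holds at $(x_*, \lambda_*)$ with $U = B(x_*, r_*)$ and $c^* = \max_j \tilde{c}(x_j)$, which gives $c_{loc}^*(x_*, \lambda_*) \le \max_j \tilde{c}(x_j) < c_*$ and $r_*(x_*, \lambda_*, c_*) \ge r_*$ for every $x_* \in \Omega_*$ --- exactly the second condition of Theorem~\ref{thrm:AugmLagr_GSP_MainExistenceThrm}. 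The theorem then produces a global saddle point, which is the ``if'' direction. The ``only if'' direction is immediate: by Lemma~\ref{lem:GlobalSP} the existence of a global saddle point $(x_*, \lambda_*)$ makes $(z_*, \lambda_*)$ a (global, hence local) saddle point at every globally optimal $z_*$, and makes $\mathscr{L}(\cdot, \lambda_*, c)$ attain its global minimum on $Q$ at $x_*$ for all large $c$.

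The step I expect to require the most care is the verification, at a point $x_* \in \Omega_*$ that is merely known to lie in the half-ball around some $x_i$, that the saddle inequalities genuinely hold with the common parameters $c_*$ and $r_*$. The lower inequality follows from the inclusion $B(x_*, r_*) \subseteq B(x_i, \tilde{r}(x_i))$ together with the fact that all globally optimal solutions share the value $f_*$; but the upper inequality $\sup_{\lambda} \mathscr{L}(x_*, \lambda, c) \le \mathscr{L}(x_*, \lambda_*, c)$ cannot be inherited from the local saddle point property at $x_i$ and must be recovered from the axioms: one uses that $\sup_\lambda \mathscr{L}(x_*, \lambda, c) = f(x_*)$ for feasible $x_*$ by $(A1)$--$(A2)$, and that $\mathscr{L}(x_*, \lambda_*, c)$ is pinched between $f(x_*)$ and the infimum over $B(x_*, r_*) \cap Q$. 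Once this is in place the argument closes, and everything else is a routine transcription of the corresponding finite-dimensional localization-principle argument.
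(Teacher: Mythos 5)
Your proof is correct, but it takes a genuinely different route from the paper's. You reduce the corollary to Theorem~\ref{thrm:AugmLagr_GSP_MainExistenceThrm} by upgrading the pointwise local-saddle-point hypothesis to the uniform one required there: Proposition~\ref{prp:GenLevitinPolyakWellPosedness_2Versions} gives compactness of the set of globally optimal solutions, a finite subcover by half-balls yields common constants $c_*$ and $r_*$, and your pinching argument via $(A1)$--$(A2)$ together with Lemma~\ref{lem:LSP_AugmLagrValue} (applied at the centres $x_j$, not at the arbitrary $x_*$, which is the right way to avoid the issue that $c_{loc}^*(x_*,\lambda_*)$ could a priori exceed $\max_j \tilde{c}(x_j)$) recovers the saddle inequalities at every globally optimal $x_*$ with these common constants. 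This is sound, and it is essentially the compactness argument that the paper asserts without detail in the proof of the penalty-function analogue (the corollary following Theorem~\ref{thrm:Exactness_for_LevitinPolyakGeneralizedWP}). The paper's own proof of Corollary~\ref{crlr:GSP_LocPrinciple}, by contrast, does not go through the uniform condition at all: it reruns the first half of the proof of Theorem~\ref{thrm:AugmLagr_GSP_MainExistenceThrm} to produce global minimizers $x_n$ of $\mathscr{L}(\cdot,\lambda_*,c_n)$ with $\dist(G(x_n),K) \to 0$ and $f(x_n) \to f_*$, uses the (non-weak) Levitin--Polyak well-posedness to extract a subsequence converging to a single point $x_* \in \argmin(\mathcal{P})$, and then verifies the global saddle point inequalities at that one point directly from its local saddle point property, Lemma~\ref{lem:LSP_AugmLagrValue}, and $(A4)$. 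The paper's route is shorter and only needs the local saddle point property at the one limit point; your route is a cleaner formal specialization of the theorem and has the side benefit of making explicit the covering argument that the paper leaves to the reader elsewhere.
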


\begin{proof}
The necessity of the two conditions from the formulation of the corollary for the existence of a global saddle point
follows directly from Theorem~\ref{thrm:AugmLagr_GSP_MainExistenceThrm}. To prove the sufficiency of these conditions,
suppose that sequences $\{ c_n \} \subset (0, + \infty)$ and $\{ x_n \} \subset Q$ are the same as in the proof of
Theorem~\ref{thrm:AugmLagr_GSP_MainExistenceThrm}. Then by Lemma~\ref{lem:AugmLagr_MinimizingSeqLimits} one has
$\dist(G(x_n), K) \to 0$ and $f(x_n) \to f_*$ as $n \to \infty$. Hence with the use of the fact that problem
\eqref{prob:ConeConstrained} is Levitin-Polyak well-posed with respect to the infeasibility measure 
$\varphi(\cdot) = \dist(G(\cdot), K)$ one obtains that there exists a subsequence $\{ x_{n_k} \}$ converging to some
globally optimal solution $x_*$ of problem \eqref{prob:ConeConstrained}.

By the second condition of the corollary the pair $(x_*, \lambda_*)$ is a local saddle point of $\mathscr{L}(\cdot)$. 
By definition it means that there exist a neighbourhood $U$ of $x_*$ and $c_* > 0$ such that
\[
  \sup_{\lambda \in \Lambda} \mathscr{L}(x_*, \lambda, c) \le \mathscr{L}(x_*, \lambda_*, c) \le 
  \inf_{x \in Q \cap U} \mathscr{L}(x, \lambda_*, c) \quad \forall c \ge c_*,
\]
while by Lemma~\ref{lem:LSP_AugmLagrValue} one has $\mathscr{L}(x_*, \lambda_*, c) = f(x_*)$ for all $c \ge c_*$. 
Since the subsequence $\{ x_{n_k} \}$ converges to $x_*$ and the sequence $\{ c_n \}$ increases unboundedly, there
exists $k \in \mathbb{N}$ such that $x_{n_k} \in U$ and $c_{n_k} \ge c_*$. For any such $k$ one has
\[
  f(x_*) = \mathscr{L}(x_*, \lambda_*, c_{n_k}) \le \mathscr{L}(x_{n_k}, \lambda_*, c_{n_k}) 
  = \inf_{x \in Q} \mathscr{L}(x, \lambda_*, c_{n_k})
\]
Hence bearing in mind assumption $(A4)$ one can conclude that
\[
  \sup_{\lambda \in \Lambda} \mathscr{L}(x_*, \lambda, c) \le \mathscr{L}(x_*, \lambda_*, c)
  = f(x_*) \le \inf_{x \in Q} \mathscr{L}(x, \lambda_*, c_{n_k})
  \le \inf_{x \in Q} \mathscr{L}(x, \lambda_*, c)
\]
for any $c \ge c_{n_k}$, which means that $(x_*, \lambda_*)$ is a global saddle point of $\mathscr{L}(\cdot)$.
\end{proof}

\begin{remark}
It should be noted that under very mild assumptions on the function $\Phi(y, \lambda, c)$ the lower semicontinuity 
of the optimal value function $\beta$ is, in fact, equivalent to the absence of duality gap between problem 
\eqref{prob:ConeConstrained} and its augmented dual problem
\[
  \max_{\lambda, c} \Theta(\lambda, c) \quad \text{subject to } \lambda \in \Lambda, \: c > 0,
\]
where $\Theta(\lambda, c) = \inf_{x \in Q} \mathscr{L}(x, \lambda, c)$ is the augmented dual function 
(see \cite{Dolgopolik_AugmLagrMethods}). One can also show that the absence of duality gap is a necessary conditions
for the existence of a global saddle point \cite{Dolgopolik_AugmLagrTheory,Dolgopolik_AugmLagrMethods}. Therefore, under
some additional assumptions the lower semicontinuity of the optimal value function $\beta$ is, in fact, a necessary
condition for the existence of a global saddle point of the augmented Lagrangian $\mathscr{L}(\cdot)$.
\end{remark}

\begin{remark}
Theorem~\ref{thrm:AugmLagr_GSP_MainExistenceThrm} can be viewed as an extension of
Theorem~\ref{thrm:Exactness_for_LevitinPolyakGeneralizedWP} on global exactness of penalty function to the case of
global saddle points of augmented Lagrangians It should be noted that all other results on exactness of penalty function
from Section~\ref{sect:ExactPenaltyFunctions} (e.g. Theorems~\ref{thrm:LocPrinciple_ExPen_ReflCase} and
\ref{thrm:Exactness_for_LevitinPolyakExtendedWP}) can be extended to the case of global saddle points of augmented
Lagrangians as well. We do not present such straightforward extensions here for the sake of shortness.
\end{remark}

\begin{remark}
Let us comment on a counterexample to the localization principle for global saddle points of augmented Lagrangians in
the infinite dimensional case given in \cite[Example~4]{Dolgopolik_AugmLagrMult}. One can readily verify that 
the optimization problem from this example is not even Tykhonov well-posed in the extended sense, but all other
assumptions of Corollary~\ref{crlr:GSP_LocPrinciple} are satisfied. Thus, \cite[Example~4]{Dolgopolik_AugmLagrMult}
shows that Corollary~\ref{crlr:GSP_LocPrinciple} does not holds true for ill-posed problems in the general case, even if
all other assumptions of this corollary are satisfied.
\end{remark}

\bibliographystyle{abbrv}  
\bibliography{GlobalSaddlePoints_bibl}

\end{document}